\newcommand{\R}{\mathbb{R}}
\newcommand{\N}{\mathbb{N}}
\newcommand{\SL}{\mathcal{L}}
\newcommand{\IP}[2]{\left<#1,#2\right>}
\newcommand{\vn}[1]{\lVert#1\rVert}
\newcommand{\rd}[2]{\frac{d#1}{d#2}}
\newcommand{\Ko}{K_{\text{$\mspace{-1mu}o\mspace{-1mu}s\mspace{-1mu}c$}}}
\newcommand{\kav}{\overline{k}}
\newcommand{\gammaiav}{\overline{\gamma_i}}
\newtheorem{thm}{Theorem}[section]
\newtheorem*{thm*}{Theorem}
\newtheorem*{QN}{Question}
\newtheorem*{conj}{Conjecture}
\newtheorem*{prop*}{Proposition}
\newtheorem*{cor*}{Corollary}
\newtheorem*{lem*}{Lemma}
\newtheorem{prop}[thm]{Proposition}
\newtheorem{lem}[thm]{Lemma}
\newtheorem{cor}[thm]{Corollary}
\theoremstyle{remark}
\newtheorem{rmk}{Remark}
\begin{document}

\title{Curve diffusion and straightening flows on parallel lines}

\author{Glen Wheeler$^*$ and Valentina-Mira Wheeler}
\thanks{*: Corresponding author.}
\address{Glen Wheeler\\
           Institute for Mathematics and its Applications \\
           University of Wollongong\\
           Northfields Avenue\\
           Wollongong, NSW, 2522, Australia\\
           email: glenw@uow.edu.au }
\address{ Valentina-Mira Wheeler \\
           Institute for Mathematics and its Applications \\
           University of Wollongong\\
           Northfields Avenue\\
           Wollongong, NSW, 2522, Australia\\
           email: vwheeler@uow.edu.au
           }

\subjclass[2000]{53C44 \and 58J35}

\begin{abstract}
In this paper, we study families of immersed curves
$\gamma:(-1,1)\times[0,T)\rightarrow\R^2$ with free boundary supported on
parallel lines $\{\eta_1, \eta_2\}:\R\rightarrow\R^2$ evolving by the curve
diffusion flow and the curve straightening flow.  The evolving curves are
orthogonal to the boundary and satisfy a no-flux condition.  We give estimates
and monotonicity on the normalised oscillation of curvature,
yielding global results for the flows.
\end{abstract}
\maketitle

\section{Introduction}

Fourth-order extrinsic curvature flow have recently enjoyed considerable attention in the literature.
Two model flows are the surface diffusion flow, where points move with velocity $\Delta^\perp \vec{H}$, and
the Willmore flow, where points move with velocity $\Delta^\perp \vec{H} + \vec{H}|A^o|^2$.
These curvature flow are one-parameter families of surfaces immersed in $\R^3$ via immersions
$f:\Sigma\times[0,T)\rightarrow\R^3$, with $\vec{H}$ the mean curvature vector, $\Delta^\perp$ the Laplacian
on the normal bundle along $f$, and $A^o$ the tracefree second fundamental form.

Surface diffusion flow, proposed by Mullins \cite{Mullins} in 1956, arises as a model for several phenomena
\cite{CTNc96,CT94}.
As such it has received and continues to receive intense attention from the applied mathematics community.
Global analysis for the surface diffusion flow is restricted at the moment to special situations, and although
the theory of singularities for the flow has received some attention \cite{W09,W10} it is far from
well-understood.
The surface diffusion flow is variational, being in a sense the $H^{-1}$-gradient flow for the area functional.
The Willmore flow is also variational, being the steepest descent $L^2$-gradient flow for the Willmore
functional.
The Willmore functional is, up to normalisation, the integral of the mean curvature $\vec{H}$ squared.
A prototypical bending energy, it has been argued that the Willmore functional was considered first by Sophie
Germain in the early 19th century.
The Willmore functional drew significant interest from Blaschke \cite{blaschke3,blaschke2,blaschke1} and his
school, including Thomsen and Schadow, who first presented the Euler-Lagrange operator.
Their interest in the Willmore functional stems from its invariance under the M\"obius group of $\R^3$ (so
long as inversions are not centred on the surface, see \cite{BK,bernardQ,Chen1974,kusner89} for example for a
precise formula).
This invariance lies at the heart of many of its applications, both to physics and back to mathematics itself,
for example in embedding problems.
The appeal of the functional is so universal that the Willmore conjecture \cite{Wconj}, asserting that the
global minimiser among surfaces in $\R^3$ with genus one is achieved by the Clifford torus (and closed
conformal images thereof), generated significant attention (a selection is \cite{chen1970,LY82,ros99,S93}),
before being recently solved in a brealthrough work \cite{MN14}.
The Willmore flow was first studied by Kuwert and Sch\"atzle \cite{KS01,KS02,KS04} who set up a general
framework that is by now a standard methodology used to understand large varieties of higher-order curvature
flow.
Applications and modifications of this framework exist for the surface diffusion flow \cite{W10,W13}, the
geometric triharmonic heat flow \cite{WMP15}, Chen's flow \cite{BWW,CWW} and polyharmonic flows \cite{WP15,PW19}.

Although in some special cases maximum-principle style results hold, more typical is a kind of `almost'
maximum principle, and an `eventual' positivity, see \cite{daners2015,FGG08,GG08,GG09} for the parabolic and
\cite{grunaubook} and for the elliptic settings respectively).
Many of the tools and techniques used in the analysis of second-order curvature flow can not be applied to the
study of fourth and higher-order curvature flow.
In addition to the development of new techniques, it is a natural focus of research effort to determine the
extent to which modifications of known techniques apply to various fourth-order curvature flow in different
scenarios.
This is where the present paper fits into the picture.
We treat the one-dimensional case for the surface diffusion and Willmore flows
with free boundary, called the \emph{curve diffusion flow} and \emph{free elastic
flow} (or simply the \emph{elastic flow}) respectively.

In order to differentiate easily between these two flows, we label them as follows:
\vspace*{-1mm}
\begin{itemize}
\item[(CD)] Curve diffusion flow
\item[(FE)]  Free elastic flow
\end{itemize}  
The main results, Theorem \ref{TMmainb} for (CD) and Theorem
\ref{TMmainbElastic} for (FE), consider the question of geometric stability,
where closeness to an equilibrium is measured explicitly in terms of a
geometric quantity.
We also present some conjectures and a question on a suitable adaptation of
Proposition 1.5 from \cite{W13}.  This directly addresses for (CD) the question
of preservation of positivity raised above by measuring the total amount of
time during which a global solution may remain not strictly graphical.
The evolving families of planar curves we study are line segments meeting a
pair of parallel lines at right angles (see Figure \ref{Fig1}).

Second-order curvature flow with free boundary have been considered since the
90s \cite{pihan,S94,ST2,ST1} and continues to receive significant research
attention (for a sample of the growing literature, see
\cite{buckland,depner14,koeller12,lambert,marquardt1,marquardt2,mizuno15,Volkmann,vulcanov,WW14,wheelerV,V14,V14r}).
Fourth-order curvature flow with various boundary conditions have received some
recent attention, with work particularly relevant to this paper in
\cite{dall2014a,dall2014,dalllojasiewicz,GIK05,GIK08,lin2012,lin2015,mccoywheelerwusixth,mccoywheelerwuhighorder,novaga2014curve,oelz2011}.
In \cite{GIK05,GIK08} stability results are proved for curves evolving by (CD)
that are graphical and nearby equilibria (with closeness measured in terms of
height and $\vn{k_s}_2^2$) evolving in bounded domains with free boundary.
Although our setting is fundamentally parametric and therefore distinct, our
results here, for the curve diffusion flow, can be thought of as naturally
complementing these.
The evolving curves considered in this paper are supported on straight lines,
so the analogue of `domain' from \cite{GIK05,GIK08} is always unbounded.
We consider immersed curves, with possibly self-intersecting image.
Intersections in the image may result from the curve touching itself, or from
the curve intersecting one of the straight supporting lines.
This allows global results for perturbations of arcs of multiply-covered
circles for instance.
Considering curves supported on parallel lines allows for results on unbounded,
cocompact initial data as well. As the supporting curves are parallel, repeated
reflection produces an entire curve.

Stability for the elastic flow is a classically difficult problem.
The flow (FE) is the steepest descent $L^2$-gradient flow for the
elastic energy:
\[
E(\gamma) = \frac12\int_\gamma k^2\,ds\,,
\]
where $\gamma:[-1,1]\rightarrow\R^2$ is a smooth immersed plane curve, $k$ its
scalar curvature and $ds$ the arc-length element.
This energy is \emph{not} scale-invariant, and can be decreased by enlarging
the curve through homothety.
Circles and curves with constant curvature are not equilibria; they are
expanders.

There exist infinitely many straight line segments that are stationary under
the flow.
It seems difficult to imagine that the flow (FE) without a
constraint would be stable, especially without imposing an additional symmetry
condition, as glued in arcs of circles would still prefer to expand under the
flow.
For the flow of closed curves, trajectories are not bounded.
Indeed, if the distance between the parallel lines $|e|$ is zero, then circles
are permitted and these expand.
By slowly separating the two lines (continuously increasing $|e|$ for
example), it seems intuitive that there would exist non-compact trajectories
for the flow (perhaps by some continuous dependence result).
With this in mind, stability of the straight line under (FE) seems unlikely.
Nevertheless we do manage to prove convergence for (FE) under a curvature condition
without needing to resort to a length constraint or penalisation in the energy.
Our convergence result does not require reparametrisation nor translations to
fix a point.
Therefore it firmly states that \emph{straight lines are stable} for the free
boundary (free) elastic flow.

Let us formally introduce the evolution equations.
Suppose $\gamma:[-1,1]\rightarrow\R^2$, $\eta_i:\R\rightarrow\R^2$ ($i = 1$,
$2$) are regular smooth immersed plane curves such that $\gamma$ meets each $\eta_i$
perpendicularly with zero flux at its endpoints; that is,
\begin{equation}
\label{EQbcs}
\gamma(-1) \in \eta_1(\R)
\,,\quad
\gamma(1) \in \eta_2(\R)
\,,\quad
\IP{\nu}{\nu_{\eta_i}}(\pm1) = 0
\,,\quad
k_s(\pm1) = 0
\,.
\end{equation}
Above we have used $\nu$ to denote a unit normal vector field on $\gamma$, 
the subscript $s$ to denote application of $\partial_s =
\frac{1}{|\gamma_x|}\partial_x$ where $x$ is the variable in the given
parametrisation of $\gamma$, and $k = \IP{\kappa}{\nu} =
\IP{\gamma_{ss}}{\nu}$.
Note that we are not using $s$ here as a true parameter.
We choose $\nu$ by setting $\nu = (\tau_2, -\tau_1)$ where $\tau = \gamma_s$ is
the tangent vector with direction induced by the given parametrisation.
We call $\eta_i$ \emph{supporting curves} for the flow.

The length of $\gamma$ is
\[
L(\gamma) = \int_{-1}^1 |\gamma_x|\, dx\,.
\]
Another important quantity, in addition to the elastic energy $E$ introduced
earlier, is
\begin{equation}
A(\gamma) = -\frac{1}{2}\int_{-1}^1 \IP{\gamma}{\nu}\,|\gamma_x|\,dx\,,
\label{AF}
\end{equation}
which is the usual notion of area for closed plane curves.
Here, $A$ corresponds to the area of the star-shaped domain (with multiplicity)
traced out by rays connecting the position vector $\gamma$ and the origin.

Consider a one-parameter family of immersed curves
$\gamma:[-1,1]\times[0,T)\rightarrow\R^2$ satisfying the boundary conditions
\eqref{EQbcs} and with normal speed given by $-F$, that is
\begin{equation*}
\partial_t\!\gamma = -F\nu\,.
\end{equation*}
The flows are:
\begin{description}
\item[(CD)]
Curve diffusion, where the normal velocity is equal to $-\text{grad}_{H^{-1}(ds)}(L(\gamma))$\footnote{In a sub-Riemannian horizontal graphical sense.}, that is,
\[
F = k_{ss}\,;
\]
\item[(FE)]
Free elastic flow, where the normal velocity equal to $-\text{grad}_{L^{2}(ds)}(E(\gamma))$, that is,
\[
F = k_{ss} + \frac12k^3\,.
\]
\end{description}
The (free) boundary value problem that we wish to consider for these flows is the following:
\begin{equation}
\label{CD}
\tag{CD/FE}
\begin{cases}
(\partial_t\gamma)(x,t) = -(F\nu)(x,t)&\text{ for all }(x,t)\in(-1,1)\times(0,T)\hspace{-1cm}
\\
\gamma(-1,t) \in \eta_1(\R);\quad \gamma(1,t) \in \eta_2(\R)\quad&\text{ for all }t\in\times[0,T)
\\
\IP{\nu}{\nu_{\eta_1}}(-1,t) =
\IP{\nu}{\nu_{\eta_2}}(1,t) = 0\quad&\text{ for all }t\in\times[0,T)
\\
k_s(-1,t) = k_s(1,t) = 0\qquad\qquad\qquad&\text{ for all }t\in\times[0,T)
\,.
\end{cases}
\end{equation}

The curve diffusion flow is in a sense the steepest descent gradient flow for length in $H^{-1}$.
Since the velocity is a potential, signed enclosed area $A$ in the case of
closed curves is constant along the flow.
This shows that the isoperimetric ratio is a scale-invariant monotone quantity
for the flow, and this fact can be useful for analysis of solutions to the flow
(see \cite{W13} for example).
In the case of the boundary problems considered here, this is no longer true.
Here it is difficult to find a useful notion of enclosed area.
Indeed, this is a fundamental obstacle to smooth compactness, and can only be
overcome in the case when the flow is already in its preferred topological
class, that is, when we assume that $\omega = 0$ (see Remark
\ref{RMKfinitetime} and Figure \ref{FigExterior}).

Local existence for \eqref{CD} can be proved by using the standard procedure of
solving the flow in the class of graphs over the initial data, as in
\cite{S94}.
As we consider a Neumann problem, we may use a local adapted coordinate system
similar to Stahl \cite{S94} which does not require a tangential component in
the velocity of the flow.
This can be continued until the solution leaves this class, at which point
there is either some loss of regularity in $C^{4,\alpha}$, or the solution is
simply no longer graphical over its initial state.
The latter problem is a technicality, and can be resolved by writing the flow
in a new coordinate system, as a graph over the solution at a later time.
Now if there are uniform $C^{2,\alpha}$-estimates, it is possible to use a
standard contraction map argument to continue the solution. To the best of our
knowledge the first to observe that only $C^{2,\alpha}$ is required were
Ito-Kohsaka, with the map $\Phi$ constructed in \cite[Proof of Theorem
3.1]{IK01}.
Their proof is written for the flow (CD), however we note that the technique applies also to the flow (FE)
without significant difficulty.
Therefore by iterating the above procedure we find that the maximal time of
existence is either infinity, or the $C^{2,\alpha}$ norm has blown up.
In this paper, the most natural norms to control a-priori are $L^2$ norms of $k$, $k_s$, \ldots, $k_{s^p}$ and so on.
The standard Sobolev inequality allows us
to control the $C^{2,\alpha}$ norm by the length of the position vector
$|\gamma|$ and the $L^2$-norm of the first derivative of curvature.
Note that it is not (without additional arguments) enough to bound only the
length of the evolving curves, pointwise control on the position vector is needed.
The statement below is specialised to our current situation, where the
supporting curves are straight lines.  We note that it is not optimal.

\begin{thm}[Local existence]
\label{TMste}
Let $\eta_i:\R\rightarrow\R^2$ ($i = 1$, $2$) be straight lines.
Suppose $\gamma_0:\R\rightarrow\R^2$ is a regular smooth curve satisfying the
boundary conditions \eqref{EQbcs}.
Then there exists a maximal $T\in(0,\infty]$ and a unique one-parameter family
of regular immersed curves $\gamma:(-1,1)\times[0,T)\rightarrow\R^2$ satisfying
$\gamma(x,0) = \gamma_0(x)$ and \eqref{CD}.
Furthermore, if $T<\infty$, then there does not exist a constant $C$ such that
\begin{equation}
\label{EQextension}
\vn{\gamma}_\infty + \vn{k_s}_2 \le C
\end{equation}
for all $t\in[0,T)$.
\end{thm}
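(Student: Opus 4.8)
The plan is to establish local existence by the graph-reduction scheme sketched in the introduction, and then to prove the continuation criterion by a bootstrapping argument starting from the hypothesised bound \eqref{EQextension}. First I would reduce the geometric problem \eqref{CD} to a scalar quasilinear fourth-order parabolic PDE with Neumann-type boundary conditions by writing the evolving curve as a normal graph over the initial data $\gamma_0$ in a tubular neighbourhood; this is where the straight-line hypothesis on $\eta_i$ is used, since it makes the tubular coordinate system and the reflection of the boundary conditions clean. Local well-posedness for this scalar problem follows from standard linear parabolic theory in parabolic H\"older spaces together with a contraction-mapping argument; the key technical point, attributed to Ito--Kohsaka in the excerpt, is that only a $C^{2,\alpha}$ bound on the solution (equivalently, uniform control of the position, tangent and curvature in H\"older norms) is needed to re-run this argument from any later time, with the extra term $\tfrac12 k^3$ in (E) being lower-order and causing no difficulty. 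Iterating this gives a maximal existence time $T\in(0,\infty]$ and uniqueness in the class of regular immersed curves.

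For the continuation criterion, suppose $T<\infty$ and that \eqref{EQextension} holds, i.e. $\vn{\gamma}_\infty+\vn{k_s}_2\le C$ on $[0,T)$; I must derive a contradiction by upgrading this to a full $C^{2,\alpha}$ bound and applying the iteration above. The first step is to control the length: since $\vn{\gamma}_\infty\le C$ bounds the position vector, and (using the evolution of $L$ from the evolution equations of Lemma \ref{LMevol1}, together with the boundary conditions \eqref{EQbcs}) one gets that $L$ is bounded on $[0,T)$ — for (CD) length is monotone nonincreasing, while for (E) one uses $\vn{\gamma}_\infty\le C$ to bound the extra contribution. The second step is to invoke the Sobolev-type inequality referred to in the excerpt ("the standard Sobolev inequality allows us to control the $C^{2,\alpha}$ norm by the length of the position vector $|\gamma|$ and the $L^2$-norm of the first derivative of curvature"): combined with $L\le C$ and $\vn{k_s}_2\le C$ this yields bounds on $\vn{k}_\infty$ and on $\vn{k}_2$.

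With $\vn{k}_\infty$ and $\vn{k_s}_2$ in hand, the third step is an energy-type argument on $\int k_{ss}^2\,ds$ (and successively higher derivatives $\int k_{s^m}^2\,ds$): differentiating under the flow, integrating by parts using the boundary conditions $k_s(\pm1,t)=0$ and the consequences of orthogonality to the supporting lines, and applying interpolation inequalities of Gagliardo--Nirenberg type on a curve of bounded length, one obtains differential inequalities that, given the already-established bounds, imply uniform bounds on all $\int k_{s^m}^2\,ds$ on $[0,T)$. These translate via Sobolev embedding into uniform $C^{k}$ bounds on the curvature and, together with the position bound and the flow equation (which gives control on $\partial_t\gamma$ and hence equicontinuity in time), into a uniform $C^{2,\alpha}$ bound on $\gamma(\cdot,t)$ up to $t=T$. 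Feeding this into the short-time existence result extends the solution past $T$, contradicting maximality. The main obstacle is the third step: setting up the boundary integrals in the energy estimates so that they vanish or are controlled — the free boundary on the lines forces a specific (a priori unknown) tangential velocity, and one must check carefully, using the no-flux condition $k_s(\pm1)=0$ and the perpendicularity condition, that the boundary terms arising from repeated integration by parts do not obstruct the estimates; this is the place where the parametric free-boundary setting genuinely differs from the closed case.
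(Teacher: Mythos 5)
Your proposal follows essentially the same route as the paper, which in fact only sketches this theorem in the introduction: reduction to a scalar quasilinear problem by writing the flow as a graph over the initial curve following Stahl, a contraction-mapping argument for which only $C^{2,\alpha}$ control is needed (the observation of Ito--Kohsaka), iteration to a maximal time, and upgrading the hypothesised bound \eqref{EQextension} to full $C^{2,\alpha}$ control via the Sobolev inequality and energy estimates on $\int k_{s^m}^2\,ds$ with the boundary terms killed by Lemma \ref{LMbdy}. One small slip worth fixing: for (E) the extra contribution to $L'$ is $\tfrac12\int k^4\,ds$, which is not controlled by $\vn{\gamma}_\infty$ as you claim; instead use that $\int k^2\,ds$ is nonincreasing under (E), which together with $\vn{k_s}_2\le C$ and $L\ge|e|$ bounds $\vn{k}_\infty$ and hence $L'$, so that $L$ remains bounded on any finite time interval.
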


\begin{rmk}
If the flow is not supported on straight lines, then we require compatibility conditions to produce a
solution.
If the compatibility conditions are violated by the initial data, then we are still typically able to produce
a flow, however convergence as $t\searrow0$ will be limited by the degree to which the compatibility
conditions are satisfied.
One interesting investigation into this for the surface diffusion flow is \cite{GA}, where the degree of
incompatibility is finely studied in the context of the original motivation from Mullins \cite{Mullins}.
\end{rmk}


\subsection{Curve diffusion flow}

In light of condition \eqref{EQextension}, global existence follows if we are
able to uniformly bound the position vector $\gamma$ and the $L^2$-norm of
$k_s$.
The curve diffusion flow is the $H^{-1}$ gradient flow of the length
functional, with $L' = -\vn{k_s}_2^2$.
The length is uniformly controlled a-priori but this does not yield immediately
an estimate for $\vn{\gamma}_\infty$.
It does make $\vn{k_s}^2_2$ a natural energy for the flow, with an a-priori
uniform estimate in $L^1([0,T))$ in terms of the length of the initial
data.
Despite this, there are shrinking self-similar solutions to the evolution
equation (see Figure \ref{FigLemniscate}, which relies upon the lemniscate
described in \cite{edwards14}) that are clearly singular in finite-time.
There is a conjecture due to Giga (see \cite{GWconvcdf} for a discussion) that all singularities flow from immersed (and not embedded) data.
For our situation here,
we expect that there exist a greater variety of singularities.

\begin{figure}
\includegraphics[trim=5cm 11cm 5cm 11cm,clip=true,width=6cm,height=4cm]{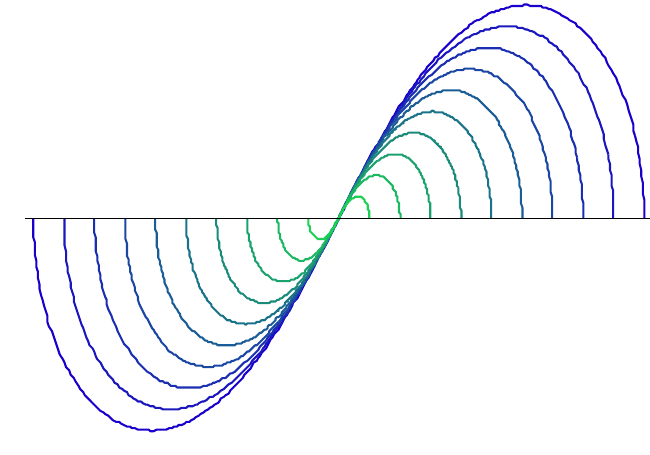}
\caption{The curve diffusion flow with free boundary becoming singular in
finite time. The figure satisfies the no-flux boundary conditions on the black line.
In this case, $|e| = 0$.
The evolution is homothetic.}
\label{FigLemniscate}
\end{figure}

Therefore global existence is not expected to hold generically.
It is natural to hope however that in a suitable neighbourhood of minimisers
for the energy, global existence and convergence to a minimiser holds.
The only global minimisers are straight lines perpendicular to
the supporting lines.
Our main theorem confirms that these equilibria are stable, with neighbourhood
given by the oscillation of curvature.

First let us define the following:
\begin{itemize}
\item Set $e$ to be any vector in the plane perpendicular to each of the
parallel lines $\eta_i$ with length equal to the distance between $\eta_1$ and $\eta_2$.
When we refer to a \emph{horizontal} line segment, we mean a line segment that
is parallel to $e$.
\item The constant $\omega$ and the average of the curvature scalar are defined by
\[
\int_{\gamma} k\,ds\bigg|_{t=0} = 2\omega\pi\,,
\]
\[
\kav (\gamma)= \frac{1}{L}\int_{\gamma} k\, ds\,.
\]
Note that $\omega$ is not typically an integer (see Lemma \ref{WN}).

\item The oscillation of curvature and the isoperimetric ratio are defined as
\[
\Ko (\gamma) = L\int_{\gamma} \big(k-\kav\big)^2 ds\,,
\]
and
\[
I(\gamma) = \frac{L^2(\gamma)}{4\omega\pi A(\gamma)}\,.
\]
\end{itemize}

\begin{thm}
\label{TMmainb}
Suppose $|e|>0$.
Let $\gamma:(-1,1)\times[0,T)\rightarrow\R^2$ be a solution to (CD).
Suppose $\gamma_0$ satisfies
\begin{equation}
\label{EQhyp}
L(0)\vn{k}_2^2(0) < \frac{2\pi}{9}
\,.
\end{equation}
Then $\omega = 0$, the flow exists globally $T=\infty$, and $\gamma(\cdot,t)$
converges exponentially fast to a straight line segment parallel to $e$ in the
$C^\infty$ topology.
\end{thm}

\begin{rmk}
\label{RMKomegazero}
The hypothesis of Theorem \ref{TMmainb} implies that $\omega = 0$.
To see this, we calculate at initial time
\[
(2\omega\pi)^2 = \bigg(\int_\gamma k\,ds\bigg)^2 \le L\int_\gamma k^2\,ds
 < \frac{2\pi}{9}
\]
so
\begin{equation}
\label{EQwindest}
\omega^2 < \frac{2\pi}{9}\frac{1}{4\pi^2} < \frac14\,.
\end{equation}
Now, the boundary condition implies that the total
accumulated angle $\theta(x)$ that the tangent vector makes with $e$ at $x=-1$
is either $0$ or $\pi$, and at $x=1$ it is an integer multiple of $\pi$.
So, we may assume $\theta(-1) = n\pi$ and $\theta(1) = N\pi$ where $n,N$ are integers.
Then the fact that $k = \theta_s$ implies 
\begin{equation}
\label{EQwinding}
\omega = \frac{1}{2\pi}\int_\gamma \theta_s\,ds = \frac{1}{2\pi}\Big( \theta(1) - \theta(-1) \Big) = \frac12(N-n)
\,.
\end{equation}
So we see that $\omega$ is a multiple of $\frac12$.
The estimate \eqref{EQwindest} implies that it must be zero.
As $\omega$ is constant along the flow (see Lemma \ref{WN}), it
remains zero for all time. 
\end{rmk}

\begin{rmk}
Identifying which straight line segment the solution converges to is a
difficult open problem, similar to the problem of identifying the location of
the final point that an embedded planar curve shortening flow approaches (see
\cite{bryantgriffiths}).
\end{rmk}

\begin{figure}
\begin{tabular}{ccc}
 &
\includegraphics[trim=5cm 10cm 3cm 9cm,clip=true,width=5cm]{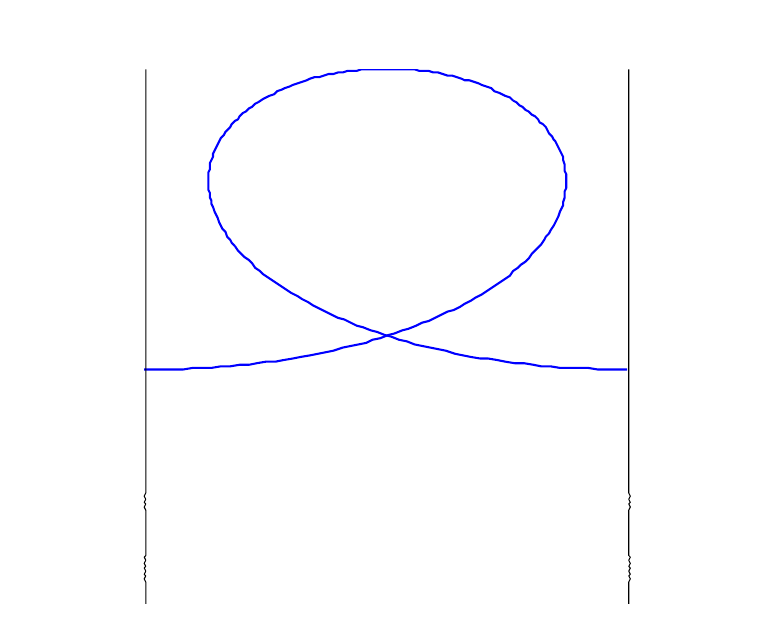}
 &
\\
 &
&
\end{tabular}
\caption{Sample initial data. This initial data has winding number 1.}
\label{Fig1}
\end{figure}

For closed curves, if the oscillation of curvature is initially small, then the
flow exists for all time and converges exponentially fast to a standard circle.
This is the main result of \cite{W13} (see also \cite{GWconvcdf} and an
extension to this in \cite{miuraokabe}). Also in \cite{W13} is an estimate of
the \emph{waiting time}: as the limit is a circle and convergence is smooth,
there exists a $T^*$ such that $k > 0$ for all $t > T^*$, that is, the flow is
eventually convex.
This is interesting in light of \cite{GI}, that shows convexity is in general
lost under the flow.

This is a symptom of the failure of the maximum principle for fourth-order
differential operators.
Another such symptom was identified by Elliott and Maier-Paape \cite{EM01},
that graphicality is typically lost in finite time.
In our situation here, a natural `graph direction' exists: the rotation of $e$ by $\frac\pi2$.
Let us denote this rotated vector by $e^\perp$.
Indeed, analogously to the situation in \cite{W13}, there exists a waiting time
$T^*$ such that for all $t>T^*$, we have
\[
f[\gamma](x,t) := \IP{\nu(x)}{e^\perp} > 0\,,\quad\text{ for all $x\in(-1,1)$}\,.
\]
That is, the flow is eventually graphical.
This leads us to the natural question:

\begin{QN}
Suppose $|e|>0$.
Let $\gamma:(-1,1)\times[0,\infty)\rightarrow\R^2$ be a solution to (CD)
satisfying the assumptions of Theorem \ref{TMmainb}.
Does there exist a $C = C(\gamma(\cdot,0))$ depending only on the initial data such that
\[
\SL\big\{ t\in[0,\infty) : f[\gamma](\cdot,t) \not> 0\big\}
 \le C(\gamma(\cdot,0))
\]
and for every $\varepsilon > 0$ there exists a flow $\gamma_\varepsilon$ such that
\[
\SL\big\{ t\in[0,\infty) : f[\gamma](\cdot,t) \not> 0\big\}
 > C(\gamma_\varepsilon(\cdot,0))-\varepsilon\,?
\]
\end{QN}

In the above we have used $\SL$ to denote Lebesgue measure.

\begin{rmk}
\label{RMKfinitetime}
Finite-time singularities for the curve diffusion flow with closed data remain
difficult to penetrate.  Although there are natural Lyapunov functionals for
the flow, these do not seem to yield classification results for blowups of
singularities.  Indeed, it is still unknown if solutions in symmetric
perturbation classes near non-trivial shrinkers (such as the lemniscate of Bernoulli
discussed in \cite{edwards14}) converge modulo rescaling to the shrinker.
We note that a partial result in this direction is \cite{miuraokabe}.
As mentioned, we can also understand this self-similar solution in the free
boundary setting (see Figure \ref{FigLemniscate}).
It seems likely that the free boundary setting will be useful when studying
perturbations of the lemniscate.

In the free boundary setting, finite-time singularities are more common, and
global analysis of the flow can be quite problematic even in a small data
regime.
For example, the solution space of the exterior problem, where the flow is supported on parallel
lines but with winding number $\omega \ne 0$ (see Figure \ref{FigExterior}), contains no curves with constant curvature.
There is no equilibrium that satisfyies the boundary conditions.
Nevertheless, by adjusting the aperture width $|e|$, it is simple to see that
one may make the oscillation of curvature arbitrarily small.

There is an interesting technical point here.
Some of the estimates used to prove Theorem \ref{TMmainb} are close to optimal:
using initial smallness of the oscillation of curvature, we may use the method
of proof from Lemma \ref{KoESTcaseb} to find that curvature is
well-controlled in $L^2$ if we can control the length difference $L(t) -
L(0)$.
If the supporting lines are skew, this follows by using an isoperimetric-type
argument.  For parallel lines this doesn't work.
If $\omega = 0$ then the problematic term is absent, however for $\omega\ne0$,
the term needs to be estimated.  An easy condition controlling this term is
that $L(0) = |e|+\delta$, where $|e|$ is the length of the straight line
connecting each of the parallel lines.
If it were possible to choose $\delta < K_0$, where $K_0$ is larger than the
initial oscillation of curvature and smaller than $K^*$ from Lemma \ref{KoESTcaseb},
then a stability result would follow.
These requirements are in competition with one another: although the
oscillation of curvature is scale-invariant, decreasing $\delta$ beyond a
certain critical level necessitates an increase in the oscillation of
curvature.  Indeed, the fact that there is no equilibrium in the class of
curves satisfying the boundary conditions for $\omega \ne 0$ proves that it is
not possible to make this choice.  As a corollary of this, we conclude the
following lower bound for the oscillation of curvature in the exterior
problem.
\end{rmk}

\begin{figure}
\begin{tabular}{ccc}
\hspace{-1cm}\includegraphics[trim=5cm 10cm 4cm 9cm,clip=true,width=5cm]{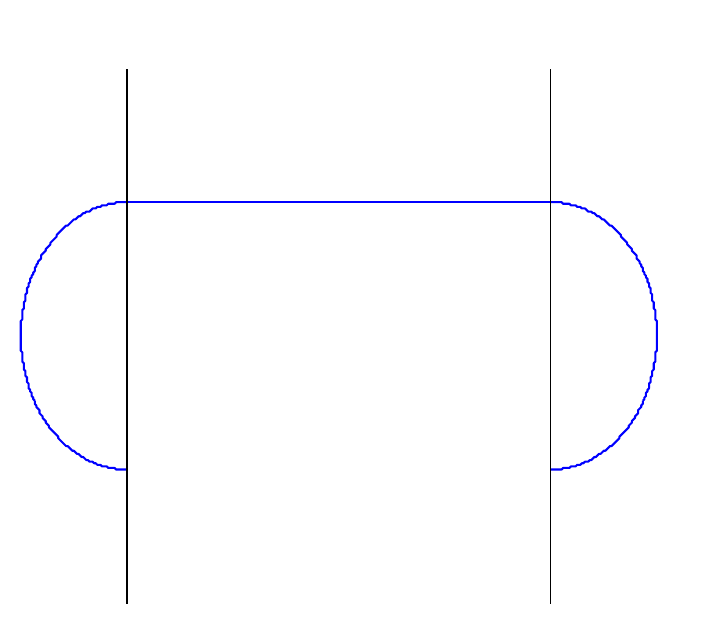}
 &
\includegraphics[trim=5cm 10cm 3cm 9cm,clip=true,width=5cm]{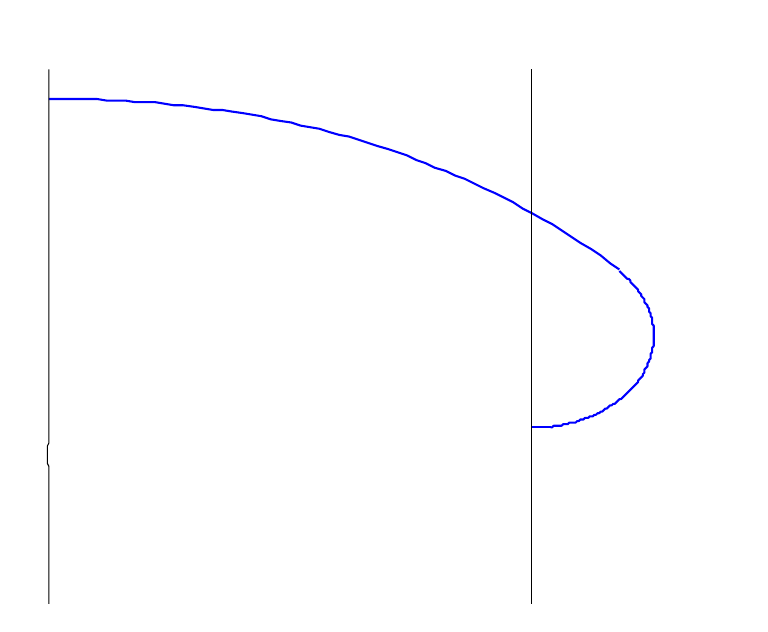}
\\
\hskip-1.2cm
(a) $\omega = 1$
 &
\hskip-0.2cm
(b) $\omega = \frac12$;
\end{tabular}
\caption{Sample initial data for the exterior problem.}
\label{FigExterior}
\end{figure}

\begin{cor}
\label{CYlowerbound}
Suppose $|e|>0$.
Let $\gamma:(-1,1)\rightarrow\R^2$ be an immersed curve satisfying the boundary
conditions of the exterior problem: $\eta_i:\R\rightarrow\R^2$ are parallel
straight lines, the origin lies in the interior of
$\eta_i$\footnote{The interior is the region between the two parallel lines.},
$\gamma$ meets $\eta_i$ at right angles, with $k_s(\pm1) = 0$, and at least one
of the tangent vectors at
the boundary $\tau_i$ points away from the interior of $\eta_i$.

Then
\[
\Ko(\gamma) + 8\pi^2\log\bigg(\frac{L(\gamma)}{|e|}\bigg)
 \ge \frac{12\pi^2\omega^2 + \pi - 2\omega\pi\sqrt{6\pi(6\pi\omega^2+1)}}{3}
\,.
\]
\end{cor}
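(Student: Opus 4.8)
The plan is to argue by contradiction using the curve diffusion flow together with the non-existence of equilibria in the exterior class. We may assume $\omega\neq0$, which is the case of interest. Suppose for contradiction that $\gamma$ satisfies
\[
\Ko(\gamma) + 8\pi^2\log\Big(\frac{L(\gamma)}{|e|}\Big) < \frac{12\pi^2\omega^2 + \pi - 2\omega\pi\sqrt{6\pi(6\pi\omega^2+1)}}{3} =: R(\omega)\,.
\]
Squaring twice, $R(\omega)$ is seen to be the smaller root of $9R^2-6(12\pi^2\omega^2+\pi)R+\pi^2=0$; this is the shape in which the admissibility threshold from Lemma \ref{KoESTcaseb} will reappear. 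Take $\gamma$ as initial data $\gamma_0$ for (CD). By Theorem \ref{TMste} there is a maximal solution on $[0,T)$. Along it $L'=-\vn{k_s}_2^2\le0$, while $L(t)\ge|e|$ always, since projecting the evolving curve onto the unit normal of the (parallel) supporting lines shows its length is at least their separation; hence $\log(L(t)/|e|)\ge0$ and $L(t)\in[|e|,L(0)]$.

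The crux is a monotonicity for $\mathcal F(t):=\Ko(\gamma_t)+8\pi^2\log(L(\gamma_t)/|e|)$. Using the commutator identities of Lemma \ref{LMevol1}, $L'=-\vn{k_s}_2^2$, and the boundary conditions of \eqref{CD} --- arranged precisely so that the successive integrations by parts leave no boundary contribution --- one computes the evolution of $\Ko$ along (CD). Splitting $k=(k-\kav)+\kav$ in the resulting cubic-in-curvature terms isolates the part proportional to $\kav^2\vn{k_s}_2^2=\frac{4\omega^2\pi^2}{L^2}\vn{k_s}_2^2$, which is of order one in $\Ko$ and therefore cannot be absorbed by smallness of $\Ko$; collecting it produces exactly the term $-8\pi^2\frac{d}{dt}\log L$, which is what forces the combination $\Ko+8\pi^2\log(L/|e|)$ in the statement. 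Adding $8\pi^2\frac{d}{dt}\log L$ to both sides and estimating the remaining cubic terms by Gagliardo--Nirenberg interpolation on $(-1,1)$ (exactly as for Theorem \ref{TMmainb}), the surviving positive contributions are quadratic or higher order in $\Ko$ and are dominated by the good term $-2L\vn{k_{ss}}_2^2$ provided $\Ko$ --- equivalently $\mathcal F$, since $\log(L/|e|)\ge0$ --- stays below the root $R(\omega)$. By continuity and the hypothesis $\mathcal F(0)<R(\omega)$, this holds on all of $[0,T)$, so $\mathcal F$ is non-increasing, $\mathcal F(t)\le\mathcal F(0)<R(\omega)$, and in particular $\Ko$ is uniformly bounded.

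With $\Ko$ and $L$ bounded, the Sobolev inequality controls $\vn{k_s}_2$, and $\vn\gamma_\infty$ is controlled by the uniform length bound together with a bound on the displacement of the boundary points along $\eta_i$, which one estimates using $\int_0^T\vn{k_s}_2^2/L\,dt=\log(L(0)/L(T))\le\log(L(0)/|e|)$, as in the proof of Theorem \ref{TMmainb}. Theorem \ref{TMste} then yields $T=\infty$, and since $\int_0^\infty\vn{k_s}_2^2\,dt=L(0)-L_\infty<\infty$, the standard interpolation and bootstrap arguments produce a sequence of times along which $\gamma(\cdot,t)\to\gamma_\infty$ in $C^\infty$, with $\gamma_\infty$ a smooth solution of the stationary problem $k_{ss}\equiv0$ under the exterior boundary conditions. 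Then $k_s$ is constant and $k_s(\pm1)=0$, so $k_s\equiv0$ and $k\equiv\kav=2\omega\pi/L_\infty\neq0$: $\gamma_\infty$ is a (possibly multiply covered) circular arc of nonzero curvature. But perpendicularity at both endpoints makes the circle's radial direction parallel to the common direction of $\eta_1$ and $\eta_2$ at each endpoint, so the two endpoints are equal or antipodal on the circle and hence lie on a single line orthogonal to $\eta_1$ and $\eta_2$ --- impossible since $\gamma_\infty(-1)\in\eta_1$, $\gamma_\infty(1)\in\eta_2$, $|e|>0$. This contradiction proves the bound.

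I expect the monotonicity step to be the main obstacle. One must compute the evolution of $\Ko$ carefully enough to (i) check that the free-boundary integrations by parts really close --- this requires the flow to preserve the vanishing of the appropriate odd arclength-derivatives of $k$ at the endpoints --- and to pin down the coefficient of $\kav^2\vn{k_s}_2^2$ so that it is cancelled exactly by $8\pi^2\frac{d}{dt}\log L$; and (ii) carry out the Gagliardo--Nirenberg estimates of the cubic terms with constants sharp enough that the admissibility condition becomes precisely $\mathcal F<R(\omega)$, the quadratic $9R^2-6(12\pi^2\omega^2+\pi)R+\pi^2=0$ being the algebraic residue of that optimisation. Everything after that --- global existence via Theorem \ref{TMste}, subconvergence, and identification of the limit --- follows the template already developed for Theorem \ref{TMmainb}.
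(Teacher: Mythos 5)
Your overall strategy --- run (CD) from $\gamma$, show that $\mathcal{F}=\Ko+8\pi^2\log(L/|e|)$ stays below the threshold $R(\omega)$, deduce global existence and subconvergence to an equilibrium of the exterior problem, and contradict the non-existence of such equilibria --- is exactly the mechanism the paper intends: Corollary \ref{CYlowerbound} is presented as a consequence of the discussion in Remark \ref{RMKfinitetime} (near-optimality of the method of Lemma \ref{KoESTcaseb} combined with the absence of exterior equilibria), and no further proof is supplied. You also correctly identified $R(\omega)$ as the smaller root of $9R^2-6(12\pi^2\omega^2+\pi)R+\pi^2=0$, and your identification of the limit (a circular arc meeting both parallel lines perpendicularly forces its endpoints to be equal or antipodal along a chord \emph{parallel} to the $\eta_i$ --- not orthogonal, as you wrote --- which is incompatible with $|e|>0$) is essentially sound.

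The central step, however, is not closed, and the one concrete claim you make about it is incorrect. In Lemma \ref{KoE} the term to be compensated is $2\kav^2L\vn{k_s}_2^2$; since $\kav=2\omega\pi/L$ and $\rd{}{t}\log L=-\vn{k_s}_2^2/L$, this term equals $-8\pi^2\omega^2\,\rd{}{t}\log L$, \emph{not} $-8\pi^2\,\rd{}{t}\log L$. The quantity cancelled exactly is therefore $\Ko+8\pi^2\omega^2\log(L/|e|)$, and the $\omega$-independent weight $8\pi^2$ in the statement does not emerge from an exact cancellation. For $|\omega|=\tfrac12$ the over-correction leaves a favourable term and your monotonicity may survive (though then the sharp constant cannot come from the cancellation you describe); for $|\omega|\ge\tfrac32$ the residue $\frac{8\pi^2(\omega^2-1)}{L}\vn{k_s}_2^2$ has the wrong sign and, via $\vn{k_s}_2^2\le\frac{L^2}{\pi^2}\vn{k_{ss}}_2^2$, contributes $8(\omega^2-1)L\vn{k_{ss}}_2^2>2L\vn{k_{ss}}_2^2$, which the dissipation cannot absorb --- so the claimed monotonicity fails as stated. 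Beyond this, the derivation of the precise value $R(\omega)$, i.e.\ estimating $3L\int(k-\kav)^2k_s^2\,ds$ and $6\kav L\int(k-\kav)k_s^2\,ds$ sharply enough that the admissibility condition becomes $\mathcal{F}<R(\omega)$, is exactly what you defer as ``the main obstacle'': the inequality that is the content of the corollary is never actually established. A secondary gap: subconvergence to an equilibrium requires compactness, hence a uniform-in-time bound on $\vn{\gamma}_\infty$ or an argument modulo translations along $\eta_i$; the bound the paper proves (Lemma \ref{LMgammainfty}) grows like $(1+t)e^t$, and your appeal to $\int_0^T\vn{k_s}_2^2/L\,dt=\log(L(0)/L(T))$ does not control the normal speed $|k_{ss}(\pm1,t)|$ with which the boundary points drift along $\eta_i$.
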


Concerning the global behaviour of this flow, we make the following conjecture.

\begin{conj}
Suppose $|e|>0$.
There exists an immersed curve $\gamma_0:(-1,1)\rightarrow\R^2$ satisfying the
boundary conditions of the exterior problem, as in Corollary
\ref{CYlowerbound}, with the following property.  The curve diffusion
flow $\gamma:(-1,1)\times[0,T)\rightarrow\R^2$ with
$\gamma_0$ as initial data exists for at most finite time, and
$\gamma(\cdot,t)$ approaches a multiply-covered straight line
in the Hausdorff metric but not in $C^k$ for any $k\geq 1$.
\end{conj}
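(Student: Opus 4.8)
The plan is to separate the two assertions — finiteness of the maximal time and the degenerate $C^0$ limit — and to base the second on a blow-up analysis at the singular time produced by the first. Throughout I would use that the winding $\omega$ is preserved (Lemma \ref{WN}) and equals the fixed nonzero exterior value, that $L' = -\vn{k_s}_2^2 \le 0$ so $L$ is nonincreasing, and that in the exterior class $L$ is bounded below by $|e| > 0$ (the curve spans the strip; since $\omega \ne 0$ forces the tangent to reverse, in fact by a larger multiple of $|e|$). Hence $L$ decreases to a limit $L_\infty > 0$ as $t \to T^-$ and $\int_0^T \vn{k_s}_2^2\,dt = L(\gamma_0) - L_\infty < \infty$.

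For finite-time existence I would argue by contradiction, assuming $T = \infty$. The integrability above yields a sequence $t_j \to \infty$ with $\vn{k_s}_2^2(t_j) \to 0$. The Neumann Poincar\'e inequality $\int_\gamma (k - \kav)^2\,ds \le (L/\pi)^2\vn{k_s}_2^2$, valid because $k - \kav$ has zero average and $k_s(\pm1) = 0$, then forces $\Ko(\gamma(\cdot,t_j)) \to 0$. If one additionally has a uniform bound on $\vn{k}_2^2$ along $\{t_j\}$ (equivalently on $\vn{k}_\infty$, as $L$ is bounded), interpolation gives subconvergence of $\gamma(\cdot,t_j)$ to a curve with $k \equiv \kav$ constant, meeting the parallel lines orthogonally with $\omega \ne 0$: an equilibrium of (CD) in the exterior class. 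No such equilibrium exists (Remark \ref{RMKfinitetime}), the quantitative form of this obstruction being Corollary \ref{CYlowerbound}. This contradiction gives $T < \infty$.

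The fragile point, and the main obstacle, is exactly the uniform curvature bound along $\{t_j\}$. If it fails then $\vn{k}_2^2 \to \infty$ and curvature concentrates; I would exclude this slow, $t \to \infty$ spreading by comparison with the homothetically shrinking solution of Figure \ref{FigLemniscate}. After reflecting across the supporting lines the exterior configuration contains a loop modelled on this shrinker, and the shrinker is singular in finite time. Turning such a comparison into a genuine finite-time bound — propagating the concentration into blow-up at some $T < \infty$ rather than permitting it to disperse as $t \to \infty$ — is the crux of the first assertion and where I expect the real difficulty to lie.

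For the limiting shape I would work at the finite singular time $T$. By the extension criterion \eqref{EQextension}, as $t \to T^-$ either $\vn{\gamma}_\infty$ or $\vn{k_s}_2$ is unbounded; since $L$ is controlled this must be a genuine curvature concentration. I would then rescale parabolically, using the scaling $s \mapsto \lambda s$, $t \mapsto \lambda^4 t$ natural for (CD) together with a type-I rate $\vn{k}_\infty^2 \sim (T-t)^{-1/2}$, centred at a concentration point; the rescaled flows should subconverge to a properly immersed self-shrinker, the free-boundary analogue of the figure-8. On the original scale the corresponding loop collapses to a single point on the axis through the origin perpendicular to the lines, while the flanking arcs straighten onto that axis; because $\omega \ne 0$ is preserved these arcs cover the limiting segment with multiplicity, producing a multiply-covered straight line. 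The collapsing loop reverses the tangent at the concentration point, so curvature is unbounded there and the convergence, though uniform as a convergence of curves (hence $C^0$), fails in $C^1$ and therefore in every $C^k$, $k \ge 1$. The genuine obstacle in this half is the classification of the rescaled limit: as recorded in Remark \ref{RMKfinitetime}, even for the closed figure-8 it is open whether symmetric perturbations converge modulo rescaling to the shrinker, so a complete proof would require settling this classification in the free-boundary setting.
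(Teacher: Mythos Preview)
The statement you are attempting is a \emph{conjecture} in the paper, not a theorem: the authors offer no proof, and indeed flag the surrounding questions (classification of blowups for curve diffusion, behaviour near the figure-8 shrinker) as open in Remark~\ref{RMKfinitetime}. There is therefore no proof in the paper to compare your proposal against.

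Your write-up is an honest strategy sketch rather than a proof, and you correctly identify the two essential gaps yourself. For the finite-time assertion, the missing uniform bound on $\vn{k}_2^2$ along the subsequence $\{t_j\}$ is exactly the obstruction; your proposed workaround via ``comparison with the homothetically shrinking solution'' is not available, because curve diffusion is fourth order and admits no maximum or comparison principle---this is precisely the failure emphasised throughout the paper (cf.\ the discussion around \cite{EM01,GI}). So the heuristic that a loop ``modelled on the shrinker'' forces finite-time blowup has no mechanism behind it. For the second assertion, you assume a type-I rate and a classification of the rescaled limit as the free-boundary figure-8; as you note, even the closed analogue of this classification is open. In short: your outline isolates the right difficulties, but both hinge on problems the paper explicitly leaves unresolved, and the comparison step is not just hard but methodologically unavailable for this flow.
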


\subsection{Elastic flow}

We finish by giving a surprising global result on the free elastic flow.
Due to the powerful a-priori estimate on the energy for the flow, global existence follows without too much difficulty.

As noted earlier however, compactness is not expected in general
due to the norm $\vn{\gamma}_\infty$ possibly growing without bound.
In order to obtain compactness, a restriction on length (either penalisation or
constraint) is usually imposed (see for example \cite{DKS02} for the first
result of this kind, and \cite{okabepozziwheeler} and \cite{okabewheeler} for
recent low-regularity developments).

This makes global results on the free elastic flow quite rare.
For the flow supported on parallel lines, we are able to obtain not only global existence, but in fact convergence under a curvature condition.
We do not need to rescale the flow, nor adjust for translations.
We also do not require reparametrisation.
Our result holds for data with initial scale-invariant $L^2$-norm squared of the curvature smaller than $\pi$.
As noted earlier, this can be thought of as a geometric stability result for
the \emph{entire} free elastic flow, as the boundary condition can, via
reflection, be understood as imposing a
\emph{cocompactness condition} on the flow.

\begin{thm}
\label{TMmainbElastic}
Suppose $|e|>0$.
Let $\gamma:(-1,1)\times[0,T)\rightarrow\R^2$ be a solution to (FE).
Assume that
\begin{equation}
\label{EQvanillaEhypothesis1}
L(0)\int_\gamma k^2\,ds\bigg|_{t=0} \le \pi\,.
\end{equation}
Then the flow exists globally $T=\infty$, and $\gamma(\cdot,t)$ converges
exponentially fast to a straight line segment parallel to $e$ in the $C^\infty$
topology.
\end{thm}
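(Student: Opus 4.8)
The plan is to run a continuity argument whose engine is a monotonicity of the length that is available precisely because of the hypothesis \eqref{EQvanillaEhypothesis1}. First, exactly as in Remark \ref{RMKomegazero}, \eqref{EQvanillaEhypothesis1} forces $\omega=0$, and by Lemma \ref{WN} this is preserved, so $\int_\gamma k\,ds\equiv0$, $\kav\equiv0$, and $\Ko=L\int_\gamma k^2\,ds$ throughout. From Lemma \ref{LMevol1} the elastic energy is non-increasing, $\frac{d}{dt}E=-2\vn{k_{ss}+\tfrac12k^3}_2^2$, while $\frac{d}{dt}L=-\vn{k_s}_2^2+\tfrac12\int_\gamma k^4\,ds$, the boundary terms dropping out because $k_s(\pm1)=0$ and $\gamma$ meets $\eta_i$ orthogonally. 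Now unless $\gamma(\cdot,t)$ is a straight segment — in which case it is a stationary solution and we are finished by uniqueness — the vanishing mean of $k$ forces an interior zero of $k$, so $\vn{k}_\infty^2\le2\vn{k}_2\vn{k_s}_2$; combining this with $\int_\gamma k^4\,ds\le\vn{k}_\infty^2\Kl$ and the Poincar\'e inequality $\vn{k}_2\le\tfrac{L}{\pi}\vn{k_s}_2$ for zero-mean functions yields $\tfrac12\int_\gamma k^4\,ds\le\tfrac1\pi\Ko\,\vn{k_s}_2^2$. Hence, as long as $\Ko\le\pi$,
\[
\frac{d}{dt}L\le-\Big(1-\tfrac1\pi\Ko\Big)\vn{k_s}_2^2\le0\,.
\]
Since also $\frac{d}{dt}E\le0$, the product $\Ko=LE$ is non-increasing wherever $\Ko\le\pi$; as $\Ko(\gamma_0)\le\pi$, this self-improves to $\Ko(\gamma(\cdot,t))\le\Ko(\gamma_0)\le\pi$ for all $t\in[0,T)$, with $\Ko$, $L$, and $E$ all non-increasing. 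In particular $|e|\le L(\gamma(\cdot,t))\le L(\gamma_0)$ and $\Kl(\gamma(\cdot,t))\le\Kl(\gamma_0)$ are bounded a priori, and $L(\gamma(\cdot,t))\searrow L_\infty\ge|e|$.

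\emph{Long-time existence.} With $L$ and $\Kl$ now controlled, I would run the standard energy estimates: starting from an inequality of the form $\frac{d}{dt}\vn{k_s}_2^2+\vn{k_{sss}}_2^2\le C$, obtained from Lemma \ref{LMevol1} by integration by parts and by absorbing the lower-order polynomial terms into $\vn{k_{sss}}_2^2$ via the interpolation inequalities on curves — exactly in the spirit of the proof of Lemma \ref{KoESTcaseb} — one bootstraps to uniform bounds on every $\vn{k_{s^m}}_2$ on $[0,T)$. The one genuinely new feature compared to the closed case is non-compactness: the endpoints slide along the supporting lines, so $\vn{\gamma}_\infty$ is not controlled by $L$ alone. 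However, the endpoint speed is $|F(\pm1,t)|=|k_{ss}(\pm1,t)+\tfrac12k^3(\pm1,t)|$, which the higher-order bounds control uniformly, so $\vn{\gamma}_\infty$ stays bounded on any finite time interval. Thus $\vn{\gamma}_\infty+\vn{k_s}_2$ is bounded on $[0,T)$ whenever $T<\infty$, and Theorem \ref{TMste} forces $T=\infty$.

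\emph{Convergence.} Integrating the length inequality gives $\int_0^\infty\big(1-\tfrac1\pi\Ko\big)\vn{k_s}_2^2\,dt\le L(\gamma_0)-L_\infty<\infty$. A non-stationary flow has $\Ko$ strictly decreasing (equality in the estimate above would force $k\equiv0$), hence eventually $\Ko\le\Ko'<\pi$, so $\int^\infty\vn{k_s}_2^2\,dt<\infty$; combined with the uniform higher-order bounds this forces $\vn{k_s}_2\to0$, whence $E=\Kl\le\tfrac{L^2}{\pi^2}\vn{k_s}_2^2\to0$ and then $\vn{k_{s^m}}_2\to0$ for all $m$ by interpolation. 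Therefore $\gamma(\cdot,t)$ subconverges in $C^\infty$ to a straight segment orthogonal to both supporting lines, i.e.\ to a translate of $e$, necessarily of length $|e|$. For the rate, once $E$ is small I would use $\vn{k_{ss}+\tfrac12k^3}_2^2=\vn{k_{ss}}_2^2-3\int_\gamma k^2k_s^2\,ds+\tfrac14\int_\gamma k^6\,ds$ together with the iterated Poincar\'e bound $\vn{k_{ss}}_2^2\ge\tfrac{\pi^4}{L^4}E$ (valid since $k_s(\pm1)=0$ and $\int_\gamma k\,ds=0$) to absorb the cubic term and obtain $\frac{d}{dt}E\le-cE$; then $E$, $\Ko$, and — by the smoothing estimates — every $\vn{k_{s^m}}_2$ decay exponentially, so the endpoint speed decays exponentially as well, pinning down the limiting translate and upgrading the convergence to exponential convergence in $C^\infty$ to a fixed translate of $e$.

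\emph{Main obstacle.} The conceptual heart of the argument is the length monotonicity under $\Ko\le\pi$: this is what makes a stability statement possible with no length constraint, and $\pi$ is exactly the threshold at which the quartic term $\tfrac12\int_\gamma k^4\,ds$ is dominated by $\vn{k_s}_2^2$. The principal technical obstacle is the a priori higher-order control together with the genuine non-compactness — boundedness of $L$ does not bound $\vn{\gamma}_\infty$, so the sliding of the curve along the supporting lines must be handled separately, both in order to invoke the extension criterion of Theorem \ref{TMste} and in order to identify the limit.
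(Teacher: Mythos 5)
Your proposal is correct in substance, and its first half---$\omega=0$ from the smallness hypothesis, length monotonicity while $\Ko\le\pi$ via $\tfrac12\int_\gamma k^4\,ds\le\tfrac1\pi\Ko\vn{k_s}_2^2$, monotonicity of the product $\Ko=LE$, higher-order $L^2$ bounds by the Dziuk--Kuwert--Sch\"atzle interpolation, and the extension criterion of Theorem \ref{TMste}---is essentially the paper's route through Lemmata \ref{LMlengthestimate}, \ref{LMkoscestimate}, \ref{LMksestimate} and \ref{LMhigher1} (the paper controls $\vn{\gamma}_\infty$ on finite intervals by a Gr\"onwall estimate on $\int_\gamma|\gamma|^2\,ds$ rather than by the endpoint speed, but both work). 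Where you genuinely diverge is the passage from subconvergence to full exponential convergence. The paper extracts a subsequence along which $\vn{F}_2\to0$ from the gradient-flow identity, feeds the resulting smallness of $\Ko$ into the decay estimate of Lemma \ref{LMdsdecay0}, and then obtains uniqueness of the limit by linearising $F$ about a straight line (the linearisation is $u_{x^4}$ with Neumann conditions, whose kernel is one-dimensional) and invoking Hale--Raugel. You instead close the loop with the direct differential inequality $\frac{d}{dt}E\le-cE$, obtained from $\vn{F}_2^2=\vn{k_{ss}}_2^2-3\int_\gamma k^2k_s^2\,ds+\tfrac14\int_\gamma k^6\,ds$ and the iterated Poincar\'e bound $\vn{k_{ss}}_2^2\ge\tfrac{\pi^4}{L^4}E$ once $\vn{k_s}_2$ is small, and then pin down the limiting translate by integrability of the exponentially decaying endpoint speed. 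This is more elementary and self-contained (no dynamical-systems machinery, no linearisation computation), at the cost of being specific to the straight-line limit; the paper's linearisation argument is the one that transfers verbatim to (CD) in Theorem \ref{TMmainb}. One small caveat applies equally to both arguments: in the borderline case $\Ko(\gamma_0)=\pi$ your step ``eventually $\Ko<\pi$ strictly'' tacitly assumes the flow is not a stationary nontrivial free elastica at that energy level; this is the same degeneracy the paper's subconvergence proposition passes over when it bounds $\Ko$ by $\int_\gamma F\,ds$, so it is not a defect relative to the paper's own proof.
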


\begin{rmk}
As with (CD), it is unknown how to determine, from the initial data, which
straight line the flow will converge to.
\end{rmk}

Sharpness of the given condition is unknown, however, we do not expect it to be
sharp.
Based on the winding number calculation in Lemma \ref{LMlengthestimate} and
numerical evidence, we make the following conjecture.

\begin{conj}
	Theorem \ref{TMmainbElastic} holds with \eqref{EQvanillaEhypothesis1} replaced by
\begin{equation}
\label{EQconj}
L(0)\int_\gamma k^2\,ds\bigg|_{t=0} \le \pi^2\,.
\end{equation}
\end{conj}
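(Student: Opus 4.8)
The plan is to prove the conjecture in two essentially independent stages. The first, which is sharp, upgrades the winding-number obstruction so that the enlarged hypothesis \eqref{EQconj} still forces $\omega=0$; the second runs the oscillation-of-curvature machinery of Theorem \ref{TMmainbElastic} up to the full threshold $\Ko(\gamma_0)\le\pi^2$. The starting point is the algebraic identity $L\int_\gamma k^2\,ds=\Ko(\gamma)+4\omega^2\pi^2$, which follows from $\int_\gamma(k-\kav)\,ds=0$ and $\int_\gamma k\,ds=2\omega\pi$. Under \eqref{EQconj} it gives $4\omega^2\pi^2\le\pi^2$, hence $\omega^2\le\tfrac14$; since the boundary conditions force $\omega\in\tfrac12\mathbb{Z}$ (cf.\ Lemma \ref{WN}), the only admissible values are $\omega\in\{0,\pm\tfrac12\}$.

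The key new observation is that $\omega=\pm\tfrac12$ is geometrically impossible. In that case the identity forces $\Ko(\gamma_0)=L\int_\gamma k^2\,ds-\pi^2\le0$, so $\Ko(\gamma_0)=0$ and $k\equiv\kav$ is a nonzero constant; but a constant-curvature curve with nonzero turning is a circular arc, and a circular arc cannot meet two distinct parallel lines perpendicularly, since perpendicularity forces the radius at each endpoint to be parallel to the supporting line there, placing the centre simultaneously on both lines --- impossible when $|e|>0$. Hence $\omega=0$, so $\Ko(\gamma_0)=L\int_\gamma k^2\,ds\le\pi^2$, and as the perpendicularity condition fixes the endpoint tangent directions, $\omega$ is preserved along (E) (cf.\ Lemma \ref{WN}), giving $\int_\gamma k\,ds\equiv0$ for all time. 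This is exactly the threshold predicted by Lemma \ref{LMlengthestimate}: the borderline $\omega=\tfrac12$, $\Ko=0$ configuration is the semicircle, for which $L\int_\gamma k^2\,ds=\pi^2$ precisely, and it is inadmissible; strictly above $\pi^2$, admissible $\omega=\tfrac12$ data with $\Ko>0$ appear, for which convergence to a translate of $e$ must fail.

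With $\omega=0$ in hand, the second stage mirrors Lemma \ref{KoESTcaseb}. Because $\int_\gamma k\,ds=0$ the curvature has an interior zero and $\Ko=L\int_\gamma k^2\,ds$, so I would differentiate $\Ko$ along (E) and attempt to absorb every lower-order term --- especially those produced by the cubic in $F=k_{ss}+\tfrac12k^3$, for instance $\tfrac12\int_\gamma k^4\,ds$ --- into the leading dissipation $-cL\int_\gamma k_{ss}^2\,ds$ by Gagliardo--Nirenberg interpolation together with the Neumann data $k_s(\pm1)=0$. The value of $\Ko$ at which the leading term dominates is controlled by the optimal constant in the interpolation inequality on $[-1,1]$ adapted to these boundary conditions and to the zero-average constraint on $k$; the goal is the differential inequality $\frac{d}{dt}\Ko\le-\delta\,\Ko$ for some $\delta>0$ whenever $\Ko\le\pi^2$, which both propagates the bound $\Ko(t)\le\pi^2$ by a standard continuity argument and yields exponential decay.

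Global existence and convergence then proceed as in Theorem \ref{TMmainbElastic}. The identity $\frac{d}{dt}L=-\int_\gamma k_s^2\,ds+\tfrac12\int_\gamma k^4\,ds$, combined with the exponential decay of $\Ko$ and interpolation, makes $L'$ integrable over $[0,\infty)$, so the length converges to a finite limit; this controls $\vn{\gamma}_\infty$, while the decay of $\Ko$ controls $\vn{k_s}_2$, so the extension criterion \eqref{EQextension} is never met and $T=\infty$. Bootstrapping the decay through all arclength derivatives of curvature gives $C^\infty$ convergence to an equilibrium with $\Ko=0$ and $\omega=0$, i.e.\ $k\equiv0$, a translate of $e$. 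The main obstacle is the second stage: in contrast to the winding-number step, which is sharp at exactly $\pi^2$, closing the differential inequality for $\Ko$ up to the full value $\pi^2$ requires the optimal interpolation constant for the cubic term. The gap between the crude constant sufficient to reach $\pi$ in Theorem \ref{TMmainbElastic} and the sharp constant needed here is precisely what keeps the result conjectural; the semicircle computation strongly suggests the sharp analytic threshold is again $\pi^2$, but establishing this rigorously is the crux.
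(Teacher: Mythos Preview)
The statement in question is a \emph{conjecture} in the paper; there is no proof to compare against. Your proposal is likewise not a proof, and you say so yourself: your second stage --- closing the differential inequality for $\Ko$ up to the threshold $\pi^2$ --- is explicitly left open, with the honest remark that obtaining the sharp interpolation constant ``is precisely what keeps the result conjectural.'' That assessment is accurate and matches the paper's own position.

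Your first stage, however, is a genuine and correct contribution that goes beyond what the paper makes explicit. The identity $L\vn{k}_2^2=\Ko+4\omega^2\pi^2$ together with \eqref{EQconj} forces $|\omega|\le\tfrac12$, and your geometric exclusion of $\omega=\pm\tfrac12$ is sound: equality would force $\Ko=0$, hence $k\equiv\kav\ne0$, and a circular arc meeting a line perpendicularly has its centre on that line (the radius is perpendicular to the tangent, hence parallel to the line, and the endpoint lies on the line), so the centre would lie on both $\eta_1$ and $\eta_2$, impossible when $|e|>0$. This makes rigorous the paper's informal remark that ``the limit as $\delta\searrow0$ has $\omega=0$ and this does not seem to be avoidable'', and it confirms that the winding-number obstruction is sharp at exactly $\pi^2$.

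The substantive gap is entirely in Stage~2. Concretely, the paper's mechanism for $\Ko\le\pi$ (Lemmas \ref{LMlengthestimate}--\ref{LMkoscestimate}) hinges on $L'\le0$, which in turn uses $\vn{k}_\infty^2\le\frac{2L}{\pi}\vn{k_s}_2^2$ to absorb $\tfrac12\vn{k}_4^4$ into $\vn{k_s}_2^2$; with the constant $2/\pi$ this closes only when $\Ko\le\pi$. To reach $\pi^2$ one would need either a sharper $L^\infty$--Poincar\'e inequality for functions with zero mean and Neumann data on the derivative, or a different splitting that brings the dissipation $-2L\vn{F}_2^2$ into play against the bad term $\tfrac12\vn{k}_4^4\vn{k}_2^2$. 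Neither you nor the paper supplies this, and your Stage~3 (length control, global existence, convergence) is entirely conditional on Stage~2 succeeding. So the proposal is a correct roadmap with a correctly identified obstruction, not a proof.
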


The argument for including equality in \eqref{EQconj} above is as follows.
It is possible to construct, for any $\delta > 0$, a smooth curve satisfying the boundary conditions with
\[
	\omega = \frac12\quad\text{and}\quad L\int_\gamma k^2\,ds = \pi^2 + \delta\,.
\]
Clearly such curves can not smoothly converge to a straight line; in fact,
numerical evidence suggests that (unlike (CD) flow) such curves expand
indefinitely and do not display any compactness property.
In particular, length is no longer controlled a-priori.
However, these non-compact examples use $\delta>0$ in an essential way.
As we are not aware of any other obstacles to the result, we conjecture that
\eqref{EQconj} is optimal.

As a final remark, we note that if we enforce $\omega=0$ then the conjectured condition \eqref{EQconj} should be replaced by
\begin{equation}
\label{EQhelp7}
	L(0)\int_\gamma k^2\,ds\bigg|_{t=0} < 4\pi^2\,.
\end{equation}
This is because in the absence of obstacles from the topology (as with the
winding number above), the main source of remaining difficulty lies in the set
of equilibria itself.
The least energy elastica (excluding straight line segments parallel to $e$)
that satisfy the boundary conditions are the
rectangular elastica; these all have $L||k||_2^2 \ge 4\pi^2$.
Although $L||k||_2^2$ is not automatically a preserved quantity for the free
elastic flow, this is good evidence to suggest that the optimal condition is
\eqref{EQhelp7} above.

\section*{Acknowledgements}

The authors thank their colleagues for several useful discussions, in
particular Ben Andrews, James McCoy and Philip Schrader.
This work was completed with each of the authors supported by ARC discovery
project DP150100375 and DP228371030.
Part of this work was completed during a visit by the first author to
Universit\"at Regensburg in July 2012.  At the time he was supported by the
Alexander-von-Humboldt foundation (ID 1137814).  He would like to thank Harald
Garcke, Daniel Depner, and Lars M\"uller for their hospitality and support.

\section{Evolution equations and standard inequalities}

Changes in the Euclidean geometry of the evolving curves can be understood via
the noncommutativity of the Euclidean arc-length and time derivatives.  In this
section and throughout the rest of the paper we reparametrise the evolving
family of curves by arc-length, with arc-length parameter $s$.

\begin{lem}
\label{LMevol1}
Let $\gamma:(-1,1)\times[0,T)\rightarrow\R^2$ be a solution to \eqref{CD} given by Theorem \ref{TMste}.
Then
\[
[\partial_t,\partial_s] = \partial_t\partial_s - \partial_s\partial_t = -kF\partial_s\,.
\]
\end{lem}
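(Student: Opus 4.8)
The plan is the standard computation that underlies essentially all curvature-flow bookkeeping, adapted to the present fixed-endpoint parametrisation. First I would fix the base parameter $u\in(-1,1)$, which is independent of $t$, write $v = |\gamma_u|$ for the speed of the parametrisation, and record that the arc-length derivative is the operator $\partial_s = v^{-1}\partial_u$. Because $u$ and $t$ are independent coordinates, $\partial_t$ and $\partial_u$ commute, and so for any smooth (scalar- or vector-valued) function $f$ on $(-1,1)\times[0,T)$ one has
\[
[\partial_t,\partial_s]\,f
 = \partial_t\big(v^{-1}\partial_u f\big) - v^{-1}\partial_u\big(\partial_t f\big)
 = -\,\frac{\partial_t v}{v^2}\,\partial_u f
 = -\,\frac{\partial_t v}{v}\,\partial_s f\,,
\]
since the mixed term $v^{-1}(\partial_t\partial_u f - \partial_u\partial_t f)$ vanishes. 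Thus the entire statement reduces to the identity $\partial_t v = kF v$.

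To establish this I would differentiate $v = \sqrt{\IP{\gamma_u}{\gamma_u}}$ in time, using $\tau = v^{-1}\gamma_u$ and $\partial_t\gamma_u = \partial_u\partial_t\gamma$:
\[
\partial_t v = \frac{\IP{\gamma_u}{\partial_t\gamma_u}}{v} = \IP{\tau}{\partial_u\partial_t\gamma} = \IP{\tau}{-\partial_u(F\nu)} = -F\,\IP{\tau}{\nu_u}\,.
\]
Here the crucial input is that the flow supplied by Theorem \ref{TMste} has \emph{purely normal} velocity $\partial_t\gamma = -F\nu$ (the Stahl-type adapted coordinates behind the local existence theory introduce no tangential component), so that $\IP{\tau}{\partial_t\gamma_u}$ only ever feels the normal speed, and it does so through the tangential part of $\nu_u$. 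Invoking the Frenet relation $\nu_s = -k\tau$ — equivalently $\nu_u = -vk\tau$ — gives $\IP{\tau}{\nu_u} = -vk$, hence $\partial_t v = kFv$, and substituting into the previous display yields $[\partial_t,\partial_s] = -kF\partial_s$.

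I do not anticipate a genuine obstacle: this is a short, routine calculation. The only two points needing care are (i) handling the reparametrisation-by-arc-length correctly, namely never letting the parameter itself depend on $t$ but instead keeping $u$ fixed and regarding $\partial_s$ as the operator $v^{-1}\partial_u$; and (ii) confirming, from the structure of \eqref{CD} and the existence theory behind Theorem \ref{TMste}, that the velocity carries no tangential component, so that $\partial_t v = kFv$ holds exactly. Had a tangential velocity $\varphi\tau$ been present, the same computation would instead give $\partial_t v = (kF+\varphi_s)v$; it is precisely its absence that produces the clean commutator stated in the lemma, and that makes the resulting evolution equations depend only on the normal speed $F$.
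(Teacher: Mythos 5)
Your proof is correct and follows essentially the same route as the paper: both reduce the commutator to the evolution of the speed $v=|\gamma_u|$, compute $\partial_t v = kFv$ from the purely normal velocity $-F\nu$ together with $\nu_u=-vk\tau$, and then read off $[\partial_t,\partial_s]=-kF\partial_s$ from $\partial_s=v^{-1}\partial_u$. Your remark on how a tangential component $\varphi\tau$ would modify the formula is a sensible sanity check but not needed for the lemma.
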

\begin{proof}
We compute
\begin{align*}
\partial_t |\gamma_x|^2 &= 2|\gamma_x|\,|\gamma_x|_t
\\
 &= 2\IP{\gamma_x}{\gamma_{tx}}
 = 2\IP{\gamma_x}{(-F\nu)_x}
\\
 &= -2F\IP{\gamma_x}{\nu_x}
\\
 &= 2F\IP{\gamma_x}{\gamma_x}
\\
 &= 2\big(kF\big)|\gamma_x|^2
\,;
\end{align*}
so
\begin{equation}
\label{EQevolareaelement}
\partial_t|\gamma_x|
 = \big(kF\big)|\gamma_x|
\end{equation}
and
\[
\partial_t\partial_s - \partial_s\partial_t
 = \partial_t\Big(\frac{1}{|\gamma_x|}\Big)\partial_x
 = \frac{-kF}{|\gamma_x|}\partial_x
 = -kF\partial_s\,.
\]
\end{proof}
As we shall see below, Lemma \ref{LMevol1} facilitates quick calculation of the
evolution of the tangent, normal, curvature, and derivative of curvature
vectors.
\begin{lem}
\label{LMevo}
Let $\gamma:(-1,1)\times[0,T)\rightarrow\R^2$ be a solution to \eqref{CD} given by Theorem \ref{TMste}.
The following evolution equations hold:
\begin{align*}
\tau_t &= -F_s\nu\,,
\\
\nu_t &= F_s\tau\,,
\\
k_t &= - F_{ss} - Fk^2\,,
\\
k_{st}
&= - F_{s^3} - F_sk^2 - 3Fk_sk
\,.
\end{align*}
\end{lem}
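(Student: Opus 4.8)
The statement to prove is Lemma~\ref{LMevo}, giving the evolution equations for $\tau$, $\nu$, $k$, and $k_s$ under the flow \eqref{CD}. The plan is to derive each equation in turn, using only the commutator relation $[\partial_t,\partial_s] = -kF\partial_s$ from Lemma~\ref{LMevol1} together with the defining relations $\tau = \gamma_s$, $\nu = \rot\tau$, $\kappa = \gamma_{ss} = k\nu$, and the flow equation $\gamma_t = -F\nu$. First I would compute $\tau_t = \partial_t\partial_s\gamma = \partial_s\partial_t\gamma + [\partial_t,\partial_s]\gamma = \partial_s(-F\nu) - kF\tau = -F_s\nu - Fk\nu - kF\tau$. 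Wait---here one must be careful: $\nu_s = -k\tau$ (Frenet), so $\partial_s(-F\nu) = -F_s\nu - F\nu_s = -F_s\nu + Fk\tau$, and then adding $[\partial_t,\partial_s]\gamma = -kF\partial_s\gamma = -kF\tau$ gives $\tau_t = -F_s\nu + Fk\tau - kF\tau = -F_s\nu$, as claimed. The cancellation of the tangential terms is the first thing to check carefully, and it is the small conceptual point in the whole lemma.

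Next, since $|\nu| = |\tau| = 1$ and $\{\tau,\nu\}$ is orthonormal, I would write $\nu_t = a\tau + b\nu$ and determine the coefficients: $b = \IP{\nu_t}{\nu} = \tfrac12\partial_t|\nu|^2 = 0$, and $a = \IP{\nu_t}{\tau} = -\IP{\nu}{\tau_t} = -\IP{\nu}{-F_s\nu} = F_s$, giving $\nu_t = F_s\tau$. (Alternatively, apply $\rot$ to the formula for $\tau_t$, since $\rot$ commutes with $\partial_t$ and $\rot\nu = -\tau$.) For $k_t$: from $k = \IP{\gamma_{ss}}{\nu}$, I differentiate, or more cleanly use $k_t = \IP{\kappa}{\nu}_t = \IP{\tau_{st}}{\nu} + \IP{\tau_s}{\nu_t}$. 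Expanding $\tau_{st} = \partial_s\tau_t + [\partial_t,\partial_s]^{\text{applied appropriately}}$---concretely $\partial_t\partial_s\tau = \partial_s\partial_t\tau - kF\partial_s\tau = \partial_s(-F_s\nu) - kF(k\nu) = -F_{ss}\nu - F_s\nu_s - k^2F\nu = -F_{ss}\nu + F_sk\tau - k^2F\nu$. Taking the inner product with $\nu$ kills the $\tau$ term, and $\IP{\tau_s}{\nu_t} = \IP{k\nu}{F_s\tau} = 0$, so $k_t = -F_{ss} - Fk^2$.

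Finally, for $k_{st}$ I would apply the commutator one more time: $k_{st} = \partial_t\partial_s k = \partial_s\partial_t k + [\partial_t,\partial_s]k = \partial_s(-F_{ss} - Fk^2) - kF k_s = -F_{sss} - F_sk^2 - 2Fkk_s - kFk_s = -F_{s^3} - F_sk^2 - 3Fkk_s$, matching the stated formula. The main (and really only) obstacle is bookkeeping: one must consistently track the tangential/normal decomposition, correctly invoke the Frenet relations $\tau_s = k\nu$ and $\nu_s = -k\tau$, and apply the commutator $[\partial_t,\partial_s] = -kF\partial_s$ to each quantity at the right moment. There is no analytic difficulty---the proof is a sequence of short computations, each of which reduces to the previous one via Lemma~\ref{LMevol1}. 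I would present the four derivations in the order above, since each uses its predecessor.
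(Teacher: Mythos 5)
Your proposal is correct and follows essentially the same route as the paper: the commutator $[\partial_t,\partial_s]=-kF\partial_s$ plus the Frenet relations gives $\tau_t$, orthonormality gives $\nu_t$, and projecting $\kappa_t=\partial_s\tau_t-kF\kappa$ onto $\nu$ gives $k_t$. The only (harmless) divergence is at $k_{st}$, where you apply the commutator directly to the scalar $k$ rather than computing the vector $\kappa_{st}$ as in the paper's main proof; this is exactly the shortcut the authors themselves record in the remark following their proof, so nothing is lost.
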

\begin{proof}
Lemma \ref{LMevol1} is used repeatedly in this proof.
Let us begin with
\begin{align*}
\tau_t &= \gamma_{st} = \gamma_{ts} - kF\gamma_s
\\
       &= -(F\nu)_s - kF\tau
\\
       &= -F_s\nu + (kF - kF)\tau
\\
       &= -F_s\nu
\,.
\end{align*}
Since $\IP{\tau}{\nu}=0$, this implies directly
\[
\nu_t = -\IP{\nu}{-F_s\nu}\tau
 = F_s\tau
\,.
\]
Similarly,
\begin{align*}
\kappa_t &= \gamma_{sst} = \gamma_{sts} - kF\gamma_{ss}
\\
 &= -(F_s\nu)_s - kF\kappa
\\
 &= -(F_{ss} + k^2F)\nu + kF_s\tau\,.
\end{align*}
The tangential movement of the curvature vector here can be understood as rotation, whereas the normal velocity is a dilation.
Indeed, the scalar curvature $k = \IP{\nu}{\kappa}$ evolves by
\begin{align*}
k_t &=
  \IP{F_{ss}\tau}{\kappa} +
  \IP{-(F_{ss} + k^2F)\nu + (kF_s)\tau}{\nu}
\\
&= - F_{ss} - Fk^2
\,.
\end{align*}
For $\kappa_s = k_s\nu - k^2\tau$ we proceed as before:
\begin{align*}
\kappa_{st} &= \kappa_{ts} - kF\kappa_s
\\
 &=  \big[(-F_{ss} - k^2F)\nu + kF_s\tau\big]_s - kF\kappa_s
\\
 &=  \big(-F_{s^3} - 2kk_sF - k^2F_s + k^2F_s - kk_sF\big)\nu
    + X\tau
\\
 &=  \big(-F_{s^3} - 3kk_sF\big)\nu
    + X\tau
\,.
\end{align*}
In the above $X = (kF_s)_s + kF_{ss}$, but this isn't important as this term will vanish when we take an inner product with $\nu$.
That is what we do now: Since $k_s = \IP{\kappa_s}{\nu}$, this implies
\begin{align*}
k_{st} &=
  \IP{F_s\tau}{k_s\nu-k^2\tau}
 +
  \IP{
     \big(-F_{s^3} - 3kk_sF\big)\nu
    + X\tau}{\nu}
\\&= - F_{s^3} - F_sk^2 - 3Fk_sk
\,.
\end{align*}
\end{proof}
\begin{rmk}
We note that the above evolution equation can be obtained without requiring explicit calculation of $\kappa_{st}$ by working directly
with the scalar quantites:
\begin{align*}
k_{st} &= k_{ts} - Fk_sk
\\
       &= \big(-F_{ss} - Fk^2\big)_s - Fk_sk
\\
&= - F_{s^3} - F_sk^2 - 3Fk_sk
\,.
\end{align*}
We included the method using the evolution of the vector $\kappa_s$ as we feel the quantity $\kappa_{st}$, and in particular the
nontrivial cancellations involved in moving from $\kappa_s \mapsto k_s$ are also of interest.
\end{rmk}

Induction on the commutator relations yields the following generic formula for the evolution of the $l$-th
derivative of curvature.
Similar formulae were derived in \cite[Lemma 2.3]{DKS02}.

\begin{lem}
\label{LMevo2}
Let $\gamma:(-1,1)\times[0,T)\rightarrow\R^2$ be a solution to \eqref{CD} given by Theorem \ref{TMste}.
The evolution of the $l$-th derivative of curvature 
\begin{itemize}
	\item[(CD)] along the curve diffusion flow is given by
\[
\partial_tk_{s^l}
 = -k_{s^{(l+4)}} + \sum_{q+r+u=l} c_{qru}k_{s^{(q+2)}}k_{s^r}k_{s^u}\,,
\]
for constants $c_{qru}\in\R$ with $q,r,u\geq 0$;
\item[(FE)] along the elastic flow is given by
\[
\partial_tk_{s^l}
 = -k_{s^{(l+4)}} + \sum_{q+r+u=l} c_{qru}k_{s^{(q+2)}}k_{s^r}k_{s^u}
 + \sum_{q+r+u+v+w=l} c_{qruvw}k_{s^q}k_{s^r}k_{s^u}k_{s^v}k_{s^w}\,,
\]
for constants $c_{qru}, c_{qruvw}\in\R$ with $q,r,u,v,w\geq 0$.
\end{itemize}
\end{lem}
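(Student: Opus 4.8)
The plan is to prove both formulae by induction on $l$, using the commutator relation from Lemma \ref{LMevol1} together with the base cases already computed in Lemma \ref{LMevo}. The key structural observation is that the flow speed $F$ is itself a polynomial in arc-length derivatives of curvature: for (CD) we have $F = k_{ss}$, so $F$ contributes one term of ``weight'' type $k_{s^2}$; for (E) we have $F = k_{ss} + \tfrac12 k^3$, contributing additionally a cubic term $k^3$. I will track the \emph{admissible monomials}: for (CD), sums $\sum c_{qru} k_{s^{(q+2)}} k_{s^r} k_{s^u}$ with $q+r+u = l$; for (E), these plus sums $\sum c_{qruvw} k_{s^q} k_{s^r} k_{s^u} k_{s^v} k_{s^w}$ with the five indices summing to $l$. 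The induction hypothesis is precisely that $\partial_t k_{s^l}$ has this form.

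First I would set up the inductive step: assuming $\partial_t k_{s^l} = -k_{s^{(l+4)}} + P_l$ where $P_l$ is the admissible polynomial part, apply $\partial_s$ and use Lemma \ref{LMevol1} in the form $\partial_t k_{s^{(l+1)}} = \partial_s \partial_t k_{s^l} - kF\, k_{s^{(l+1)}}$. Differentiating $-k_{s^{(l+4)}}$ gives the new leading term $-k_{s^{(l+5)}}$; differentiating $P_l$ keeps it admissible since $\partial_s$ raises exactly one derivative index in each monomial by one and preserves the total count (and the number of factors). The remaining job is to check that the extra term $-kF\,k_{s^{(l+1)}}$ is admissible. For (CD), $-k F k_{s^{(l+1)}} = -k\, k_{ss}\, k_{s^{(l+1)}}$, which is of the form $c\,k_{s^{(q+2)}} k_{s^r} k_{s^u}$ with $(q,r,u) = (0, 1+l, 0)$ — indeed $q+r+u = l+1$, as required. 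For (E) one gets additionally $-\tfrac12 k^4 k_{s^{(l+1)}}$, a product of five curvature factors with indices summing to $l+1$, which is exactly an admissible quintic term; and one must also recheck that in this case the polynomial $P_l$ and its $s$-derivative genuinely contain \emph{both} cubic and quintic families, which follows because $F_{ss}$ in the base evolution $k_t = -F_{ss} - Fk^2$ already produces a $(k^3)''$ contribution and $Fk^2$ produces $k^5$ and $k_{ss}k^2$ terms.

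The only real bookkeeping obstacle — and I expect this to be the main point requiring care — is verifying the \emph{index arithmetic}: that after differentiating a monomial of total index $l$ coming from $P_l$, every resulting monomial still has total index exactly $l+1$ and the correct number of curvature factors, and that the ``$+2$'' shift sitting on the distinguished factor $k_{s^{(q+2)}}$ is preserved under $\partial_s$ (it simply becomes $k_{s^{(q+3)}} = k_{s^{((q+1)+2)}}$, so $q \mapsto q+1$). One has to be slightly careful that differentiating the non-distinguished factors $k_{s^r}$ or $k_{s^u}$ (which have no built-in shift) can produce a factor $k_{s^{(r+1)}}$ that one might worry needs to be recast as a ``$+2$''-shifted factor; but the statement allows $q \ge 0$ arbitrary, so any single factor of index $\ge 2$ can serve as the distinguished one, and factors of index $0$ or $1$ are absorbed into the $k_{s^r}, k_{s^u}$ slots — so no incompatibility arises. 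Since the constants $c_{qru}$, $c_{qruvw}$ are unconstrained real numbers, no sharp tracking of their values is needed; only the shape of the sum matters. This completes the induction, and the comparison with \cite[Lemma 2.3]{DKS02} confirms the analogous closed-curve computation. $\qquad\square$
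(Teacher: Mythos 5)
The paper offers no proof of this lemma at all: it is stated immediately after the one-line remark that ``induction on the commutator relations yields the following generic formula,'' with a pointer to \cite[Lemma 2.3]{DKS02}. Your induction on $l$ via $\partial_t k_{s^{(l+1)}} = \partial_s\partial_t k_{s^l} - kF\,k_{s^{(l+1)}}$ is exactly the argument being gestured at, and for (CD) it is complete and correct: since $F=k_{ss}$, every cubic monomial ever produced (whether from differentiating an admissible term or from the commutator contribution $-kk_{ss}k_{s^{(l+1)}}$) carries a factor with at least two derivatives, so the class of terms $k_{s^{(q+2)}}k_{s^r}k_{s^u}$ with $q+r+u=l$ really is closed under the inductive step.

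The one place your write-up glosses over a real issue is the base case for (E). You assert that the cubic family is present ``because $F_{ss}$ \dots already produces a $(k^3)''$ contribution,'' but $(k^3)_{ss}=6kk_s^2+3k^2k_{ss}$, and the monomial $kk_s^2$ (derivative indices $0,1,1$) does \emph{not} fit the displayed form $k_{s^{(q+2)}}k_{s^r}k_{s^u}$ with $q+r+u=0$, which forces one factor to carry at least two derivatives; the same happens at $l=1$, where $k_s^3$ appears. Your observation that low-index factors can be ``absorbed into the $k_{s^r},k_{s^u}$ slots'' only rescues monomials possessing at least one factor of index $\ge 2$. So for (E) the cubic sum must be read in the looser $P_3^{l+2}$ sense of \cite{DKS02} --- any three factors whose derivative indices sum to $l+2$ --- and with that reading your induction closes without further change, since that larger class is also stable under $\partial_s$ and contains all commutator contributions. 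You should either say this explicitly or check the base case against the weaker class; as written, your base-case verification would fail on a literal reading of the statement.
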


Equation \eqref{EQevolareaelement} gives the evolution of the length element, and thus, of the change in
length of the evolving curves.

\begin{lem}
\label{LMevol2}
Let $\gamma:(-1,1)\times[0,T)\rightarrow\R^2$ be a solution to \eqref{CD} given by Theorem \ref{TMste}.
Then
\[
L'(\gamma(\cdot,t)) =   \int_\gamma Fk\,ds\,.
\]
In particular for (CD) flow we have
\[
L'(\gamma(\cdot,t)) = - \int_\gamma k_s^2\,ds\,.
\]
\end{lem}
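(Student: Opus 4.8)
The plan is to differentiate the length functional directly and feed in the evolution of the length element established in \eqref{EQevolareaelement}. Since the solution furnished by Theorem \ref{TMste} is smooth on $(-1,1)\times[0,T)$, differentiation under the integral sign is justified, and writing $L(\gamma(\cdot,t)) = \int_{-1}^1 |\gamma_u|\,du$ we compute
\[
L'(\gamma(\cdot,t)) = \int_{-1}^1 \partial_t|\gamma_u|\,du = \int_{-1}^1 \big(kF\big)|\gamma_u|\,du = \int Fk\,ds\,,
\]
where in the last step we have used $\partial_t|\gamma_u| = (kF)|\gamma_u|$ and converted back to the arc-length measure $ds = |\gamma_u|\,du$. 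This gives the first identity.

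For the specialisation to (CD) I substitute the normal speed $F = k_{ss}$ and integrate by parts once in the arc-length variable:
\[
\int Fk\,ds = \int k_{ss}\,k\,ds = \big[k_s k\big]_{\gamma(-1,t)}^{\gamma(1,t)} - \int k_s^2\,ds\,.
\]
The boundary term vanishes identically: the no-flux condition $k_s(\pm1,t) = 0$ from \eqref{CD} forces $k_s$ to be zero at both endpoints for every $t\in[0,T)$. Hence $L'(\gamma(\cdot,t)) = -\int k_s^2\,ds$, which is the second identity.

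There is essentially no obstacle in this argument; the one point requiring attention is the boundary term generated by the integration by parts, and discarding it is exactly the purpose of the no-flux boundary condition $k_s(\pm1) = 0$. (The same computation with $F = k_{ss} + \frac12 k^3$ yields the elastic-flow counterpart, the boundary term again being killed by $k_s(\pm1,t) = 0$, though this is not needed for the present statement.)
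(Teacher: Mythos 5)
Your proof is correct and follows exactly the paper's route: differentiate the length integral using the evolution of the length element \eqref{EQevolareaelement} to get $\int Fk\,ds$, then for (CD) integrate by parts and kill the boundary term with the no-flux condition $k_s(\pm1,t)=0$. The paper's own proof is just a terser version of the same argument.
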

\begin{proof}
Using \eqref{EQevolareaelement} we compute
\[
\frac{d}{dt}\int_{-1}^1 |\gamma_u|\,du
 = \int_{-1}^1
    Fk\,|\gamma_x|\,dx\,.
\]
If $F = k_{ss}$ then
\[
L'(\gamma(\cdot,t)) 
 = -\int_\gamma k_s^2\,ds\,,
\]
as required. Note that we used the boundary conditions in the last step.
\end{proof}

Note that the elastic flow tends to increases length and for the constrained
elastic flow although we have $L(\gamma(\cdot,t)) \le L(\gamma(\cdot,0))$, we
do not have monotonicity of the length in general.

There exists an $\omega\in\R$ satisfying
\begin{equation}
\int_{\gamma} k\, ds\bigg|_{t=0} = 2\omega\pi.
\label{TC}
\end{equation}
In the case where the solution is a family of closed curves, $\omega$ is the winding number of $\gamma(\cdot,0)$.
In the cases we consider here, we continue to call $\omega$ the winding number, although it is no longer
guaranteed to be an integer.
The lemma below shows that, as expected, it is constant along the flow.
Note that it requires the free boundary condition and the flatness of the support lines $\eta_i$.

\begin{lem}
\label{WN}
Suppose $|e|>0$.
Let $\gamma:(-1,1)\times[0,T)\rightarrow\R^2$ be a solution to \eqref{CD} given by Theorem \ref{TMste}.
Set
\[
\int_{\gamma} k\, ds\bigg|_{t=0} = 2\omega\pi.
\]
Then
\[
\int_{\gamma} k\, ds = 2\omega\pi.
\]
In particular, for (CD) flow the average of the curvature $\kav$ increases in
absolute value with velocity
\[
\rd{}{t}|\kav| = \frac{2|\omega|\pi}{L^2}\vn{k_s}_2^2.
\]
\end{lem}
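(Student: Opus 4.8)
The plan is to evaluate $\int_\gamma k\,ds$ as a quantity depending only on the tangent directions at the two endpoints, and then to use the boundary conditions to see that these directions are frozen in time. Since $[-1,1]\times[0,T)$ is simply connected and $\tau=\gamma_s$ is a smooth map into $\S^1$ (smooth up to $\{u=\pm1\}$ and $\{t=0\}$, the solution of Theorem \ref{TMste} being classical), I would first fix a smooth lift $\theta:[-1,1]\times[0,T)\to\R$ with $\gamma_s=(\cos\theta,\sin\theta)$. With the sign convention $\nu=(\tau_2,-\tau_1)$ one computes $k=\IP{\gamma_{ss}}{\nu}=-\theta_s$, whence
\[
\int_\gamma k\,ds \;=\; -\int_{-1}^1 \theta_u\,du \;=\; \theta(-1,t)-\theta(1,t).
\]
Thus the winding number is literally (up to the factor $2\pi$) the turning of the tangent between the two ends, and it suffices to show that $t\mapsto\theta(\pm1,t)$ is constant.

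For this I would use the free boundary condition together with the flatness of the supports, exactly the two ingredients flagged before the statement. Because $\eta_1,\eta_2$ are straight lines, each unit normal $\nu_{\eta_i}$ is a \emph{constant} vector. The orthogonality condition $\IP{\nu}{\nu_{\eta_i}}(\pm1,t)=0$ then confines $\nu(\pm1,t)$ to the fixed one-dimensional subspace orthogonal to $\nu_{\eta_i}$; equivalently, $\theta(\pm1,t)$ lies in a fixed coset $\beta_i+\pi\mathbb{Z}$. Since $t\mapsto\theta(\pm1,t)$ is continuous with values in this discrete set, it is constant: $\theta(\pm1,t)=\theta(\pm1,0)$ for all $t\in[0,T)$. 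Therefore $\int_\gamma k\,ds=\theta(-1,0)-\theta(1,0)=2\omega\pi$ for every $t$. (As a by-product, $\eta_1\parallel\eta_2$ forces $\beta_1-\beta_2\in\pi\mathbb{Z}$, so $2\omega\pi\in\pi\mathbb{Z}$, i.e. $\omega\in\tfrac12\mathbb{Z}$, which is the half-integrality alluded to in the introduction.)

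The ``in particular'' statement is then immediate: along (CD) we have $\kav=\frac1L\int_\gamma k\,ds=\frac{2\omega\pi}{L}$, so differentiating and substituting $L'=-\vn{k_s}_2^2$ from Lemma \ref{LMevol2} gives
\[
\rd{}{t}\kav \;=\; -\,2\omega\pi\,\frac{L'}{L^2} \;=\; \frac{2\omega\pi}{L^2}\,\vn{k_s}_2^2 .
\]
Since $\vn{k_s}_2^2\ge 0$ and $L>0$, this derivative has the sign of $\omega$, so $|\kav|$ is non-decreasing along the flow (and $\kav\equiv0$ when $\omega=0$).

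The one place that deserves care — and which I expect to be the only real subtlety — is that the \emph{naive} route fails: computing directly, $\frac{d}{dt}\int_\gamma k\,ds=\int_\gamma(k_t+k^2F)\,ds=-\int_\gamma F_{ss}\,ds=-(F_s(1,t)-F_s(-1,t))$, and for (CD) this boundary term equals $-(k_{sss}(1,t)-k_{sss}(-1,t))$, which is \emph{not} controlled by the prescribed data $k_s(\pm1,t)=0$. The geometric argument above is precisely what supplies the missing information, and it shows a posteriori that $k_{sss}(\pm1,t)=0$ via $\theta_t=F_s$ (from $\tau_t=-F_s\nu$ in Lemma \ref{LMevo}). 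The only analytic input needed is the existence and smoothness of the angle lift up to the parabolic boundary, which is routine for a classical solution.
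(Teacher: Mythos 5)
Your argument is correct, and it reaches the conclusion by a route that differs from the paper's in an instructive way. The paper takes exactly the ``naive route'' you set aside: it computes $\rd{}{t}\int_\gamma k\,ds=-F_s\big|_{\{0,L\}}$ and then kills the boundary term by differentiating the Neumann condition $\IP{\nu}{e}(\pm1,t)=0$ in time, which via $\nu_t=F_s\tau$ and $\IP{\tau(\pm1,t)}{e}=\pm|e|\neq0$ gives $F_s(\pm1,t)=0$ (this is \eqref{EQkszero}). So that route does not fail --- the missing information is supplied by the same two ingredients you use (free boundary condition plus flatness of the $\eta_i$), just packaged differentially rather than topologically; indeed your observation that $\theta(\pm1,\cdot)$ is locked in a discrete coset is equivalent to $\theta_t(\pm1,t)=0$, i.e.\ to $F_s(\pm1,t)=0$. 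What your version buys: it is independent of the evolution law (any continuous family of curves satisfying \eqref{EQbcs} on straight supports has constant total curvature), and it delivers $\omega\in\tfrac12\mathbb{Z}$ as a free by-product, a fact the paper asserts separately in Remark \ref{RMKomegazero}. What the paper's version buys: the explicit identity $F_s(\pm1,t)=0$ is not just a stepping stone but is reused repeatedly downstream (in Lemma \ref{LMbdy} to start the induction on odd-order boundary derivatives, and in Lemma \ref{KoE} to discard boundary terms), so deriving it en route is efficient; your proof recovers it only a posteriori, as you note. The ``in particular'' computation is identical in both.
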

\begin{proof}
We set the origin to be any point on the line equidistant from the two parallel lines $\eta_i$.
Recall that $e$ is a vector perpendicular to $\eta_i$ and has length equal to the distance between
them.
The Neumann condition is equivalent to
\begin{equation*}
\IP{\nu(\pm1,t)}{e} = 0\,.
\end{equation*}
Differentiating this in time yields
\begin{equation*}
F_{s}(\pm1,t)\IP{\tau(\pm1,t)}{e} = \pm |e| \ F_{s}(\pm1,t) = 0\,.
\end{equation*}
Now $|e| \ne 0$ so we must have that
\begin{equation}
\label{EQkszero}
F_{s}(\pm1,t) = 0\,.
\end{equation}
We compute
\[
\rd{}{t}\int_{\gamma} k\, ds
 = - \int_{\gamma} F_{ss} + Fk^2 - Fk^2\,ds
 = - \int_{\gamma} F_{ss}\, ds
 = - F_{s}|_{\{-1,1\}}
 = 0\,.
\]
This completes the proof.
\end{proof}

Differentiating the boundary conditions can be taken quite far, as the following lemma shows.

\begin{lem}
\label{LMbdy}
Let $\gamma:(-1,1)\times[0,T)\rightarrow\R^2$ be a solution to \eqref{CD} given by Theorem \ref{TMste}.
For all $p\in\N$
\[
k_{s^{(2p+1)}}\Big|_{\{0,L\}} = 0\,.
\]
\end{lem}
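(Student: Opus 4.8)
The plan is to prove the statement by strong induction on $p$, feeding the boundary and Neumann conditions into a time derivative and then using the generic evolution formula of Lemma \ref{LMevo2}. Throughout, $|_{\{0,L\}}$ means evaluation at the two endpoints $u=\pm1$, which are fixed in time; the mechanism driving everything is that once $k_{s^{(2j+1)}}$ is known to vanish at $u=\pm1$ \emph{for all $t$}, its time derivative there vanishes as well, and by Lemmas \ref{LMevol1} and \ref{LMevo2} that time derivative is exactly the quantity $\partial_tk_{s^{(2j+1)}}$ appearing in Lemma \ref{LMevo2}.

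I would dispatch the cases $p=0$ and $p=1$ first. For $p=0$, $k_s|_{\{0,L\}}=0$ is part of the boundary data in \eqref{CD}. For $p=1$, I first recover $F_s|_{\{0,L\}}=0$ exactly as in \eqref{EQkszero}: differentiating the Neumann condition $\IP{\nu(\pm1,t)}{e}=0$ in time and using $\nu_t=F_s\tau$ together with $\IP{\tau(\pm1,t)}{e}=\pm|e|\ne0$ forces $F_s(\pm1,t)=0$. Since $F_s=k_{s^3}$ for (CD) and $F_s=k_{s^3}+\tfrac32k^2k_s$ for (E), and $k_s|_{\{0,L\}}=0$ from the $p=0$ case, in both flows $k_{s^3}|_{\{0,L\}}=0$.

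For the inductive step, fix $p\ge 2$ and assume $k_{s^{(2p'+1)}}|_{\{0,L\}}=0$ for every $0\le p'\le p-1$, i.e.\ $k_{s^m}|_{\{0,L\}}=0$ for all odd $m$ with $1\le m\le 2p-1$. Since $2p-3$ is odd and lies in $[1,2p-1]$, the function $k_{s^{(2p-3)}}$ vanishes at $u=\pm1$ for all $t$, so $\partial_tk_{s^{(2p-3)}}|_{\{0,L\}}=0$. Writing this out with Lemma \ref{LMevo2} for $l=2p-3$,
\[
0 \;=\; \partial_tk_{s^{(2p-3)}}\Big|_{\{0,L\}}
 \;=\; -\,k_{s^{(2p+1)}}\Big|_{\{0,L\}}
 \;+\; \sum_{q+r+u=2p-3}c_{qru}\,k_{s^{(q+2)}}k_{s^r}k_{s^u}\,\Big|_{\{0,L\}}
\]
(with an extra sum $\sum_{q+r+u+v+w=2p-3}c_{qruvw}\,k_{s^q}k_{s^r}k_{s^u}k_{s^v}k_{s^w}$ in the (E) case). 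Because $q+r+u=2p-3$ is odd, at least one of $q,r,u$ is odd: if it is $q$ then $q+2$ is odd with $q+2\le 2p-1$, and if it is $r$ or $u$ then that index is odd with value $\le 2p-3\le 2p-1$. In every case one of the three factors is an odd-order arclength derivative of order $\le 2p-1$, which vanishes at $u=\pm1$ by the inductive hypothesis, so each cubic term vanishes there; the identical parity count (some index odd and $\le 2p-3$) kills every quintic term in the (E) case. Hence $k_{s^{(2p+1)}}|_{\{0,L\}}=0$, closing the induction.

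The point that needs care is why the induction advances by two orders even though the leading term $k_{s^{(2p+1)}}$ of $\partial_tk_{s^{(2p-3)}}$ jumps four orders past $k_{s^{(2p-3)}}$: the potentially troublesome intermediate derivative $k_{s^{(2p-1)}}$ can appear among the lower-order terms only as the factor $k_{s^{(q+2)}}$ with $q=2p-3$ (hence $r=u=0$), and $2p-1\le 2p-1$ means it is already covered by the inductive hypothesis. Once this parity bookkeeping is arranged the rest is routine; smoothness up to the boundary — which legitimises differentiating the boundary relations in $t$ and the repeated use of Lemma \ref{LMevo2} — is supplied by the regularity theory behind Theorem \ref{TMste}.
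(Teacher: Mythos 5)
Your proof is correct and follows essentially the same route as the paper: differentiate the boundary/Neumann conditions in time to get the base cases $k_s=k_{s^3}=0$ at the endpoints, then induct using Lemma \ref{LMevo2} together with the parity observation that every lower-order term in $\partial_t k_{s^{(2p-3)}}$ must contain an odd-order factor already known to vanish. The only cosmetic difference is that the paper also computes $F_{s^3}(\pm1,t)=0$ explicitly to obtain $k_{s^5}=0$ as an additional base case, whereas you recover it from the first inductive step; both are valid.
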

\begin{proof}
Differentiating the no-flux condition $k_s(\pm1,t) = 0$ we find
\begin{equation}
\label{EQnoflux1}
(
 - F_{s^3} - F_sk^2 - 3Fk_sk
)(\pm1,t) = 0\,.
\end{equation}
Substituting the no-flux condition and \eqref{EQkszero} into \eqref{EQnoflux1} we find
\begin{equation}
\label{EQnew1}
F_{s^3}(\pm1,t) = 0\,.
\end{equation}
For the \eqref{CD} flows these imply together with an induction argument that
\begin{equation}
	\label{EQbdy1}
	k_{s^{(2p-1)}}(\pm1,t) = 0
\end{equation}
for $p = 1,2,3$.
For clarity we calculate in each case separately.

{\bf (CD) flow.}
For curve diffusion flow the claim \eqref{EQbdy1} is immediate.

{\bf (FE) flow.}
For the elastic flow we have at the boundary
\[
	0 = F_s = k_{s^3} + \frac32k^2k_s = k_{s^3}
\]
and
\begin{align*}
	0 &= F_{s^3} = k_{s^5} + \frac32(k^2k_s)_{ss}
	\\
	&= k_{s^5} + \frac32(2kk_s^2 + k^2k_{ss})_{s}
	\\
	&= k_{s^5} + \frac32(2k_s^3 + 6kk_sk_{ss} + k^2k_{s^3})
	\\
	&= k_{s^5}\,.
\end{align*}
We conclude again \eqref{EQbdy1}.

Let us give the induction argument.
We assume that for all $p\in{1,\ldots,n}$,
\begin{equation}
\label{EQinduc}
k_{s^{(2p-1)}}(\pm1,t) = 0\,.
\end{equation}
The evolution of the $l$-th derivative of curvature is given by
\begin{align*}
\partial_tk_{s^l}
&= -k_{s^{(l+4)}} + \sum_{q+r+u=l} c_{qru}k_{s^{(q+2)}}k_{s^r}k_{s^u}
 + \sum_{q+r+u+v+w=l} c_{qruvw}k_{s^q}k_{s^r}k_{s^u}k_{s^v}k_{s^w}
 \,,
 \end{align*}
for constants $\hat c_{qru}, c_{qru}, c_{qruvw}\in\R$ with $q,r,u,v,w\geq 0$.
Note that for curve diffusion flow $c_{qruvw} = 0$.

The inductive hypothesis implies that, for $l$ odd and less than or equal to $2n-1$, the
derivative $k_{s^l}$ vanishes on the boundary.
Let's take $l = 2n-3$.
Then we have (evaluating at the boundary)
\begin{align*}
k_{s^{(2n+1)}}
&= - \sum_{2q_1 + 2r_1 + 2u_1 = 2n-3}k_{s^{(2q_1+2)}}*k_{s^{(2r_1)}}*k_{s^{(2u_1)}}
\\
&\quad - \sum_{2q_2+2r_2+2u_2+2v_2+2w_2=2n-3} k_{s^{(2q_2)}}*k_{s^{(2r_2)}}*k_{s^{(2u_2)}}*k_{s^{(2v_2)}}*k_{s^{(2w_2)}}
\,.
\end{align*}
In the above equation we have used $*$ to denote a linear combination of terms with absolute coefficient.
Using the hypothesis \eqref{EQinduc}, we have removed all terms with an odd
number of derivatives of $k$.
Each sum is therefore taken over all $q_i, r_i, u_i, v_i, w_i$ such that $2q_i + 2r_i + 2u_i = 2p-3$ or
$2q_i+2r_i+2u_i+2v_i+2w_i = 2p-3$, which is the empty set.
We conclude
\[
k_{s^{(2n+1)}} = 0\,,
\]
equivalent to \eqref{EQbdy1}, as required.
\end{proof}

We will need the following Sobolev-Poincar\'e-Wirtinger inequalities.
The statements below are given in the arc-length parametrisation, but we note that they continue to hold under any
change of measure, with the appropriate change to $L$.

\begin{lem}
\label{WIp}
Suppose $f:[0,L]\rightarrow\R$, $L>0$, is absolutely continuous and $\int_0^L f\,ds = 0$.
Then
\[
\int_0^L f^2\, ds \le \frac{L^2}{\pi^2}\int_0^L f_s^2 ds\,.
\]
\end{lem}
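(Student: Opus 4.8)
The statement is the classical Poincar\'e--Wirtinger inequality on an interval for functions of mean zero, which is optimally proved by a Fourier series argument. The plan is as follows. First I would reduce to the case $L = \pi$ by the substitution $s = \frac{L}{\pi}t$, which rescales both sides by the same power of $L$ and reduces the claim to: if $g:[0,\pi]\to\R$ is absolutely continuous with $\int_0^\pi g\,dt = 0$, then $\int_0^\pi g^2\,dt \le \int_0^\pi g_t^2\,dt$. Since $f$ is only absolutely continuous (not periodic), the natural basis is the \emph{cosine} system $\{1,\cos t,\cos 2t,\dots\}$, which corresponds to extending $f$ evenly about the endpoints; this even extension is then in $H^1$ of the circle of circumference $2\pi$. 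Alternatively, and perhaps more cleanly, I would expand $g$ in the full Fourier basis on $[0,2\pi]$ after the even reflection $\t g(t) = g(t)$ for $t\in[0,\pi]$, $\t g(t) = g(2\pi - t)$ for $t\in[\pi,2\pi]$, noting $\t g$ is absolutely continuous and $2\pi$-periodic with $\int_0^{2\pi}\t g = 2\int_0^\pi g = 0$, so its zeroth Fourier coefficient vanishes.

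The key step is then Parseval: writing $\t g(t) = \sum_{n\ge 1} (a_n\cos nt + b_n\sin nt)$ (no constant term), one has $\int_0^{2\pi}\t g^2 = \pi\sum_{n\ge1}(a_n^2+b_n^2)$ and, since $\t g$ is absolutely continuous so that term-by-term differentiation is justified in $L^2$, $\int_0^{2\pi}\t g_t^2 = \pi\sum_{n\ge1} n^2(a_n^2+b_n^2) \ge \pi\sum_{n\ge1}(a_n^2+b_n^2)$ because $n^2\ge 1$ for $n\ge 1$. This gives $\int_0^{2\pi}\t g^2 \le \int_0^{2\pi}\t g_t^2$; halving both sides (using the reflection symmetry) returns $\int_0^\pi g^2 \le \int_0^\pi g_t^2$, and undoing the rescaling yields the stated inequality with constant $L^2/\pi^2$.

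I expect the main technical point --- the only place where genuine care is needed --- to be the justification that the Fourier series of the derivative is obtained by term-by-term differentiation, i.e.\ that $\t g \in H^1(\S^1)$ with Fourier coefficients of $\t g_t$ equal to $in\widehat{\t g}(n)$. This is where absolute continuity (rather than mere continuity) is essential: it guarantees $\t g_t \in L^1$ exists a.e., that the fundamental theorem of calculus holds for $\t g$, and hence via integration by parts that $\widehat{\t g_t}(n) = in\,\widehat{\t g}(n)$; Parseval applied to $\t g_t \in L^2$ then closes the argument. A reader wishing to avoid Fourier analysis entirely could instead invoke the variational characterization of the first nonzero Neumann eigenvalue of $-\partial_s^2$ on $[0,L]$, which equals $\pi^2/L^2$ with eigenfunction $\cos(\pi s/L)$; the inequality is exactly the Rayleigh quotient bound on the orthogonal complement of the constants, but the Fourier proof is self-contained and makes the sharp constant transparent. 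Equality holds precisely when $f(s) = c\cos(\pi s/L)$, which the same computation reveals, though it is not needed in the sequel.
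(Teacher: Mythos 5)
Your proof is correct and takes essentially the same route as the paper: both reduce to the periodic case by even reflection of $f$ across an endpoint (the paper reflects onto $[-L,L]$, you onto $[0,2\pi]$ after rescaling) and then invoke the zero-mean Wirtinger inequality on the resulting circle. The only difference is that you supply the Fourier--Parseval proof of that periodic inequality, whereas the paper simply cites it as the standard Poincar\'e inequality.
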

\begin{proof}
Form the new function $g:[-L,L]\rightarrow\R$ defined by $g(x) = f(x)$ for $x>0$ and $g(x) = f(-x)$ for $x\le 0$.
Then $g$ is periodic, since $g(-L) = g(L)$, and furthermore
\[
	\int_{-L}^L g\,ds = 2\int_0^L f\,ds = 0\,.
\]
Applying the standard Poincar\'e inequality to $g$ we find
\[
	\int_{-L}^L g^2\,ds \le \frac{L^2}{\pi^2} \int_{-L}^L g_s^2\,ds\,.
\]
This implies
\[
	2\int_{0}^L f^2\,ds \le \frac{2L^2}{\pi^2} \int_{0}^L f_s^2\,ds\,,
\]
as required.
\end{proof}

\begin{lem}
\label{WI}
Suppose $f:[0,L]\rightarrow\R$, $L>0$, is absolutely continuous and $f(0) = f(L) = 0$.
Then
\[
\int_0^L f^2\, ds \le \frac{L^2}{\pi^2}\int_0^L f_s^2 ds\,.
\]
\end{lem}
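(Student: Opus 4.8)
The statement to prove is Lemma~\ref{WI}: for absolutely continuous $f:[0,L]\to\R$ with $f(0)=f(L)=0$, one has $\int_0^L f^2\,ds \le \frac{L^2}{\pi^2}\int_0^L f_s^2\,ds$. This is the Dirichlet-type Poincaré inequality, and the plan is to mimic the reflection trick already used in the proof of Lemma~\ref{WIp}, but with an \emph{odd} extension rather than an even one.

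First I would define $g:[-L,L]\to\R$ by $g(x)=f(x)$ for $x\ge 0$ and $g(x)=-f(-x)$ for $x<0$. Because $f(0)=0$, this gluing is continuous (in fact absolutely continuous) at $0$, and because $f(L)=f(-(-L))$ and $g(-L)=-f(L)$, the boundary values satisfy $g(L)=f(L)$ and $g(-L)=-f(L)$. This is \emph{not} quite periodic unless $f(L)=0$ — which it is, by hypothesis — so $g(-L)=0=g(L)$ and $g$ extends to a $2L$-periodic absolutely continuous function. Moreover $\int_{-L}^L g\,ds = 0$ automatically by oddness, so the hypothesis of the standard Poincaré (Wirtinger) inequality on the circle of circumference $2L$ is met.

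Then I would apply that standard inequality, $\int_{-L}^L g^2\,ds \le \frac{(2L)^2}{(2\pi)^2}\int_{-L}^L g_s^2\,ds = \frac{L^2}{\pi^2}\int_{-L}^L g_s^2\,ds$. Since $g^2$ and $g_s^2$ are both even functions, $\int_{-L}^L g^2\,ds = 2\int_0^L f^2\,ds$ and $\int_{-L}^L g_s^2\,ds = 2\int_0^L f_s^2\,ds$, and dividing by $2$ gives exactly the claimed bound.

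The only genuine subtlety — the ``main obstacle'' such as it is — is checking that the odd extension $g$ is absolutely continuous across the gluing point $x=0$ and therefore that its distributional derivative is $g_s(x)=f_s(|x|)$ a.e.\ with no singular part; this is where $f(0)=0$ is used, and it is what makes the Wirtinger inequality on the circle legitimately applicable. (One could alternatively invoke the sharp constant $\pi/L$ for the first Dirichlet eigenvalue of $-\partial_s^2$ on $[0,L]$ directly, but the reflection argument keeps the exposition parallel to Lemma~\ref{WIp} and self-contained.) Everything else is a routine change of variables.
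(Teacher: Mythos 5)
Your proof is correct and follows essentially the same route as the paper: an odd reflection of $f$ to $[-L,L]$, which is continuous and periodic precisely because $f(0)=f(L)=0$, followed by the standard Wirtinger inequality on the circle of circumference $2L$ and the observation that $g^2$ and $g_s^2$ are even. The paper's proof is a condensed version of exactly this argument, so nothing further is needed.
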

\begin{proof}
Define $g(s) = f(s)$ for $s\in[0,L]$ and $g(s) = -f(-s)$ for $s\in[-L,0)$. Then $g$ is a continuous odd periodic function.
Since $\int g\,ds = 0$, the standard Poincar\'e inequality implies
\[
2\int_0^L f^2\,ds = \int_{-L}^L g^2\,ds
 \le \frac{4L^2}{4\pi^2} \int_{-L}^L g_s^2\,ds
 = \frac{2L^2}{\pi^2} \int_{0}^L f_s^2\,ds
\,.
\]
\end{proof}

\begin{cor}
\label{LU}
Under the assumptions of Lemma \ref{WI}
\[
\vn{f}_\infty^2 \le \frac{L}{\pi}\vn{f'}_2^2.
\]
Under the assumptions of Lemma \ref{WIp}
\[
\vn{f}_\infty^2 \le \frac{2L}{\pi}\vn{f'}_2^2.
\]
\end{cor}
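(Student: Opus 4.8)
The plan is to reduce both inequalities to a single pointwise estimate of Agmon type, namely $\vn{f}_\infty^2 \le C\,\vn{f}_2\vn{f'}_2$ with $C=1$ in the setting of Lemma \ref{WI} and $C=2$ in the setting of Lemma \ref{WIp}, and then to use the Poincar\'e--Wirtinger inequalities already proved in those lemmas to trade one factor of $\vn{f}_2$ for $\frac{L}{\pi}\vn{f'}_2$.

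First, under the hypotheses of Lemma \ref{WI} (so that $f(0)=f(L)=0$), I would fix $x\in[0,L]$ and express $f(x)^2$ in two ways by the fundamental theorem of calculus, integrating $(f^2)_s = 2ff_s$ from the left endpoint and from the right endpoint:
\[
f(x)^2 = 2\int_0^x ff_s\,ds = -2\int_x^L ff_s\,ds\,.
\]
Adding these and then estimating with the triangle inequality followed by Cauchy--Schwarz gives $f(x)^2 \le \int_0^L|f|\,|f_s|\,ds \le \vn{f}_2\vn{f_s}_2$, so that $\vn{f}_\infty^2 \le \vn{f}_2\vn{f_s}_2$. Feeding in Lemma \ref{WI}, that is $\vn{f}_2\le\frac{L}{\pi}\vn{f_s}_2$, yields $\vn{f}_\infty^2\le\frac{L}{\pi}\vn{f_s}_2^2$, the first claim.

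Next, under the hypotheses of Lemma \ref{WIp} (so that $\int_0^L f\,ds=0$), I would first observe that the vanishing mean together with continuity of $f$ forces a zero $x_0\in[0,L]$; then for any $x$ the same computation started from $x_0$ in place of an endpoint gives $f(x)^2 = 2\int_{x_0}^x ff_s\,ds$, whence $\vn{f}_\infty^2 \le 2\int_0^L|f|\,|f_s|\,ds\le 2\vn{f}_2\vn{f_s}_2$, and Lemma \ref{WIp} then turns this into $\vn{f}_\infty^2\le\frac{2L}{\pi}\vn{f_s}_2^2$. There is no genuine obstacle here: the only points deserving a word of care are that absolute continuity of $f$ is exactly what makes $f^2$ absolutely continuous and thus licenses the fundamental theorem of calculus, and that in the second case one must locate the interior zero $x_0$ before integrating. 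Both are immediate.
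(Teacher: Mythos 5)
Your proof is correct and is essentially the same as the paper's: both cases rest on the identity $f^2(x)=2\int ff_s\,ds$ integrated from a zero of $f$ (the endpoints in the first case, an interior zero guaranteed by the vanishing mean in the second), followed by Cauchy--Schwarz and the Poincar\'e--Wirtinger inequalities of Lemmas \ref{WI} and \ref{WIp}. The only cosmetic difference is that your justification of the two-sided splitting is cleaner than the paper's passing appeal to ``periodicity''.
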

\begin{proof}
First assume $f:[0,L]\rightarrow\R$, $L>0$, is absolutely continuous and $f(0)
= f(L) = 0$.
Periodicity implies
\[
f^2(s) = \int_{0}^s f\,f'\,\,ds - \int_s^{L} f\,f'\,ds\,.
\]
Therefore
\[
f^2(s) \le \int_0^L |f\,f'|\,ds\,.
\]
Now H\"older's inequality and Lemma \ref{WI} above implies
\[
\vn{f}_\infty^2 \le \vn{f}_2\vn{f'}_2 \le \frac{L}{\pi}\vn{f'}_2^2,
\]
as required.

It remains to prove the lemma in the case where $f:[0,L]\rightarrow\R$, $L>0$,
is absolutely continuous and $\int_0^L f\,ds = 0$.
In this case there exists a point $p\in[0,L]$ where $f(p) = 0$.
Absolute continuity implies
\[
f^2(s) = 2\int_p^s f\,f'\,ds
\,.
\]
Therefore
\[
\vn{f}_\infty^2 \le 2\vn{f}_2\vn{f'}_2 \le \frac{2L}{\pi}\vn{f'}_2^2\,.
\]
\end{proof}

We now compute the evolution of $\Ko$ for (CD) flow.

\begin{lem}
\label{KoE}
Let $\gamma:(-1,1)\times[0,T)\rightarrow\R^2$ be a solution to (CD) given by Theorem \ref{TMste}.
Then
\begin{align*}
\rd{}{t}\Ko &+ \Ko\frac{\vn{k_s}_2^2}{L} + 2L\vn{k_{ss}}_2^2
\\
 &= 3L\int_{\gamma} (k-\kav)^2k^2_s ds
   + 6\kav L \int_{\gamma} (k-\kav)k_s^2 ds
   + 2\kav^2 L \vn{k_s}^2_2.
\end{align*}
\end{lem}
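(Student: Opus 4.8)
The plan is to differentiate $\Ko = L\int (k-\kav)^2\,ds$ directly, using the evolution equations from Lemma \ref{LMevo}, Lemma \ref{LMevol2} (for $L'$), and Lemma \ref{WN} (for $\kav'$), and then to integrate by parts repeatedly, exploiting the boundary conditions $k_s(\pm1,t) = 0$, $F_s(\pm1,t) = 0$, and $k_{s^3}(\pm1,t)=0$ from Lemma \ref{LMbdy} to kill all boundary terms. Write $F = k_{ss}$ throughout. First I would split $\frac{d}{dt}\Ko$ into three pieces via the product rule: $L'\int(k-\kav)^2\,ds$, $L\int 2(k-\kav)(k_t - \kav_t)\,ds$, and the contribution from $\partial_t(ds) = kF\,ds$, which gives $L\int (k-\kav)^2 kF\,ds$. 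The first term is immediately $-\frac{\vn{k_s}_2^2}{L}\Ko$ by Lemma \ref{LMevol2}, which is exactly the second term on the left-hand side of the claimed identity. The $\kav_t$ contribution vanishes: $\int 2(k-\kav)\,ds \cdot \kav_t = 0$ since $\int(k-\kav)\,ds = 0$ by definition of $\kav$.

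Next I would handle the main term $2L\int (k-\kav)k_t\,ds = 2L\int(k-\kav)(-F_{ss} - Fk^2)\,ds$ with $F = k_{ss}$, i.e. $2L\int (k-\kav)(-k_{s^4} - k_{ss}k^2)\,ds$, together with the measure term $L\int (k-\kav)^2 k k_{ss}\,ds$. For the $-k_{s^4}$ piece, integrate by parts twice: $-\int (k-\kav)k_{s^4}\,ds = -\int k_{ss}k_{s^4}\,ds = \int k_{s^3}^2\,ds - [\,k_{ss}k_{s^3}\,] = \vn{k_{ss}}_2^2$ after using $(k-\kav)_s = k_s$, then $(k-\kav)_{ss}=k_{ss}$, and the vanishing of $k_s$ and $k_{s^3}$ at the boundary. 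Wait — more carefully, $-\int(k-\kav)k_{s^4} = -[(k-\kav)k_{s^3}] + \int k_s k_{s^3} = \int k_s k_{s^3}$ (boundary term dies since $k_{s^3}=0$ on $\partial$), $= [k_s k_{ss}] - \int k_{ss}^2 = -\vn{k_{ss}}_2^2$ (boundary term dies since $k_s = 0$). So $2L \cdot (-\int(k-\kav)k_{s^4}) = -2L\vn{k_{ss}}_2^2$, which upon moving to the left side produces the $+2L\vn{k_{ss}}_2^2$ term. Good.

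It remains to collect the lower-order terms: $-2L\int(k-\kav)k^2 k_{ss}\,ds + L\int (k-\kav)^2 k\,k_{ss}\,ds$. The strategy here is to integrate by parts in $s$ once more to move the $k_{ss}$ onto the polynomial-in-$k$ factors, writing everything in terms of $k_s^2$, and then to express $k$ as $(k-\kav)+\kav$ systematically so that the result organizes into the three terms on the right-hand side: $3L\int(k-\kav)^2 k_s^2\,ds$, $6\kav L\int(k-\kav)k_s^2\,ds$, and $2\kav^2 L\vn{k_s}_2^2$. Concretely, $-2\int(k-\kav)k^2 k_{ss} = 2\int \partial_s\big((k-\kav)k^2\big)k_s = 2\int(k^2 + 2(k-\kav)k)k_s^2$ and $\int(k-\kav)^2 k k_{ss} = -\int \partial_s\big((k-\kav)^2 k\big) k_s = -\int(2(k-\kav)k + (k-\kav)^2)k_s^2$; summing and substituting $k = (k-\kav)+\kav$, $k^2 = (k-\kav)^2 + 2\kav(k-\kav) + \kav^2$ should, after bookkeeping, collapse to the stated right-hand side. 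I expect the main obstacle to be purely this last bookkeeping step — verifying that all the $(k-\kav)^3 k_s^2$ and cross terms combine with exactly the coefficients $3$, $6\kav$, $2\kav^2$ and that nothing else survives; it is routine but error-prone, and the one genuine subtlety is making sure every integration by parts genuinely has vanishing boundary data, for which Lemma \ref{LMbdy} (the vanishing of $k_{s^3}$, not just $k_s$) is essential.
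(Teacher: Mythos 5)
Your proposal is correct and follows essentially the same route as the paper: product rule on $\Ko = L\int(k-\kav)^2\,ds$, the $\kav_t$ term dying by $\int(k-\kav)\,ds=0$, two integrations by parts on the $-k_{s^4}$ term using $k_{s^3}(\pm1)=0$ and $k_s(\pm1)=0$ to produce $-2L\vn{k_{ss}}_2^2$, and a single integration by parts on the cubic terms followed by the substitution $k=(k-\kav)+\kav$; the final bookkeeping you describe does collapse to the coefficients $3$, $6\kav$, $2\kav^2$ exactly as claimed. (Only disregard your first, stray computation of the $-k_{s^4}$ term --- your ``more carefully'' correction is the right one and matches the paper.)
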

\begin{proof}
First, note that
\begin{align*}
   2L\int_{\gamma} (k-\kav)\big( -k_{s^4} \big)ds
 = -2L\vn{k_{ss}}_2^2
 + 2L[k_sk_{ss}]_{\{-1,1\}}
 - 2L[(k-\kav)k_{s^3}]_{\{-1,1\}}\,.
\end{align*}
From \eqref{EQkszero} in the proof of Lemma \ref{WN}, the last term vanishes.
The no-flux boundary condition means that the second term vanishes.
Therefore,
\[
   2L\int_{\gamma} (k-\kav)\big( -k_{s^4} \big)ds
 = -2L\vn{k_{ss}}_2^2\,.
\]
The no-flux boundary condition also means that
\[
     2L\int_{\gamma} (k-\kav)\big( -k^2k_{ss}
                                  \big)ds
 = 2L\int_\gamma k^2k_s^2\,ds
  + 4L\int_\gamma (k-\kav)kk_s^2\,ds
\]
and
\[
     L\int_{\gamma} kk_{ss}(k-\kav)^2\,ds
 = - L\int_\gamma (k-\kav)^2k_s^2\,ds
   - 2L\int_\gamma k(k-\kav)k_s^2\,ds
\,.
\]
Using these identities, we compute
\begin{align*}
\rd{}{t}\Ko
 &= - \int_{\gamma}k_s^2 ds \int_{\gamma} (k-\kav)^2 ds
   + 2L\int_{\gamma} (k-\kav)\big( -k_{s^4}-k^2k_{ss}
                                  \big)ds
\\
 &\qquad  + L\int_{\gamma} kk_{ss}(k-\kav)^2 ds
\\
 &= - \Ko\frac{\vn{k_s}_2^2}{L} - 2L\vn{k_{ss}}_2^2
    + 2L\int_{\gamma} k^2k_s^2 ds
    + 4L\int_{\gamma} k(k-\kav)k^2_{s} ds
\\
 &\qquad
    - L\int_{\gamma} (k-\kav)^2k_s^2 ds
    - 2L\int_{\gamma} k(k-\kav)k^2_s ds
\\
 &= - \Ko\frac{\vn{k_s}_2^2}{L} - 2L\vn{k_{ss}}_2^2
    + 4L\int_{\gamma} k^2k_s^2 ds
    - 2\kav L\int_{\gamma} kk^2_{s} ds
\\
 &\qquad
    - L\int_{\gamma} (k-\kav)^2k_s^2 ds.
\end{align*}
Rearranging, we have
\begin{align*}
\rd{}{t}\Ko &+ \Ko\frac{\vn{k_s}_2^2}{L} + 2L\vn{k_{ss}}_2^2
\\
 &=   4L\int_{\gamma} k^2k_s^2 ds
    - 2\kav L\int_{\gamma} (k-\kav)k^2_{s} ds
    - 2\kav^2 L\int_{\gamma} k^2_{s} ds
    - L\int_{\gamma} (k-\kav)^2k_s^2 ds
\\
 &=   3L\int_{\gamma} (k-\kav)^2k_s^2 ds
    + 6\kav L\int_{\gamma} (k-\kav)k^2_{s} ds
    + 2\kav^2 L\int_{\gamma} k^2_{s} ds.
\end{align*}
This proves the lemma.
\end{proof}

\section{Curvature estimates in $L^2$ and global analysis}

In this section we describe how the evolution equations can be used to prove
a-priori curvature estimates, and then how these estimates imply global
existence and convergence.
A fundamental observation is that for the curve diffusion flow Lemma
\ref{LMevol2} and Lemma \ref{WIp} together imply $\Ko\in L^1([0,T))$.

\begin{lem}
\label{FO}
Let $\gamma:(-1,1)\times[0,T)\rightarrow\R^2$ be a solution to (CD) given by
Theorem \ref{TMste}.
Then $\vn{\Ko}_1 < L^4(0)/4\pi^2$.
\end{lem}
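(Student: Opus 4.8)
The plan is to bound $\Ko$ pointwise in time by a constant multiple of $-\tfrac{d}{dt}L^4$ and then integrate. The two ingredients are the Poincar\'e--Wirtinger inequality of Lemma \ref{WIp} and the length-decay identity $L' = -\vn{k_s}_2^2$ from Lemma \ref{LMevol2}.

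First I would observe that, at each fixed time, $k-\kav$ is absolutely continuous with zero arc-length average --- this is immediate from the definition of $\kav$, since $\int_\gamma (k-\kav)\,ds = \int_\gamma k\,ds - \kav L = 0$ --- and that $(k-\kav)_s = k_s$, because $\kav$ carries no spatial dependence. Applying Lemma \ref{WIp} with $f = k-\kav$ therefore gives
\[
\int_\gamma (k-\kav)^2\,ds \le \frac{L^2}{\pi^2}\,\vn{k_s}_2^2\,,
\]
and multiplying through by $L$ yields the pointwise-in-time estimate $\Ko \le \tfrac{L^3}{\pi^2}\vn{k_s}_2^2$.

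The second step inserts the length evolution. By Lemma \ref{LMevol2} (whose proof uses the free boundary conditions) we have $\vn{k_s}_2^2 = -L'(\gamma(\cdot,t))$ along (CD), so
\[
\Ko(\gamma(\cdot,t)) \le -\frac{L^3}{\pi^2}\,L'(\gamma(\cdot,t)) = -\frac{1}{4\pi^2}\frac{d}{dt}\Big(L^4(\gamma(\cdot,t))\Big)\,.
\]
Integrating over $[0,t]$ for $t<T$ gives $\int_0^t \Ko\,d\sigma \le \tfrac{1}{4\pi^2}\big(L^4(0) - L^4(\gamma(\cdot,t))\big)$. Since $\Ko \ge 0$, letting $t\nearrow T$ (monotone convergence) and discarding the nonnegative subtracted term yields $\vn{\Ko}_1 \le \tfrac{1}{4\pi^2}L^4(0)$; the inequality is strict because the length remains bounded below along the flow (it never drops below the spacing $|e|$ of the supporting lines).

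I do not anticipate a genuine obstacle. The only points needing care are verifying the hypotheses of Lemma \ref{WIp} for $k-\kav$ and recognising the integrating-factor identity $L^3 L' = \tfrac14(L^4)'$; it is precisely the latter that produces the sharp constant $4\pi^2$, rather than the $\pi^2$ one would obtain by crudely bounding $L(t)\le L(0)$ before integrating.
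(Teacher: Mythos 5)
Your proposal is correct and follows essentially the same route as the paper: apply Lemma \ref{WIp} to $f = k-\kav$ to get $\Ko \le \frac{L^3}{\pi^2}\vn{k_s}_2^2 = -\frac{1}{4\pi^2}\frac{d}{dt}L^4$, then integrate in time. Your extra remarks on verifying the hypotheses of Lemma \ref{WIp} and on the source of the strict inequality (the length staying bounded below) are sound and merely make explicit what the paper leaves implicit.
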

\begin{proof}
Applying Lemma \ref{WIp} with $f=k-\kav$ and recalling Lemma \ref{LMevol2} we have
\[
\Ko \le \frac{L^3}{\pi^2}\vn{k_s}_2^2 = - \frac{1}{4\pi^2}\rd{}{t}L^4,
\]
so
\[
\int_0^t\Ko\, d\tau \le \frac{L^4(0)}{4\pi^2} - \frac{L^4(t)}{4\pi^2}.
\qedhere
\]
\end{proof}

The above lemma holds regardless of initial data, in particular regardless of
the winding number, indicating that the quantity $\Ko$ is a natural energy for
the curve diffusion flow with free boundary, just as it is for the curve
diffusion flow of closed curves \cite{W13}.

\begin{rmk}
A similar argument as above also shows that $\vn{k_s}^2_2 \in L^1([0,T))$ with the estimate
$\vn{\vn{k_s}_2^2}_1 \le L(0)$, suggesting that $\vn{k_s}^2_2$ is another well-behaved quantity under the
flow.
\end{rmk}


We begin our study of the free elastic flow by proving that there exists a
critical energy level $\pi$ such that while the oscillation of curvature is below
$\pi$ length is monotone.

\begin{lem}
\label{LMlengthestimate}
Let $\gamma:(-1,1)\times[0,T)\rightarrow\R^2$ be a solution to (FE)
given by Theorem \ref{TMste}.
Assume that for all $t\in[0,T)$
\begin{equation}
\label{EQvanillaEhypothesis}
L\int_\gamma k^2\,ds \le \pi\,.
\end{equation}
Then $\omega = 0$ and
\[
	L(\gamma(\cdot,t)) \le L(\gamma(\cdot,0))\,,\quad\text{for all}\quad t\in[0,T)\,.
\]
\end{lem}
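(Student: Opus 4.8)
The plan is to control the winding number first and then show that under the hypothesis the length derivative is nonpositive. For the winding number, I would argue exactly as in Remark \ref{RMKomegazero}: by Cauchy--Schwarz,
\[
(2\omega\pi)^2 = \Big(\int_\gamma k\,ds\Big)^2 \le L\int_\gamma k^2\,ds \le \pi\,,
\]
whence $\omega^2 \le 1/(4\pi) < 1/4$. Since the free boundary condition forces $\omega$ to be a half-integer (this is the content of the boundary analysis leading to Lemma \ref{WN}; the tangent vector turns through a multiple of $\pi$ because $\nu$ is orthogonal to the fixed parallel directions at both endpoints), we get $\omega = 0$. By Lemma \ref{WN}, $\omega$ is preserved, so $\int_\gamma k\,ds \equiv 0$ and hence $\kav \equiv 0$ along the flow; in particular $\Ko = L\vn{k}_2^2$, so the hypothesis \eqref{EQvanillaEhypothesis} is exactly $\Ko \le \pi$ along the flow.

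Next I would compute $L'$ along (E). By Lemma \ref{LMevol2}, $L'(\gamma(\cdot,t)) = \int Fk\,ds$ with $F = k_{ss} + \tfrac12 k^3$, so
\[
L'(\gamma(\cdot,t)) = \int_\gamma \Big(kk_{ss} + \tfrac12 k^4\Big)\,ds = -\int_\gamma k_s^2\,ds + \tfrac12\int_\gamma k^4\,ds\,,
\]
using integration by parts and the no-flux boundary condition $k_s(\pm1) = 0$. So it suffices to show $\tfrac12\int_\gamma k^4\,ds \le \int_\gamma k_s^2\,ds$ whenever $\Ko = L\vn{k}_2^2 \le \pi$. Since $\kav = 0$, I would estimate $\vn{k}_\infty^2 \le \tfrac{2L}{\pi}\vn{k_s}_2^2$ by Corollary \ref{LU} (the Wirtinger case, applicable because $\int_\gamma k\,ds = 0$), and then
\[
\tfrac12\int_\gamma k^4\,ds \le \tfrac12\vn{k}_\infty^2\int_\gamma k^2\,ds \le \tfrac12\cdot\tfrac{2L}{\pi}\vn{k_s}_2^2\cdot\vn{k}_2^2 = \frac{L\vn{k}_2^2}{\pi}\,\vn{k_s}_2^2 = \frac{\Ko}{\pi}\,\vn{k_s}_2^2\,.
\]
Thus $L'(\gamma(\cdot,t)) \le \big(\tfrac{\Ko}{\pi} - 1\big)\vn{k_s}_2^2 \le 0$ under the hypothesis, which gives $L(\gamma(\cdot,t)) \le L(\gamma(\cdot,0))$ for all $t \in [0,T)$.

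The main obstacle — really the only delicate point — is confirming that the half-integrality of $\omega$ persists in this free-boundary, immersed setting; this is where the flatness of the $\eta_i$ and the orthogonality condition $\IP{\nu}{\nu_{\eta_i}}(\pm1) = 0$ are essential, and it is exactly the input already packaged into Lemma \ref{WN}. Everything else is a short chain of integrations by parts (discarding boundary terms via $k_s(\pm1) = 0$) together with the sharp Wirtinger/Sobolev constants from Corollary \ref{LU}; the choice of the threshold $\pi$ in \eqref{EQvanillaEhypothesis} is precisely what makes the coefficient $\tfrac{\Ko}{\pi} - 1$ nonpositive.
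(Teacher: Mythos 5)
Your proof is correct and follows essentially the same route as the paper's: the winding number is forced to vanish by Cauchy--Schwarz and half-integrality (as in Remark \ref{RMKomegazero}), and then $L'\le -\vn{k_s}_2^2\big(1-\tfrac{L}{\pi}\vn{k}_2^2\big)\le 0$ via the $\vn{k}_\infty^2\le\tfrac{2L}{\pi}\vn{k_s}_2^2$ estimate from Corollary \ref{LU}. Your explicit remark that the Wirtinger case of Corollary \ref{LU} applies precisely because $\omega=0$ gives $\int_\gamma k\,ds=0$ is a point the paper leaves implicit, but the argument is the same.
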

\begin{proof}
Let us first show that $\omega = 0$.
Note that Lemma \ref{WN} implies that $\omega$ is constant along the flow, and so it suffices to show that $\omega = 0$ at initial time.
A calculation analogous to that in Remark \ref{RMKomegazero} shows that
\[
\omega \le \frac{1}{2\sqrt{\pi}}
\,.
\]
As $\omega$ is an integer multiple of $\frac12$, it must be zero.

Therefore by Lemma \ref{LU}, we have
\[
	\vn{k}_\infty^2 \le \frac{2L}{\pi}\vn{k_s}_2^2
	\,.
\]
Using this we calculate
\begin{align*}
	L'(\gamma(\cdot,t)) &= \int_\gamma kk_{ss} + \frac12k^4 \,ds
	\\
	&= -\vn{k_s}_2^2 + \frac12\vn{k}_4^4
	\\
	&\le -\vn{k_s}_2^2 + \frac12\vn{k}_\infty^2\vn{k}_2^2
	\\
	&\le -\vn{k_s}_2^2\Big(1-\frac{L}{\pi}\vn{k}_2^2\Big)
	\\
	&\le 0\,.
\end{align*}
The claim follows.
\end{proof}

We note that the definition of the elastic flow implies $\vn{k}_2^2$ is
monotone decreasing, and so with Lemma \ref{LMlengthestimate} the product is
also decreasing. This product, in the setting of $\omega = 0$, is $\Ko$, the
oscillation of curvature.

\begin{lem}
	\label{LMkoscestimate}
Let $\gamma:(-1,1)\times[0,T)\rightarrow\R^2$ be a solution to (FE)
given by Theorem \ref{TMste}.
Assume that for $K\in\R$ we have
\begin{equation}
\label{EQvanillaEhypothesis00}
L(0)\int_\gamma k^2\,ds\bigg|_{t=0} \le K \le \pi\,.
\end{equation}
Then for all $t\in[0,T)$
\[
	\Ko(\gamma(\cdot,t)) \le \Ko(0) = K\,.
\]
\end{lem}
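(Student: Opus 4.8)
The plan is to combine the two facts that have just been established. Under the hypothesis \eqref{EQvanillaEhypothesis00}, at the initial time we have $L(\gamma_0)\vn{k}_2^2(\gamma_0) \le K \le \pi$, which puts us in a position to run the bootstrap of Lemma \ref{LMlengthestimate}: so long as the inequality $L\vn{k}_2^2 \le \pi$ persists, length is non-increasing, and since the elastic flow always has $\vn{k}_2^2$ non-increasing (this is just $E' \le 0$, which follows from (E) being the $L^2$-gradient flow of $E$, or directly from $\frac{d}{dt}\int k^2\,ds = -2\int F^2\,ds \le 0$), the product $L\vn{k}_2^2$ is itself non-increasing, hence stays $\le K \le \pi$. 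This is the standard continuity/bootstrap argument: let $t^* = \sup\{t : L(\gamma(\cdot,\tau))\vn{k}_2^2(\gamma(\cdot,\tau)) \le \pi \text{ for all } \tau \le t\}$; on $[0,t^*)$ Lemma \ref{LMlengthestimate} applies so $L$ is non-increasing there, combined with monotonicity of $\vn{k}_2^2$ the product is non-increasing there, so it never exceeds $K < \pi$ (or $=\pi$), forcing $t^* = T$.

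Having secured that $L\vn{k}_2^2 \le K \le \pi$ for all $t\in[0,T)$, Lemma \ref{LMlengthestimate} also tells us $\omega = 0$ for all time. With $\omega = 0$ we have $\kav = \frac{1}{L}\int_\gamma k\,ds = \frac{2\omega\pi}{L} = 0$, so the oscillation of curvature simplifies: $\Ko(\gamma(\cdot,t)) = L\int_\gamma (k-\kav)^2\,ds = L\int_\gamma k^2\,ds = L(\gamma(\cdot,t))\vn{k}_2^2(\gamma(\cdot,t))$. Thus $\Ko$ coincides with the product $L\vn{k}_2^2$, which we have just shown is non-increasing in $t$. Evaluating at $t=0$ gives $\Ko(\gamma(\cdot,t)) \le \Ko(\gamma_0) = L(\gamma_0)\vn{k}_2^2(\gamma_0)$, and the hypothesis \eqref{EQvanillaEhypothesis00} together with the fact that $\omega=0$ (so $\Ko(\gamma_0)$ is exactly this product, not merely bounded by it via Cauchy--Schwarz) gives $\Ko(\gamma_0) \le K$; but in fact, since the statement of the lemma writes $\Ko(\gamma_0) = K$, one reads \eqref{EQvanillaEhypothesis00} as defining $K := L(\gamma_0)\vn{k}_2^2(\gamma_0)$, so equality is automatic and the conclusion is simply monotone decrease of $\Ko$ from its initial value.

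I expect the only genuinely substantive point — everything else being bookkeeping — to be the clean articulation of the bootstrap: one must be slightly careful that Lemma \ref{LMlengthestimate} requires \eqref{EQvanillaEhypothesis} to hold for \emph{all} $t\in[0,T)$, whereas a priori we only know it at $t=0$. The resolution is the standard one: the set of times on which the bound holds is closed (by continuity of $t\mapsto L(\gamma(\cdot,t))\vn{k}_2^2(\gamma(\cdot,t))$, which in turn uses smoothness of the flow from Theorem \ref{TMste}) and relatively open (if the product is $\le \pi$ at $t_0$ then on a neighbourhood Lemma \ref{LMlengthestimate} — applied on a slightly smaller time interval, or rather the differential inequality $L' \le 0$ derived in its proof, which only needs the pointwise bound — keeps it from increasing), hence all of $[0,T)$. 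Alternatively, and more cleanly, one reruns the proof of Lemma \ref{LMlengthestimate} directly: the chain of inequalities there produces $L'(\gamma(\cdot,t)) \le 0$ at any time $t$ at which $L\vn{k}_2^2 \le \pi$, and since $\frac{d}{dt}(L\vn{k}_2^2) = L'\vn{k}_2^2 + L\frac{d}{dt}\vn{k}_2^2 \le 0$ at such times (both summands being $\le 0$), the product can never first exceed $\pi$. This gives the result without invoking Lemma \ref{LMlengthestimate} as a black box on the full interval.
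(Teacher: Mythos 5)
Your proposal is correct and follows essentially the same route as the paper: the authors likewise combine $L'\le 0$ (from the calculation in Lemma \ref{LMlengthestimate}) with $\frac{d}{dt}\vn{k}_2^2 = -2\int F^2\,ds \le 0$ to get $\frac{d}{dt}\Ko \le 0$ while $\Ko\le K\le\pi$, then close the loop with the observation that this holds at $t=0$ and persists. Your explicit articulation of the open/closed bootstrap is merely a more careful rendering of the paper's one-line "since this is true at initial time, it remains true."
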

\begin{proof}
By the calculation in the proof of Lemma \ref{LMlengthestimate} we have
\[
	L'(0) \le 0\,.
\]
We calculate
\begin{align*}
	\Big(\frac{d}{dt}\Ko\Big)(\gamma(\cdot,0))
	&=  	L'(0)\int_{\gamma_0} k^2\,ds
	- 2L(0)\int_{\gamma_0} \Big|k_{ss} + \frac12k^3\Big|^2\,ds
	\le 0\,.
\end{align*}
Therefore both $L$ and $\Ko$ are decreasing while $\Ko \le K \le \pi$. Since
this is true at initial time, it remains true, and we conclude the estimate.
\end{proof}

In fact we can obtain much more, bounding also $\vn{k_s}_2^2$ a-priori.

\begin{lem}
	\label{LMdsdecay0}
Let $\gamma:(-1,1)\times[0,T)\rightarrow\R^2$ be a solution to (FE)
given by Theorem \ref{TMste}.
Assume that
\begin{equation}
\label{EQvanillaEhypothesis0}
L(0)\int_\gamma k^2\,ds\bigg|_{t=0} \le \frac{4\pi}{7}\,.
\end{equation}
Then for all $t\in[0,T)$
\[
	\int_\gamma k_s^2\,ds
	\le \frac{3L_0^2K_1}{K_1t + 3L_0^2}
\]
where $L_0 = L(\gamma(\cdot,0))$ and
\[
	K_1 = \int_\gamma k_s^2\,ds\bigg|_{t=0}\,.
\]
\end{lem}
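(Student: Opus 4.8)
The plan is to derive a closed differential inequality for $\vn{k_s}_2^2$ and integrate it. Since $\tfrac{4\pi}{7}<\pi$, the hypothesis \eqref{EQvanillaEhypothesis0} puts us in the regime of the two preceding lemmas: Lemma~\ref{LMlengthestimate} gives $\omega=0$ — hence $\int_\gamma k\,ds=0$, $\kav\equiv 0$, and the zero-average Poincar\'e--Wirtinger inequalities (Lemma~\ref{WIp}, Corollary~\ref{LU}) apply directly to $k$ — together with $L(\gamma(\cdot,t))\le L_0$ for all $t$; and Lemma~\ref{LMkoscestimate}, applied with $K=\tfrac{4\pi}{7}$, gives $\Ko(\gamma(\cdot,t))=L\vn{k}_2^2\le\tfrac{4\pi}{7}$ for all $t$. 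These are the only features of the smallness assumption that the argument uses.

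First I would compute the evolution of $\vn{k_s}_2^2$ along (E). Using Lemma~\ref{LMevo} for $k_{st}$ with $F=k_{ss}+\tfrac12k^3$ together with \eqref{EQevolareaelement},
\[
\rd{}{t}\int_\gamma k_s^2\,ds = 2\int_\gamma k_s k_{st}\,ds + \int_\gamma k_s^2\,kF\,ds\,.
\]
Integrating by parts repeatedly, discarding every boundary term via $k_s(\pm1,t)=0$ from \eqref{EQbcs} and $k_{s^3}(\pm1,t)=0$ from Lemma~\ref{LMbdy}, and using $\int_\gamma kk_s^2k_{ss}\,ds=-\tfrac13\vn{k_s}_4^4$, this collapses to a relation of the form
\[
\rd{}{t}\int_\gamma k_s^2\,ds = -2\vn{k_{s^3}}_2^2 - c_1\vn{k_s}_4^4 + c_2\int_\gamma k^2 k_{ss}^2\,ds - c_3\int_\gamma k^4 k_s^2\,ds\,,
\]
with explicit constants $c_1,c_2,c_3>0$. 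Every term except $c_2\int_\gamma k^2 k_{ss}^2\,ds$ has a favourable sign.

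The heart of the proof is to absorb the one bad term. Integrating by parts once gives $\int_\gamma k^2 k_{ss}^2\,ds=\tfrac23\vn{k_s}_4^4-\int_\gamma k^2 k_s k_{s^3}\,ds$; estimating the last integral by Cauchy--Schwarz and then Young's inequality — with the weight chosen so that the resulting multiple of $\int_\gamma k^4 k_s^2\,ds$ is exactly reabsorbed into $-c_3\int_\gamma k^4 k_s^2\,ds$ — leaves a combination of $\vn{k_{s^3}}_2^2$ and $\vn{k_s}_4^4$. The residual $\vn{k_s}_4^4$ is then controlled through the interpolation chain
\[
\vn{k_s}_4^4 \le \vn{k_s}_\infty^2\,\vn{k_s}_2^2 \le \tfrac14 L\,\vn{k_{ss}}_2^2\,\vn{k_s}_2^2 \le \tfrac14 L\,\vn{k}_2^2\,\vn{k_{s^3}}_2^2 = \tfrac14\Ko\,\vn{k_{s^3}}_2^2\,,
\]
the second inequality being the sharp Dirichlet embedding $\vn{k_s}_\infty^2\le\tfrac14 L\vn{k_{ss}}_2^2$ (available since $k_s(\pm1)=0$), the third following from two integrations by parts with Cauchy--Schwarz (which give $\vn{k_{ss}}_2^2\vn{k_s}_2^2\le\vn{k}_2^2\vn{k_{s^3}}_2^2$). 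Because $\Ko\le\tfrac{4\pi}{7}$, this residual is strictly smaller than the budget left in $-2\vn{k_{s^3}}_2^2$, and the whole right-hand side collapses to $-c_0\vn{k_{s^3}}_2^2$ for some $c_0>0$. This is exactly where the numerical value $\tfrac{4\pi}{7}$ is consumed, and I expect the bookkeeping of the interpolation constants in this step to be the main obstacle.

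Finally I would close the argument as an ODE inequality. Rearranging the chain above gives $\vn{k_{s^3}}_2^2\ge\tfrac{4}{\Ko}\vn{k_s}_4^4\ge\tfrac{7}{\pi}\vn{k_s}_4^4$, and H\"older's inequality gives $\vn{k_s}_4^4\ge L^{-1}\vn{k_s}_2^4\ge L_0^{-1}\vn{k_s}_2^4$. Combining these with $\rd{}{t}\vn{k_s}_2^2\le-c_0\vn{k_{s^3}}_2^2$ produces
\[
\rd{}{t}\vn{k_s}_2^2 \le -c\,\big(\vn{k_s}_2^2\big)^2\,,\qquad c=c(L_0)>0\,.
\]
Since $\rd{}{t}\big(\vn{k_s}_2^{-2}\big)=-\vn{k_s}_2^{-4}\,\rd{}{t}\vn{k_s}_2^2\ge c$, integrating from $0$ yields $\vn{k_s}_2^{-2}(t)\ge K_1^{-1}+ct$, i.e. $\vn{k_s}_2^2(t)\le K_1/(1+cK_1 t)$, which is a bound of the claimed rational form; tracking the constant gives the stated inequality. (If $\vn{k_s}_2^2$ vanishes at some time the estimate holds trivially afterwards.)
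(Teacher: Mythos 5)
Your overall strategy is the same as the paper's --- differentiate $\vn{k_s}_2^2$ along (E), absorb the bad cubic term using smallness of $\Ko$, and finish with the ODE comparison $\phi'\le-C\phi^2$ --- but the specific absorption scheme you propose does not close at the stated threshold $\tfrac{4\pi}{7}$, and this is exactly the bookkeeping you flagged as the main obstacle. The paper's computation gives $\rd{}{t}\vn{k_s}_2^2=-2\vn{k_{s^3}}_2^2+3\int k^2k_{ss}^2-\tfrac13\int k_s^4-2\int k^2k_sk_{s^3}-\tfrac72\int k^4k_s^2$; eliminating the cross term via $\int k^2k_sk_{s^3}=\tfrac23\int k_s^4-\int k^2k_{ss}^2$ forces your constants to be $(c_1,c_2,c_3)=(\tfrac53,5,\tfrac72)$. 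Undoing that integration by parts, as you then propose, reintroduces the cross term with coefficient $5$ rather than $2$; Young's inequality tuned to absorb all of $-\tfrac72\int k^4k_s^2$ then costs $\tfrac{25}{14}\vn{k_{s^3}}_2^2$ (and this is the minimum possible cost), leaving $+\tfrac53\vn{k_s}_4^4$ against a remaining budget of only $2-\tfrac{25}{14}=\tfrac{3}{14}$ on $\vn{k_{s^3}}_2^2$. Even with your sharp interpolation $\vn{k_s}_4^4\le\tfrac14\Ko\vn{k_{s^3}}_2^2$, closure requires $\tfrac{5}{12}\Ko<\tfrac{3}{14}$, i.e.\ $\Ko<\tfrac{18}{35}\approx0.51$, well below $\tfrac{4\pi}{7}\approx1.80$.

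The paper avoids this loss by never inflating the cross term: it keeps $-2\int k^2k_sk_{s^3}$ as is and pairs it with $-\tfrac72\int k^4k_s^2$ via Young at a cost of only $\tfrac27\vn{k_{s^3}}_2^2$, and bounds the remaining $3\int k^2k_{ss}^2\le3\vn{k_{ss}}_\infty^2\vn{k}_2^2\le\tfrac{3}{\pi}\Ko\vn{k_{s^3}}_2^2$ using the sup-norm estimate of Corollary \ref{LU}; at $\Ko=\tfrac{4\pi}{7}$ this exactly exhausts the budget $2-\tfrac27=\tfrac{12}{7}$. A consequence is that there is no leftover negative multiple of $\vn{k_{s^3}}_2^2$ to drive the decay, so your endgame --- extracting $\phi^2$ from a residual $-c_0\vn{k_{s^3}}_2^2$ via $\vn{k_{s^3}}_2^2\ge\tfrac{4}{\Ko}\vn{k_s}_4^4$ --- is also unavailable at this energy level; the paper instead obtains the quadratic decay from the separately retained term $-\tfrac13\int k_s^4\le-\tfrac{1}{3L}\vn{k_s}_2^4$. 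To repair your argument you would either have to adopt the paper's pairing or accept a strictly smaller smallness threshold than the one in the statement.
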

\begin{proof}
	We calculate
	\begin{align*}
		\frac{d}{dt}\int_\gamma k_s^2\,ds
		&= \int_\gamma k_s(-2F_{s^3} - 2F_sk^2 - 6Fk_sk + k_skF)\,ds
		\\
		&= \int_\gamma -2k_{s^3}F_{s} - 2k_sk^2F_s - 5k_s^2kF \,ds
		\\
		&= -\int_\gamma k_{s^3}(2k_{s^3} + 3k^2k_s) \,ds
		\\&\qquad
		+ \int_\gamma (k_sk^2)_s(2k_{ss} + k^3)\,ds
		\\&\qquad
		- \frac52\int_\gamma k_s^2k(2k_{ss} + k^3)\,ds
		\\
		&= - 2\vn{k_{s^3}}_2^2
		+ 3\int_\gamma k_{ss}(k^2k_{ss} + 2kk_s^2) \,ds
		\\&\qquad
		- 2\int_\gamma (k_sk^2)(k_{s^3}) \,ds
		- 3\int_\gamma k_s^2k^4\,ds
		\\&\qquad
		- \frac53\int_\gamma (k_{s}^3)_sk\,ds
		- \frac52\int_\gamma k_s^2k^4\,ds
		\\
		&= - 2\vn{k_{s^3}}_2^2
		+ 3\int_\gamma k^2k_{ss}^2\,ds
		- \frac13\int_\gamma k_s^4 \,ds
		\\*&\qquad
		- 2\int_\gamma (k_sk^2)(k_{s^3}) \,ds
		- \frac{11}2\int_\gamma k_s^2k^4\,ds\,.
	\end{align*}
Applying the estimate $2|(k_sk^2)(k_{s^3})| \le \frac2{11}k_{s^3}^2 + \frac{11}2k_s^2k^4$, and using Corollary 2.9 as well as the H\"older inequality and the length bound we find
	\begin{align*}
		\frac{d}{dt}\int_\gamma k_s^2\,ds
		&\le -\Big(2-\frac2{11}\Big)\vn{k_{s^3}}_2^2
		+ 3\int_\gamma k^2k_{ss}^2\,ds
		- \frac13\int_\gamma k_s^4 \,ds
		\\&\le
		-\frac{19}{11}\vn{k_{s^3}}_2^2
		+ 3\vn{k_{ss}}_\infty^2 \int_\gamma k^2\,ds
		- \frac13\int_\gamma k_s^4 \,ds
		\\&\le
		-\frac{19}{11}\vn{k_{s^3}}_2^2
		+ \frac{6}{\pi}\Ko\vn{k_{s^3}}_2^2
		- \frac{1}{3L}\vn{k_s}_2^4
		\\&\le
		-\bigg(\frac{19}{11} - \frac{6}{\pi}\Ko\bigg)\vn{k_{s^3}}_2^2
		- \frac{1}{3L_0}\vn{k_s}_2^4
		\,.
	\end{align*}
Now observe that if $\Ko \le \varepsilon \le \pi$ initially then this is by Lemma \ref{LMkoscestimate} preserved.
Using this with $\varepsilon = \frac{\pi}{6}\frac{19}{11} = \frac{19\pi}{66}$, we obtain for $\phi = \vn{k_s}_2^2$, $C = \frac{1}{3L_0}$,
\[
	\phi' \le -C\phi^2
\]
which implies
\[
	\phi(t) \le \frac{\phi(0)}{C\phi(0)t+1}
\]
as required.
\end{proof}

Unfortunately, the hypothesis \eqref{EQvanillaEhypothesis0} is stronger than we
would like; it is significantly worse than assuming simply
\eqref{EQvanillaEhypothesis00}, which is what is required for our a-priori
length estimate.

Furthermore, blowup analysis indicates that we may bypass the estimate entirely.
Assuming that $\vn{k_s}_2^2(t_j)\rightarrow\infty$ for a sequence of times
$t_j\rightarrow T < \infty$ we consider a sequence of rescalings $\gamma_j(x,t)
= r_j\gamma(x,t_j+r_j^{-4}t)$.
The sequence $\{r_j\}$ is chosen such that $\vn{k_s}_{2,B_1}^2 = 1$.
By a standard compactness theorem and local estimates for the flow we extract a
limit $\gamma_\infty$ which is an entire elastic flow.
Now this limit has $\vn{k_s}_2^2 \ge 1$ by construction, however by scale
invariance and monotonicity of $\Ko$ it is an elastica.
Elasticae can exist at a variety of energy levels, however if $\Ko$ is finite
for an entire curve, then it can only be zero and we have a straight line,
contradicting the concentration of $\int k_s^2\,ds$.
This is not a proof, primarily because $\Ko$ does not converge to anything
along a blowup sequence and makes no sense on an entire curve as the length is
infinite.
However it does clearly suggest that we can do better than
\eqref{EQvanillaEhypothesis0}.

It is our goal to prove that \eqref{EQvanillaEhypothesis00} is sufficient.
First, the interpolation approach of Dziuk-Kuwert-Sch\"atzle \cite{DKS02} allows us to establish
our first basic a-priori estimate with the weaker smallness assumption \eqref{EQvanillaEhypothesis00}.

\begin{lem}
\label{LMksestimate}
Let $\gamma:(-1,1)\times[0,T)\rightarrow\R^2$ be a solution to (FE)
given by Theorem \ref{TMste}.
Assume that
\begin{equation*}
L(0)\int_\gamma k^2\,ds\bigg|_{t=0} \le \pi\,.
\end{equation*}
Then there exists a universal constant $C\in(0,\infty)$ such that for all $t\in[0,T)$
	\[
		\int_\gamma k_s^2\,ds \le \int_\gamma k_s^2\,ds\bigg|_{t=0} + C
		\,.
	\]
\end{lem}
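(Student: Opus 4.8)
The plan is to derive a differential inequality of the shape $\frac{d}{dt}\vn{k_s}_2^2 + \vn{k_{s^3}}_2^2 \le C_0$, with $C_0$ depending only on $L(\gamma_0)$ and $|e|$, and then to turn this into the stated bound using the Poincar\'e inequalities together with the fact that the length is trapped in $[|e|,L(\gamma_0)]$. First I would record what the hypothesis buys. By Lemma \ref{LMlengthestimate}, $\omega = 0$ --- hence $\kav\equiv 0$, $\int_\gamma k\,ds = 0$, and, since also $\int_\gamma k_{ss}\,ds = 0$ by the no-flux condition, Lemmas \ref{WIp} and \ref{WI} apply to $k$, $k_s$ and $k_{ss}$ --- and $L$ is non-increasing, so $|e| \le L \le L_0 := L(\gamma_0)$. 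By Lemma \ref{LMkoscestimate}, $\Ko(\gamma(\cdot,t)) = L\vn{k}_2^2 \le \pi$ for all $t$, whence $\vn{k}_2^2 \le \pi/L \le \pi/|e|$ along the flow. These are the only consequences of \eqref{EQvanillaEhypothesis1} the argument uses.

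Next I would compute the evolution of $\vn{k_s}_2^2$, which is already carried out in the proof of Lemma \ref{LMdsdecay0}: by Lemma \ref{LMevo}, the boundary identities $k_s(\pm1,t)=0$, $F_s(\pm1,t)=0$, $F_{s^3}(\pm1,t)=0$ (see \eqref{EQkszero}, \eqref{EQnew1}), and integration by parts,
\begin{align*}
\frac{d}{dt}\vn{k_s}_2^2 = -2\vn{k_{s^3}}_2^2 + 3\int_\gamma k^2k_{ss}^2\,ds - \frac13\int_\gamma k_s^4\,ds - 2\int_\gamma k^2k_sk_{s^3}\,ds - \frac72\int_\gamma k_s^2k^4\,ds\,.
\end{align*}
The terms $-\frac13\int_\gamma k_s^4\,ds$ and $-\frac72\int_\gamma k_s^2k^4\,ds$ are non-positive and may be dropped; the task is to absorb the two ``bad'' terms $3\int_\gamma k^2k_{ss}^2\,ds$ and $2\big|\int_\gamma k^2k_sk_{s^3}\,ds\big|$ into $2\vn{k_{s^3}}_2^2$.

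This absorption, in the interpolation spirit of Dziuk--Kuwert--Sch\"atzle \cite{DKS02}, is the crux. Reflecting $k$ evenly across $s=0$ and $s=L$ and extending $2L$-periodically yields, thanks to the vanishing of $k_s$ and $k_{s^3}$ at the boundary (no-flux together with Lemma \ref{LMbdy}), a $C^3$ mean-zero function on a closed curve of length $2L$; the one-dimensional Gagliardo--Nirenberg--Sobolev interpolation inequalities hold on such curves with absolute constants, and restricting back to $[0,L]$ merely rescales each $L^2$-norm by $2^{-1/2}$, so they hold on $[0,L]$ with absolute constants. Applying $\vn{k}_\infty \le c\vn{k_{s^3}}_2^{1/6}\vn{k}_2^{5/6}$, $\vn{k_{ss}}_2 \le c\vn{k_{s^3}}_2^{2/3}\vn{k}_2^{1/3}$, $\vn{k_s}_2 \le c\vn{k_{s^3}}_2^{1/3}\vn{k}_2^{2/3}$ and H\"older's inequality, each of the two bad terms is bounded by $c\,\vn{k_{s^3}}_2^{5/3}\vn{k}_2^{7/3}$; since the power of $\vn{k_{s^3}}_2$ here is $5/3 < 2$, Young's inequality gives, for any $\varepsilon > 0$,
\[
3\int_\gamma k^2k_{ss}^2\,ds + 2\Big|\int_\gamma k^2k_sk_{s^3}\,ds\Big| \le \varepsilon\vn{k_{s^3}}_2^2 + c_\varepsilon\big(\vn{k}_2^2\big)^{7} \le \varepsilon\vn{k_{s^3}}_2^2 + c_\varepsilon\Big(\frac{\pi}{|e|}\Big)^{7},
\]
with $c_\varepsilon$ absolute; this last step is exactly where $\Ko \le \pi$ and $L \ge |e|$ enter. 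Taking $\varepsilon = 1$ we get $\frac{d}{dt}\vn{k_s}_2^2 \le -\vn{k_{s^3}}_2^2 + C_0$ with $C_0 = c_1(\pi/|e|)^{7}$. The Poincar\'e inequalities give $\vn{k_{s^3}}_2^2 \ge \frac{\pi^2}{L^2}\vn{k_{ss}}_2^2 \ge \frac{\pi^4}{L^4}\vn{k_s}_2^2 \ge \frac{\pi^4}{L_0^4}\vn{k_s}_2^2$, so $\frac{d}{dt}\vn{k_s}_2^2 \le -\frac{\pi^4}{L_0^4}\vn{k_s}_2^2 + C_0$, and Gr\"onwall's inequality yields $\vn{k_s}_2^2(t) \le \vn{k_s}_2^2(0) + C_0L_0^4/\pi^4$ for all $t$, i.e. the claim with $C = C_0L_0^4/\pi^4$ (depending only on $L_0$ and $|e|$; by the scaling covariance of (E) --- $\gamma\mapsto\lambda\gamma$ with time rescaled by $\lambda^4$ --- one may normalise $|e| = 1$).

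The step I expect to be the genuine obstacle is the interpolation estimate: pinning down the Gagliardo--Nirenberg exponents so that the bad terms collapse to $\varepsilon\vn{k_{s^3}}_2^2$ plus a fixed power of $\vn{k}_2^2$ (hence, via $\Ko \le \pi$ and $L \ge |e|$, to $\varepsilon\vn{k_{s^3}}_2^2$ plus a constant) --- rather than, as in the cruder estimate used in Lemma \ref{LMdsdecay0}, a term of the form $(\mathrm{const}\cdot\Ko)\vn{k_{s^3}}_2^2$ which is only absorbable for $\Ko < \tfrac{4\pi}{7}$ --- and verifying that the reflection legitimately transfers the interpolation inequalities given the available boundary data.
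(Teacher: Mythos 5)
Your proposal is correct and follows essentially the same route as the paper: compute $\frac{d}{dt}\vn{k_s}_2^2$, absorb the cubic terms into $\vn{k_{s^3}}_2^2$ via Dziuk--Kuwert--Sch\"atzle-type interpolation at the cost of $C\vn{k}_2^{14}\le C(\pi/|e|)^7$, and close with an ODE comparison. The only (harmless) variations are that you justify the interpolation by even reflection rather than citing \cite[Appendix C]{dall2014}, treat the cross term $\int k^2k_sk_{s^3}\,ds$ by interpolation instead of the Young-inequality cancellation already performed in Lemma \ref{LMdsdecay0}, and close via Poincar\'e and a linear Gr\"onwall inequality where the paper keeps $-\frac13\int_\gamma k_s^4\,ds$ and uses $\phi'+c\phi^2\le C$.
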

\begin{proof}
The previous proof established that
\begin{align*}
	\frac{d}{dt}\int_\gamma k_s^2\,ds
	&\le - \frac{19}{11}\vn{k_{s^3}}_2^2
	+ 3\int_\gamma k^2k_{ss}^2\,ds
	- \frac13\int_\gamma k_s^4 \,ds
	\,.
\end{align*}
An interpolation inequality from Dziuk-Kuwert-Sch\"atzle \cite[Proposition 2.5]{DKS02}
(see \cite[Appendix C]{dall2014} for details, which additionally deals with
non-closed curves) implies that there exists a universal constant $C$ such that
\[
3	\int_\gamma k^2k_{ss}^2\,ds
	\le \frac8{11}\int_\gamma k_{s^3}^2\,ds + C\vn{k}_2^{14}
	\,.
\]
Therefore
\begin{align*}
	\frac{d}{dt}\int_\gamma k_s^2\,ds
	&\le - \vn{k_{s^3}}_2^2
	- \frac13\int_\gamma k_s^4 \,ds
	+ C\vn{k}_2^{14}
	\\
	&\le
	- \frac1{3L}\bigg(\int_\gamma k_s^2 \,ds\bigg)^2
	+ C\vn{k}_2^{14}(0)
	\,,
\end{align*}
so that, with $\phi(t) = \vn{k_s}_2^2(t)$
\[
\phi'(t) + c\phi^2(t) \le C\,,
\]
where $C$ is a universal constant.
The result follows.
\end{proof}

We now prove that solutions to (FE) satisfying \eqref{EQvanillaEhypothesis00} exist globally in time.

\begin{prop}
\label{CYglobal}
Let $\gamma:(-1,1)\times[0,T)\rightarrow\R^2$ be a solution to (FE)
given by Theorem 1.1.
Assume that
\begin{equation*}
L(\gamma_0)\int_\gamma k^2\,ds\bigg|_{t=0} \le \pi\,.
\end{equation*}
Then $T = \infty$.
\end{prop}
\begin{proof}
In light of Theorem \ref{TMste} and Lemma \ref{LMksestimate}, it remains only to estimate the position vector.
First, we calculate
\begin{align*}
\frac{d}{dt}\int_\gamma |\gamma|^2\,ds
&= \int_\gamma \IP{\gamma}{2F\nu + kF\gamma}\,ds
\\
&\le \int_\gamma k_{ss}(2\IP{\gamma}{\nu} + k|\gamma|^2)\,ds
 + 2\vn{k}^3_\infty\int_\gamma |\gamma|\,ds
 + \vn{k}^4_\infty\int_\gamma |\gamma|^2\,ds
\,.
\end{align*}
Now Lemma 3.5 and Corollary 2.9 (together with the length bound, Lemma 3.2)
imply that $\vn{k}_\infty(t) \le C$ where $C$ depends only on the initial data
$\gamma_0$.
Therefore, using this as well as $|\gamma| \le 1 + \frac14|\gamma|^2$ and integration by parts, we estimate
\begin{align*}
\frac{d}{dt}\int_\gamma |\gamma|^2\,ds
&\le -\int_\gamma k_s(-2k\IP{\gamma}{\tau} + k_s|\gamma|^2 + 2k\IP{\gamma}{\tau})\,ds
\\
&\qquad + C + C\int_\gamma |\gamma|^2\,ds
\\
&= -\int_\gamma k_s^2|\gamma|^2\,ds
 + C + C\int_\gamma |\gamma|^2\,ds
\,.
\end{align*}
The above estimate is of the form
\[
\phi'(t) \le C + C\phi(t)
\]
with $\phi(t) = \vn{\gamma}_2^2(t)$ and so the Gr\"onwall inequality applies to give
\begin{equation}
\label{EQposvec}
\vn{\gamma}_2^2(t) \le Ce^t
\end{equation}
where $C$ is a constant depending only on $\gamma_0$.
To upgrade this to the required $L^\infty$ estimate we apply Corollary 2.9 to
the components $\gamma^1$, $\gamma^2$ of $\gamma$ in any orthonormal basis as
follows.
In the following calculation, we use $\overline{\gamma^i}$ to denote the average of $\gamma^i$: $\gammaiav = \frac1L\int_\gamma \gamma^i\,ds$.
We calculate
\begin{align*}
\vn{\gamma}_\infty
 &= \vn{\gamma^1e_1 + \gamma^2e_2}_\infty
\\&\le
  \vn{\gamma^1}_\infty + \vn{\gamma^2}_\infty
\\&\le
\vn{\gamma^1 - \overline{\gamma^1} + \overline{\gamma^1}}_\infty
+ \vn{\gamma^2 - \overline{\gamma^2} + \overline{\gamma^2}}_\infty
\\&\le
\vn{\gamma^1 - \overline{\gamma^1}}_\infty
 + \vn{\overline{\gamma^1}}_\infty
+ \vn{\gamma^2 - \overline{\gamma^2}}_\infty
 + \vn{\overline{\gamma^2}}_\infty
\\&\le
	\bigg(
		\frac{2L}{\pi}\int_\gamma |\tau^1|^2 \,ds
	\bigg)^{\frac12}
	+ \bigg(
		\frac{2L}{\pi}\int_\gamma |\tau^2|^2 \,ds
	\bigg)^{\frac12}
+ \frac1L\int_\gamma \gamma^1 + \gamma^2\,ds
\\&\le
	2L
	\bigg(
		\frac{2}{\pi}
	\bigg)^{\frac12}
+ \frac{\sqrt 2}L \int_\gamma |\gamma|\,ds\,,\qquad\qquad\text{ since $|a + b| \le \sqrt2\sqrt{a^2 + b^2}$,}
\\&\le
	2L_0
	\bigg(
		\frac{2}{\pi}
	\bigg)^{\frac12}
+ \frac{\sqrt 2}{L^{\frac12}} \bigg(\int_\gamma |\gamma|^2\,ds\bigg)^{\frac12}
\\&\le
	2L_0
	\bigg(
		\frac{2}{\pi}
	\bigg)^{\frac12}
+ \frac{\sqrt 2}{|e|^{\frac12}} \vn{\gamma}_2
\,.
\end{align*}
We also used the a-priori bounds for length above.

Using now \eqref{EQposvec} to estimate the term $\vn{\gamma}_2$, we find
\[
\vn{\gamma}_\infty(t) \le C(1 + e^{\frac{t}{2}})\,.
\]
As noted at the start of the proof, this estimate combined with Lemma
\ref{LMksestimate} and Theorem \ref{TMste} gives global existence for (FE).
\end{proof}

Let us now use the interpolation method to establish uniform bounds for all derivatives of
curvature in $L^2$.

\begin{lem}
Let $\gamma:(-1,1)\times[0,\infty)\rightarrow\R^2$ be a solution to (FE)
given by Theorem \ref{TMste}, satisfying \eqref{EQvanillaEhypothesis00}.
Then $T=\infty$ and there exists absolute constants $c_l$ such that
\[
	\vn{k_{s^l}}_2^2 \le c_l
\,.
\]
\label{LMhigher1}
\end{lem}
\begin{proof}
Using Lemma \ref{LMevo2} for the evolution of $k_{s^l}$ and Lemma \ref{LMbdy} to eliminate boundary terms, we find
\begin{align*}
	\frac{d}{dt}\int_\gamma k_{s^l}^2\,ds
	&=
	\frac12\int_\gamma 4k_{s^l}\partial_t k_{s^l} + k_{s^l}^2(2kk_{ss} + k^4)\,ds
	\\&=
	\frac12\int_\gamma k_{s^l}^2(2kk_{ss} + k^4)\,ds
	- 2\int_\gamma k_{s^l}k_{s^{(l+4)}} \,ds
	\\&\qquad
	+ \sum_{q+r+u=l} c_{qru}\int_\gamma k_{s^l}k_{s^{(q+2)}}k_{s^r}k_{s^u} \,ds
	\\&\qquad
 	+ \sum_{q+r+u+v+w=l} c_{qruvw} \int_\gamma k_{s^l}k_{s^q}k_{s^r}k_{s^u}k_{s^v}k_{s^w}\,ds
	\\&=
	- 2\int_\gamma k_{s^{(l+2)}}^2\,ds
	+ \sum_{q+r+u=l} c_{qru}\int_\gamma k_{s^l}k_{s^{(q+2)}}k_{s^r}k_{s^u} \,ds
	\\&\qquad
 	+ \sum_{q+r+u+v+w=l} c_{qruvw} \int_\gamma k_{s^l}k_{s^q}k_{s^r}k_{s^u}k_{s^v}k_{s^w}\,ds
	\,.
\end{align*}
Interpolating again with \cite[Proposition 2.5]{DKS02} (we note again that \cite[Appendix C]{dall2014}
can be consulted for full details), we find
\[
	\frac{d}{dt}\int_\gamma k_{s^l}^2\,ds
	+ \int_\gamma k_{s^{(l+2)}}^2\,ds
	\le C
\]
where $C$ depends on the upper and lower bounds for length, as well as the bound for curvature.
As before, using Lemma 2.7 and Lemma 2.8 implies the differential inequality
\[
	\frac{d}{dt}\int_\gamma k_{s^l}^2\,ds
	+ \hat{c}\int_\gamma k_{s^{(l)}}^2\,ds
	\le \hat{C}
\]
which implies $\vn{k_{s^l}}_2^2$ is uniformly bounded on $[0,\infty)$, as required.
\end{proof}

It remains to show the convergence for (FE).
For this, our strategy is to use the gradient flow structure to show that
eventually the curvature decays exponentially fast. Then, this exponential
decay gives an a-priori uniform bound also for $\vn{\gamma}_\infty$.
Not only this, we will use the exponential decay to obtain precise uniform
control on $\gamma$ as a map in the classical function space
$C^\infty((-1,1),\R^2)$, and in so doing, obtain convergence of $\gamma$ to a
straight line segment parallel to $e$.

The key idea is that the flow must become eventually very close to a straight line.
Once close to a straight line, decay estimates can be proved.
To begin, we show that the curvature must vanish along the flow as $t\rightarrow\infty$.
This uses global existence, the gradient flow structure, and the classification of elastica in
the plane.

\begin{prop}
Let $\gamma:(-1,1)\times[0,T)\rightarrow\R^2$ be a solution to (FE)
given by Theorem 1.1.
Assume that
\begin{equation*}
L(\gamma_0)\int_\gamma k^2\,ds\bigg|_{t=0} \le \pi\,.
\end{equation*}
Then
\begin{equation}
\label{EQdecay1}
\vn{k}_\infty(t)\rightarrow0\qquad \text{as} \qquad t\rightarrow\infty\,.
\end{equation}
\end{prop}
\begin{proof}
Recall $\Ko(t) = L(t)\vn{k}_2^2(t)$.
We begin with the following claim:

\begin{equation}
\label{EQclaim}
\text{{\bf Claim.}
 For each $t\ge0$, the quantity $\Ko(t)$ is strictly decreasing or equal to zero.}
\end{equation}

We already know from Lemma 3.2 that length is weakly monotonically decreasing.
To show the claim, we will prove that for each $t>0$, either $\gamma(\cdot,t)$ is a horizontal line segment and the quantity is equal to zero, or $\frac{d}{dt}\vn{k}_2^2(t) < 0$.
Fix $t\ge0$.
We have
\[
\frac{d}{dt} \vn{k}_2^2(t) = -2\int_\gamma F^2\,ds\,.
\]
Suppose for the purpose of contradiction that $\frac{d}{dt} \vn{k}_2^2(t) = 0$ and $\gamma(\cdot,t)$ is not a horizontal line segment.

If $\frac{d}{dt}\vn{k}_2^2 = 0$ then, by smoothness, we must have $F=0$.
This implies that the curve $\gamma(\cdot,t)$ is a \emph{free elastica}.
Free elastica have been classified, see for example \cite{Linner}.
There are only two possible shapes for free elastica that fit the boundary
condition of meeting two parallel lines perpendicularly with $k_s(0) = k_s(L) =
0$: horizontal line segments and the rectangular elastica.
We have already assumed that $\gamma(\cdot,t)$ is not a horizontal line segment, so it must be a rectangular elastica.

Due to the boundary condition, the rectangular elastica consists of an integer multiple $m\ge1$ of half-periods, with each
half period turning through a total angle of $\pi$.
Then
\[
\int_\gamma |k^{\gamma(\cdot,t)}|\,ds^{\gamma(\cdot,t)} \ge m\pi \ge \pi
\]
so, by H\"older's inequality,
\[
\Ko(t) = L({\gamma(\cdot,t)}) \int_\gamma (k^{\gamma(\cdot,t)})^2\,ds^{\gamma(\cdot,t)} \ge 
\bigg(\int_\gamma |k^{\gamma(\cdot,t)}|\,ds^{\gamma(\cdot,t)}\bigg)^2 \ge \pi^2
\,.
\]
But we already know that $\Ko(t) \le \pi$, so we have a contradiction.
Thus $\gamma(\cdot,t)$ can't be a rectangular elastica either, and we conclude the claim \eqref{EQclaim}.

The claim implies that for each $t_0>0$ there exists a $\delta(t_0) > 0$ such that
\begin{equation}
\label{EQnew2}
\Ko(t) \le \pi-\delta(t_0) \quad\text{for all $t\ge t_0$.}
\end{equation}
Now, the gradient flow structure and global existence implies
\[
	\int_0^\infty \int_\gamma F^2\,ds\,dt \le \frac12\vn{k}_2^2\big|_{t=0}
\,.
\]
Furthermore, the a-priori estimates from Lemma 3.7 imply that $\Big|
\frac{d}{dt} ||F||_2^2(t) \Big| \le C$, and so by Barbalat's lemma we have
$||F||_2^2(t)\rightarrow0$ as $t\rightarrow\infty$.
Therefore
\[
\bigg| \int_\gamma Fk\,ds \bigg| \le \vn{F}_2\vn{k}_2 \rightarrow 0
\]
so integration by parts implies
\[
\int_\gamma Fk\,ds = -\vn{k_s}_2^2 + \frac12\vn{k}_4^4
\]
keeping in mind that $k_s(0) = k_s(L) = 0$.
Therefore we have
\begin{equation}
\Big|-\vn{k_s}_2^2 + \frac12\vn{k}_4^4\Big| \rightarrow 0
\,.
\label{EQnew11}
\end{equation}
Now, \eqref{EQnew2} and Corollary 2.9 implies that
\begin{align}
\frac12\vn{k}_4^4 \le \frac12\vn{k}_\infty^2\vn{k}_2^2
	&\le \frac{\pi-\delta(1)}{2L} \vn{k}_\infty^2\,,\quad\text{ for $t\ge1$}
\notag\\
	&\le \frac{\pi-\delta_1}{2L} \frac{2L}{\pi} \vn{k_s}_2^2
	= (1-\delta_1/\pi) \vn{k_s}_2^2\,.
\label{EQnew3}
\end{align}
Note that here we set $\delta_1 = \delta(1)$ where $\delta$ is the function in \eqref{EQnew2}.
This implies
\[
-\vn{k_s}_2^2 + \frac12\vn{k}_4^4
\le -\vn{k_s}_2^2 + (1-\delta_1/\pi) \vn{k_s}_2^2
= -\frac{\delta_1}{\pi} \vn{k_s}_2^2
\]
and so
\[
\frac{\delta_1}{\pi} \vn{k_s}_2^2
\le \vn{k_s}_2^2 - \frac12\vn{k}_4^4
\,.
\]
Combining this now with \eqref{EQnew3} we have
\[
\lim_{t\rightarrow\infty}\frac{\delta_1}{\pi} \vn{k_s}_2^2(t)
\le \lim_{t\rightarrow\infty} \bigg|\vn{k_s}_2^2(t) - \frac12\vn{k}_4^4(t)\bigg|
= 0\,.
\]
In particular, Corollary 2.9 now implies that $\vn{k}_\infty(t)\rightarrow0$ as $t\rightarrow\infty$, which is the desired result.
\end{proof}

In order to conclude convergence to some horizontal line segment (recall that
horizontal here means parallel to $e$), we need a time-independent bound for the position vector.
Our passage to this estimate will be through exponential decay.
The key decay estimate is below.

\begin{prop}
Let $\gamma:(-1,1)\times[0,T)\rightarrow\R^2$ be a solution to (FE)
given by Theorem 1.1.
Assume that
\begin{equation*}
L(\gamma_0)\int_\gamma k^2\,ds\bigg|_{t=0} \le \pi\,.
\end{equation*}
Then there exist constants $C_0$ and $\delta_0$ such that
\begin{equation}
\label{EQdecay3}
\vn{k}_2^2(t) \le C_0e^{-\delta_0 t}\quad\text{for all $t\ge0$}\,.
\end{equation}
\end{prop}
\begin{proof}
Returning again to the variational structure, we estimate using the H\"older inequality and Lemma 2.8:
\begin{align*}
\frac{d}{dt}\vn{k}_2^2
 &= -2\int_\gamma F^2\,ds = -2\vn{k_{ss}}_2^2 - \frac12\vn{k}_6^6 - 2\int_\gamma k_{ss}k^3\,ds
\\
 &= -2\vn{k_{ss}}_2^2 - \frac12\vn{k}_6^6 + 6\int_\gamma k^2_{s}k^2\,ds
\\
 &\le -2\vn{k_{ss}}_2^2 - \frac12\vn{k}_6^6 + 6\vn{k}_\infty^2\int_\gamma k^2_{s}\,ds
\\
 &\le -2\vn{k_{ss}}_2^2 - \frac12\vn{k}_6^6 + \frac{6L^2}{\pi^2}\vn{k}_\infty^2\int_\gamma k^2_{ss}\,ds
\\
 &\le -(2-6L^2\vn{k}_\infty^2/\pi^2)\vn{k_{ss}}_2^2
\,.
\end{align*}
The convergence in \eqref{EQdecay1} implies that there exists a $t_1\in(0,\infty)$ such that for all $t>t_1$, $\vn{k}_\infty(t) \le \pi/(L_0\sqrt6)$, so that we have (keeping in mind the length bound)
\begin{equation}
\label{EQdecay2}
\frac{d}{dt}\vn{k}_2^2(t)
 \le -\vn{k_{ss}}_2^2(t)\,,\qquad\text{ for $t>t_1$}\,.
\end{equation}
Now we use Lemmata 2.7 and 2.8 to estimate
\[
\frac{L^4}{\pi^4}\vn{k_{ss}}_2^2
\ge
\frac{L^2}{\pi^2}\vn{k_s}_2^2
\ge \vn{k}_2^2\,.
\]
Combining this with \eqref{EQdecay2} and the length estimate we find
\[
\frac{d}{dt}\vn{k}_2^2(t)
\le -\frac{\pi^4}{L^4(t)}\vn{k}_2^2(t)
\le -\frac{\pi^4}{L_0^4}\vn{k}_2^2(t)
\,.
\]
Integration yields the decay estimate
\begin{equation}
\label{EQdecayyy}
\vn{k}_2^2(t) \le \vn{k}_2^2(t_1)e^{-\frac{\pi^4}{L_0^4}(t-t_1)}\qquad\text{ for $t>t_1$}\,.
\end{equation}
Now as $\sup_{t\in(0,t_1)}\vn{k}_2^2(t) \le \vn{k}_2^2(0)$, we may conclude the decay estimate \eqref{EQdecay3} with
\[
C_0 = \vn{k}_2^2(0)e^{\frac{\pi^4}{L_0^4}t_1}\qquad\text{ and }\qquad \delta_0 = \frac{\pi^4}{L_0^4}\,.
\]
\end{proof}

Next, we use the exponential decay from \eqref{EQdecay3} and a-priori estiamtes in Lemma 3.7 to conclude exponential decay of all terms $\vn{k_{s^l}}_2^2(t)$, where $l\ge0$.

\begin{lem}
Let $\gamma:(-1,1)\times[0,T)\rightarrow\R^2$ be a solution to (FE)
given by Theorem 1.1 satisfying \eqref{EQvanillaEhypothesis00}.
Let $l>0$ be in integer.
There exists a $C_l$ such that
\begin{equation}
\label{EQdecay4}
\vn{k_{s^l}}_2^2(t) \le C_le^{-\frac{\delta_0}{2} t}\quad\text{for all $t\ge0$}\,.
\end{equation}
\end{lem}
\begin{proof}
Observe that we may integrate by parts to obtain
\[
\vn{k_{s^l}}_2^2 = [k_{s^l}k_{s^{l-1}}]_{\{0,L\}}-\int_\gamma k_{s^{l-1}}k_{s^{l+1}}\,ds
                 = -\int_\gamma k_{s^{l-1}}k_{s^{l+1}}\,ds
\,,
\]
because no matter what $l>0$ is, at least one of $l$ or $(l-1)$ are odd, and so the boundary terms vanish by Lemma 2.6.
Continuing in this way (the boundary term each time is
$k_{s^{l-k-1}}k_{s^{l+k}}$, and one of $(l-k-1)$ or $(l+k)$ must be odd), we
integrate by parts a total of $2l$ times and then using the H\"older inequality
yields
\[
\vn{k_{s^l}}_2^2(t) \le \vn{k}_2(t) \vn{k_{s^{2l} }}_2(t)
\,.
\]
The decay estimate \eqref{EQdecay4} now follows from \eqref{EQdecay3} and Lemma 3.7.
\end{proof}

A slightly specialised version of the above lemma will be useful for our argument below.

\begin{cor}
Let $\gamma:(-1,1)\times[0,T)\rightarrow\R^2$ be a solution to (FE)
given by Theorem 1.1 satisfying \eqref{EQvanillaEhypothesis00}.
Then, for each $l\in\N$,
\begin{equation}
\label{EQdecay6}
\vn{F_{s^l}}^2_\infty(t) + \vn{k_{s^l}}^2_\infty(t) \le C_le^{-\frac{\delta_0}{2} t}
\end{equation}
\end{cor}
\begin{proof}
In this proof we assume throughout that $t\in(\hat t,\infty)$.
The result for all $t$ will follow by adjusting the constant $C_l$ to account for a compact interval of earlier time (during which the quantity in question is bounded).

First, 
\begin{equation}
\label{EQhelp}
F_{s^l} = k_{s^{l+2}} + \frac12(k^3)_{s^l} 
                = k_{s^{l+2}} + \sum_{q+r+u=l} c_{qru} k_{s^q}k_{s^r}k_{s^u}
\end{equation}
for constants $c_{qru}\in\R$ with $q,r,u\ge0$.
Now, \eqref{EQdecay4} and Corollary 2.9 implies, for any $j\ge0$,
\begin{equation}
\label{EQest1}
	\vn{k_{s^j}}_\infty^2 \le \frac{2L}{\pi}\vn{k_{s^{j+1}}}_2^2
\le C_je^{-\frac{\delta_0}{2} t}
	\,.
\end{equation}
The estimate \eqref{EQest1} applies to each term in \eqref{EQhelp} and this yields the result.
\end{proof}

We are now ready to finish the convergence proof for the flow (FE).
A standard argument (see \cite[Appendix]{ideal}) gives convergence under general conditions.
A point of difference in our argument here is that we do not reparametrise, and prove non-degeneracy of the parametrisation in the limit.
Although this argument is mostly standard and well-known, for the convenience of the reader, we give the details here.

\begin{thm*}[Theorem 1.4]
Suppose $|e|>0$.
Let $\gamma:(-1,1)\times[0,T)\rightarrow\R^2$ be a solution to (FE).
Assume \eqref{EQvanillaEhypothesis1}.
Then the flow exists globally $T=\infty$, and $\gamma(\cdot,t)$ converges
exponentially fast to a straight line segment parallel to $e$ in the $C^\infty$
topology.
\end{thm*}
\begin{proof}
First, note that \eqref{EQevolareaelement} implies
\begin{equation}
\label{EQhelp2}
	(\log |\gamma_x|)_t(x,t) = (kF)(x,t)
\end{equation}
For each fixed $x$ this is an ODE.
Then, \eqref{EQdecay6} implies that $|(\log |\gamma_x|)_t| \le Ce^{-\frac{\delta_0}2t}$.
Set $\hat c_0 = 2C\delta_0^{-1}$.
We find
\begin{equation}
\label{EQhelp25}
|\gamma_x|(x,0)e^{-\hat c_0} \le |\gamma_x|(x,t) \le e^{\hat c_0}|\gamma_x|(x,0)
\,,
\end{equation}
in particular there exists a $c_0$ such that for all $(x,t)$ we have $c_0^{-1} \le |\gamma_x|(x,t) \le c_0$.

This uniform control can be used in an induction argument to convert control on
any number of arc-length derivatives $\partial_s$ to control on parameter
derivatives $\partial_x$.
Indeed, for any map $\phi:(-1,1)\rightarrow\R^r$, $r\in\N$, we have 
\begin{equation}
\label{EQhelp3}
\phi_{x^m}
 =
|\gamma_x|^{m}
\phi_{s^m}
 + 
|\gamma_x|^{m-1} \sum_{p_1+\cdots+p_{m} = m} \phi_{x^{p_1}} (|\gamma_x|^{-1})_{x^{p_2}} \cdots (|\gamma_x|^{-1})_{x^{p_m}}
\end{equation}
where $p_1\in\{1,\ldots,m-1\}$, $p_i\in\{0,\ldots,m-1\}$ for $i\ne1$ and $m\ge1$ is an integer.
We apply this now with $\phi = kF$ to obtain uniform control of $|\gamma_x|_{x^l}$ and $(kF)_{x^l}$ for any $l\ge0$.
Above we show the desired control for $l=0$.
Assuming that there exists $\{c_1, \ldots, c_l\}$ such that 
\begin{equation}
\label{EQhelp26}
(|\gamma_x|^{-1})_{x^l}(x,t) \le c_l\qquad\text{ and }\qquad |(kF)_{x^l}|(x,t) \le c_le^{-\frac{\delta_0}2t}\quad\text{ for all $(x,t)$,}
\end{equation}
we use \eqref{EQevolareaelement} to find
\begin{align*}
	((|\gamma_x|^{-1})_{x^{l+1}})_t&(x,t) 
	= -(|\gamma_x|^{-1}kF)_{x^{l+1}}(x,t)
\\
	&= -(|\gamma_x|^{-1})_{x^{l+1}}(x,t)\,(kF)(x,t)
	  - \sum_{q_1+q_2 = l+1} c_{q_1q_2} (|\gamma_x|^{-1})_{x^{q_1}}(x,t)\, (kF)_{x^{q_2}}(x,t)
\,,
\end{align*}
where $q_1\in\{0,\ldots,l\}$ and $q_2\in\{1,\ldots,l+1\}$.
The inductive hypothesis implies that this equation is an ODE with structure
\[
\Phi'(t) + \Phi(t) \eta_0(t) \le C\sum_{i=1}^{l+1} \eta_i(t)
\]
where $C = C(c_1,\ldots,c_{l})$, $\Phi(t) = (|\gamma_x|^{-1})_{x^{l+1}}(x,t)$, and $\eta_i(t) = |(kF)_{x^i}|(x,t)$.
Therefore
\begin{equation}
\label{EQhelp4}
(e^{-\int_0^t\eta_0(u)\,du}\, \Phi(t))' \le C\sum_{i=1}^{l-1} \eta_i(t)
\,.
\end{equation}
The inductive hypothesis controls $\eta_i(t)$ for $i\in\{0,\ldots,l\}$. For $\eta_{l+1}$, we use \eqref{EQhelp3} to find
\begin{equation*}
\eta_{l+1}
 \le
|\gamma_x|^{l+1}
|(kF)_{s^{l+1}}|
 + 
|\gamma_x|^{l} \sum_{p_1+\cdots+p_{l+1} = l+1} \eta_{p_1} |(|\gamma_x|^{-1})_{x^{p_2}}| \cdots |(|\gamma_x|^{-1})_{x^{p_{l+1}}}|
\end{equation*}
where $p_1\in\{1,\ldots,l\}$, $p_i\in\{0,\ldots,l\}$ for $i\ne1$.
Estimating the right hand side using the inductive hypothesis and \eqref{EQhelp25} reveals that
\[
\eta_{l+1}(t) \le \hat c_{l+1}e^{-\frac{\delta_0}2t}
\]
where $\hat c_{l+1} = \hat c_{l+1}(c_0,\ldots,c_l)$.

Thus for any $t$, $||\eta_i||_{L^1(0,t)} \le 2c_i\delta_0^{-1}$, $i\in\{0,\ldots,l+1\}$.
Using this with \eqref{EQhelp4} we find
\[
\Phi(t) \le \Phi(0) + C
\]
where $C$ now additionally depends on $\delta_0$.
Taking $c_{l+1} = \max\{\hat c_{l+1}, \Phi(0) + C\}$ yields the inductive step, and finishes the proof of the estimates
\eqref{EQhelp26}.

Now, we claim that
\begin{equation}
\label{EQhelp5}
||\gamma_{x^l}||_{L^\infty(-1,1)} \le d_l\,,\text{for $l\in\{0,1\}$ and }
||\gamma_{x^l}||_{L^\infty(-1,1)} \le d_le^{-\frac{\delta_0}{4}t}\,,\text{for $l\ge2$.}
\end{equation}
for constants $d_l$. This follows already for $l=1$ from \eqref{EQhelp25} and
for $l\ge2$ from expanding $\gamma_{s^l} = \kappa_{s^l} = (k\nu)_{s^l}$, using the Frenet-Serret frame equations, and
applying \eqref{EQdecay6}, \eqref{EQhelp3}, \eqref{EQhelp26}.

For $l=0$ we return to the flow equation $\gamma_t(x,t) = -(F\nu)(x,t)$.
Since $|F\nu| \le |F| \le |k_{ss}| + \frac12|k|^3 \le ce^{-\frac{\delta_0}{4}t}$, we may integrate to find
$|\gamma(x,t)| \le |\gamma_0(x)| + 4c\delta_0^{-1} =: d_0$, which settles \eqref{EQhelp5}.

We use now the completeness of $C^\infty((-1,1),\R^2)$.
We claim that for any $\{t_i\}\subset(0,\infty)$, $t_i\rightarrow\infty$, the sequence $\{\gamma(\cdot,t_i)\}$ satisfies
\begin{equation}
\label{EQhelp6}
\lim_{i,j\rightarrow\infty} ||\gamma(\cdot,t_i) - \gamma(\cdot,t_j)||_{C^q(K,\R^2)} = 0 
\end{equation}
for all $q\in\N$ and non-empty $K\subset\subset(-1,1)$. 
If this holds, completeness implies that there exists a unique
$\gamma_\infty\in C^\infty((-1,1),\R^2)$ such that
$\gamma(\cdot,t)\rightarrow\gamma_\infty$ in $C^\infty((-1,1),\R^2)$, which
would finish the proof.

To prove \eqref{EQhelp6}, we use the flow equation and the estimates above.
Indeed, let $K$ be a compact non-empty subset of $(-1,1)$.
For any $q\in\N$, $x\in K$, $t_i,t_j\in(0,\infty)$ with $t_i<t_j$ we have
\begin{align*}
\left|
	\gamma_{x^q}(x,t_j) - \gamma_{x^q}(x,t_i)
\right|
	&= \left|
		\int_{t_i}^{t_j} (F\nu)_{x^q}(x)\,dt
	\right|
\\
	&\le C(q,\delta_0)e^{-\frac{\delta_0}4 t_i}
\end{align*}
where we used \eqref{EQhelp26}.
This estimate implies \eqref{EQhelp6}, and finishes the proof.
\end{proof}

\begin{rmk}[\L{}ojasiewicz-Simon]
Once we have subconvergence, there are by now several ways to upgrade to full
convergence.
The approach of Simon \cite{Simon83}, using in particular the
\L{}ojasiewicz-Simon inequality, can be used to obtain full convergence of the
flow.
Simon's original application was to study the asymptotics of second-order
quasilinear parabolic systems on Riemannian manifolds with application to
minimal submanifold theory. However the technique has incredible generality
(Simon's work is itself an infinite-dimensional generalisation of
\L{}ojasiewicz's work on analytic functions) and is by now used in all areas of
parabolic PDE, including our setting here of fourth-order problems with
boundary.

For the elastic flow, all candidate limits have zero energy. They are not
separated in standard function spaces, being continuous in the $C^0$-norm for
example.
Movement in a direction perpendicular to $e$ corresponds to a degeneracy in the
differential operator.

There are two strategies to prove the \L{}ojasiewicz-Simon inequality that we
wish to describe.
First, as we have subconvergence to a straight line $\gamma_\infty$, we may (as
in the proof given above) use this line to describe the flow at later times as
the graph of a function $u:[-1,1]\rightarrow\R$, that is,
\[
	(x,u(x)) = \gamma(x)
\]
where $\gamma$ may have undergone reparametrisation by a tangential
diffeomorphism.
The flow reduces to a fourth-order quasilinear scalar PDE with Neumann boundary
conditions.
The \L{}ojasiewicz-Simon inequality has been established for similar PDE in
this setting by for example Rybka-Hoffnlann \cite{Piotr99}.
A similar argument works here.

The other strategy which we wish to describe is due to Chill \cite{ChillJFA},
who provides three generic criteria\footnote{Chill's argument applies more
generally; for full details, we invite the interested reader to consult
\cite{ChillJFA}.}, which we translate here as:
\begin{itemize}
	\item That the energy is analytic;
	\item That the gradient of the energy is analytic;
	\item That the derivative of the gradient of the energy evaluated at zero is Fredholm.
\end{itemize}
Of course, each of the above need to be understood in appropriate function spaces.
Recently, Dall'Acqua-Pozzi-Spener \cite{dalllojasiewicz} have, for a
constrained variation of the elastic flow, proved each of the above with
clamped boundary conditions.
In light of Lemma \ref{LMbdy}, their proof applies also here, and so this
approach again yields the \L{}ojasiewicz-Simon inequality.
We did not use the \L{}ojasiewicz-Simon inequality to obtain convergence for
(FE), as the uniform flat parametrisation control used in the proof of Theorem
1.4 works nicely.
\label{RMKlojasiewicz}
\end{rmk}

\begin{rmk}[Linearisation]
Perhaps the most striaghtforward approach (apart from the one we have taken here) is to use linearisation.
Once a candidate for the limit is identified, we can linearise the flow
expressed as the evolution of a graph around that limit, and study the spectrum
of the resultant linear equation.

Specifically, decay in the curvature implies that for large times we may write
the flow as a fourth-order quasilinear scalar problem for a graph function
$u:(-|e|/2,|e|/2)\times[T_0,\infty)\rightarrow\R$ with Neumann boundary
condition.  (Note that here we have used a tangential diffeomorphism to change
the interval $(-1,1)$ to $(-|e|/2,|e|/2)$.)

In this setting we would apply Hale-Raugel \cite{Hale92} (see also Matano
\cite{Matano78convergence} and Zelenyak \cite{68stabilisation}). We note that a
similar application was recently made in \cite{Escudero15} for another
fourth-order parabolic problem.
	
In our case we would need to check that the linearisation around any
equilibrium point has zero as an eigenvalue with multiplicity at most one. The
conclusion from \cite{Hale92} is that the limit of the flow is then unique.
\end{rmk}

\begin{rmk}[Novaga-Okabe]
A new approach to obtaining full convergence from subconvergence is the
innovative technique of Novaga-Okabe \cite{NO}.
They prove that with certain boundary conditions the set of equilibria are
separated in the plane in the sense of the Hausdorff metric, and then show
that in a very general setting this implies full convergence.

For the flows (CD) and (FE) translation acts continuously on equilibria and is
an invariant of the respective energies.
Therefore this technique does not appear to work in these cases.
However, for other kinds of boundary conditions that rule out such continuous
energy-invariant actions, we expect that their technique may prove efficient.
\end{rmk}


Let us now consider the curve diffusion flow (CD) and establish Theorem \ref{TMmainb}.
We will prove that the flow converges to a straight line.
This equilibrium has zero curvature, and so by Lemma \ref{WN}, any solution
that is asymptotic to a straight line must have $\kav(t) = 0$ for all
$t$.
We indirectly assume that $\omega=0$ by taking on an assumption such as \eqref{EQhyp}.
Then $\Ko(t) = L(t)\vn{k}_2^2(t)$, and a-priori estimates become much simpler.
We begin with the estimate below, that preserves $\Ko(0) \le K$
so long as $K\le \frac{2\pi}{3}$.

\begin{lem}
\label{KoESTcaseb}
Let $\gamma:(-1,1)\times[0,T)\rightarrow\R^2$ be a solution to (CD) given by Theorem
\ref{TMste} with $\omega = 0$.
Then, if
\[
L(0)\vn{k}_2^2(0)
 \le K \le \frac{2\pi}3
\]
we have the uniform a-priori estimate
\[
L(t)\vn{k}_2^2(t)
 \leq K
\]
for all $t\in[0,T)$.
\end{lem}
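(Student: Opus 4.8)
The plan is to feed the evolution identity for $\Ko$ from Lemma \ref{KoE} into an ODE comparison, using the hypothesis $\omega = 0$ to kill all but one term on the right-hand side and the Sobolev inequalities of Corollary \ref{LU} to absorb what remains. Since $\omega = 0$, Lemma \ref{WN} gives $\kav(\gamma_t) = \frac1L\int_\gamma k\,ds = \frac{2\omega\pi}{L} = 0$ for all $t$, so $\Ko = L\vn{k}_2^2$ and the right-hand side of Lemma \ref{KoE} collapses to a single cubic term:
\[
\rd{}{t}\Ko + \Ko\frac{\vn{k_s}_2^2}{L} + 2L\vn{k_{ss}}_2^2 = 3L\int_\gamma k^2k_s^2\,ds\,.
\]

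The key step is then to estimate this cubic term. Bounding pointwise and applying Corollary \ref{LU} to $f = k_s$ — which is legitimate precisely because the no-flux condition forces $k_s = 0$ on $\{0,L\}$ — gives
\[
\int_\gamma k^2k_s^2\,ds \le \vn{k_s}_\infty^2\vn{k}_2^2 \le \frac{L}{\pi}\vn{k_{ss}}_2^2\cdot\frac{\Ko}{L} = \frac{\Ko}{\pi}\vn{k_{ss}}_2^2\,,
\]
where I have used $\vn{k}_2^2 = \Ko/L$. Substituting back and rearranging yields the differential inequality
\[
\rd{}{t}\Ko + \Ko\frac{\vn{k_s}_2^2}{L} + \Big(2 - \frac{3\Ko}{\pi}\Big)L\vn{k_{ss}}_2^2 \le 0\,.
\]

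Finally I would run a standard continuity argument. Whenever $\Ko \le \frac{2\pi}{3}$ the bracketed coefficient is nonnegative, so in that regime $\rd{}{t}\Ko \le -\Ko\vn{k_s}_2^2/L \le 0$. Set $\varepsilon := \Ko(\gamma_0) \le \frac{2\pi}{3}$ and let $t^* = \sup\{t\in[0,T) : \Ko(\gamma_\tau)\le\frac{2\pi}{3}\text{ for all }\tau\in[0,t]\}$. On $[0,t^*)$ the quantity $\Ko$ is non-increasing, hence $\Ko(\gamma_t)\le\varepsilon$ there. If $t^*<T$, continuity forces $\Ko(\gamma_{t^*}) = \frac{2\pi}{3}$; together with monotonicity this forces $\varepsilon = \frac{2\pi}{3}$ and $\Ko\equiv\frac{2\pi}{3}$ on $[0,t^*]$, so that $\rd{}{t}\Ko\equiv0$ there, whence $\vn{k_s}_2^2\equiv0$, whence $k\equiv\kav\equiv0$ and $\Ko\equiv0$ — a contradiction. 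Therefore $t^* = T$ and $\Ko(\gamma_t)\le\varepsilon = \Ko(\gamma_0)$ for all $t\in[0,T)$, which is the assertion.

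I do not expect a serious obstacle: the crux is the interpolation estimate for $\int_\gamma k^2k_s^2\,ds$, which works only because $k_s$ vanishes on the boundary (equivalently, because of the no-flux condition), and the sole delicate point is the edge case $\varepsilon = \frac{2\pi}{3}$ in the continuity step, dispatched as above.
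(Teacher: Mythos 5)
Your proposal is correct and follows essentially the same route as the paper: specialise Lemma \ref{KoE} to $\kav=0$, bound $3L\int_\gamma k^2k_s^2\,ds$ by $\frac{3L}{\pi}(L\vn{k}_2^2)\vn{k_{ss}}_2^2$ via Corollary \ref{LU} applied to $k_s$ (using the no-flux condition), and integrate the resulting differential inequality. The only difference is that you spell out the open--closed continuity argument (including the borderline case $\Ko(\gamma_0)=\tfrac{2\pi}{3}$) that the paper compresses into the phrase ``which upon integration yields''.
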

\begin{proof}
Using Lemma \ref{LMbdy} we have
\begin{align*}
\rd{}{t}\Ko &+ \Ko\frac{\vn{k_s}_2^2}{L} + 2L\vn{k_{ss}}_2^2
\\
 &= 3L\int_{\gamma} (k-\kav)^2k^2_s ds
   + 6\kav L \int_{\gamma} (k-\kav)k_s^2 ds
   + 2\kav^2 L \vn{k_s}^2_2.
\end{align*}
Since $\kav = 0$, this simplifies to
\begin{align*}
\rd{}{t}(L\vn{k}_2^2) &+ \vn{k}_2^2\vn{k_s}_2^2 + 2L\vn{k_{ss}}_2^2
 = 3L\int_{\gamma} k^2k^2_s\, ds
\,.
\end{align*}
This term is estimated by
\[
3L\int_{\gamma} k^2k_s^2 ds
 \le \frac{3L}{\pi}(L\vn{k}_2^2)\vn{k_{ss}}_2^2\,.
\]
Therefore
\begin{align*}
\rd{}{t}(L\vn{k}_2^2) &+ \vn{k}_2^2\vn{k_s}_2^2
 + L\bigg(2-\frac3\pi L\vn{k}_2^2\bigg)\vn{k_{ss}}_2^2
 \le 0
\,,
\end{align*}
which upon integration yields
\begin{align*}
L\vn{k}_2^2(t) &+ \int_0^t\vn{k}_2^2\vn{k_s}_2^2\,d\tau
 \leq L\vn{k}_2^2(0)\,,
\end{align*}
as required.
\end{proof}

Theorem \ref{TMste} tells us that in order to conclude global existence, it is
enough to find an a-priori estimate for the position vector in $L^\infty$ and
the first derivative of curvature in $L^2$.
Our strategy for this is to show first an a-priori decay estimate for
$\vn{k_{ss}}_2$, the velocity in $L^2$.

\begin{lem}
\label{LMkss}
Let $\gamma:(-1,1)\times[0,T)\rightarrow\R^2$ be a solution to (CD) given by Theorem \ref{TMste}.
Assume that
\begin{equation}
\label{EQhypot}
L(0)\vn{k}_2^2(0) < \frac{2\pi}{9}
\,.
\end{equation}
Then we have for some uniform constant $\delta_0 > 0$ the exponential decay
\begin{equation}
\label{EQcasebexpodecaykss}
\int_\gamma k_{ss}^2\,ds\bigg|_t
 \le
\int_\gamma k_{ss}^2\,ds\bigg|_{t=0}\, e^{-\delta_0 t}
\,.
\end{equation}
\end{lem}
\begin{proof}
Let us compute the evolution of $\vn{k_{ss}}_2^2$.
By using the commutator $[\partial_s,\partial_t]$ as in Lemma \ref{LMevo}, we find
\begin{align*}
k_{sst} &= k_{sts} - kk_{ss}^2
\\
 &= (- k_{s^5} - k^2k_{s^3} - 3k_{ss}k_sk)_s - kk_{ss}^2
\\
 &= - k_{s^6} - k^2k_{s^4} - 3k_{s^3}k_sk - kk_{ss}^2 - 2kk_sk_{s^3} - 3k_{ss}(k_{ss}k + k_s^2)
\\
 &= - k_{s^6} - k^2k_{s^4} - 5k_{s^3}k_sk - 4kk_{ss}^2 - 3k_{ss}k_s^2
\,.
\end{align*}
Using this and integration by parts we compute the following equality
\begin{align}
\frac{d}{dt}\int_\gamma &k_{ss}^2\,ds
\notag
\\
 &= \int_\gamma \Big[
    - 2k_{s^6}k_{ss} - 2k^2k_{s^4}k_{ss} - 10k_{s^3}k_sk_{ss}k - 7kk_{ss}^3 - 6k_{ss}^2k_s^2
                \Big]\,ds
\notag
\\
 &= -2\vn{k_{s^4}}_2^2 - 6\vn{k_sk_{ss}}_2^2
    + \int_\gamma \Big[
       - 2k_{s^4}k_{ss}k^2 - 10k_{s^3}k_{ss}k_sk - 7k_{ss}^3k
                  \Big]\,ds
\notag
\\
 &= -2\vn{k_{s^4}}_2^2
	+ \vn{k_sk_{ss}}_2^2
    + \int_\gamma \Big[
        - 2k_{s^4}k_{ss}k^2 
	+ 4k_{s^3}k_{ss}k_sk
                  \Big]\,ds
\notag
\\
 &= -2\vn{k_{s^4}}_2^2
	- 2\vn{k_sk_{s^3}}_2^2
	+ \vn{k_sk_{ss}}_2^2
    - 4\int_\gamma
        k_{s^4}k_{ss}k^2 
                  \,ds
\,.
\label{EQevolkss}
\end{align}
In the above we used Lemma \ref{LMbdy} and the no-flux condition to remove the boundary terms.

The last two terms on the right hand side are controlled by the following estimates.
First,
\begin{align*}
    - 4\int_\gamma
        k_{s^4}k_{ss}k^2 
                  \,ds
	&\le 4\vn{k_{s^4}}_2\bigg(\int_\gamma k_{ss}^2k^4\,ds\Big)^\frac12
\\
	&\le 4\vn{k_{s^4}}_2\bigg(\vn{k}_\infty^4\int_\gamma k_{ss}^2\,ds\Big)^\frac12
\\
	&\le 4\vn{k_{s^4}}_2\bigg(\frac{4L^2}{\pi^2} \vn{k_s}_2^4\int_\gamma k_{ss}^2\,ds\bigg)^\frac12
\\
	&\le 4\vn{k_{s^4}}_2\bigg(\frac{4L^2}{\pi^2} \vn{k}_2^2 \vn{k_{ss}}_2^4\bigg)^\frac12
\\
	&\le 4\vn{k_{s^4}}_2\bigg(\frac{4L^2}{\pi^2} \vn{k}_2^4 \vn{k_{s^4}}_2^2\bigg)^\frac12
	 = \frac{8}{\pi} (L\vn{k}_2^2) \vn{k_{s^4}}_2^2
\,.
\end{align*}
Above we used Corollary 2.9, Lemma 2.8, integration by parts (keeping in mind Lemma \ref{LMbdy}) and the H\"older inequality.
We use a similar technique to estimate on the remaining term:
\[
	\vn{k_sk_{ss}}_2^2
	\le \frac{L}{\pi}\vn{k_{ss}}_2^4 \le \frac{L\vn{k}_2^2}{\pi} \vn{k_{s^4}}_2^2
\,.
\]
Combining the above two estimates with the evolution equation \eqref{EQevolkss} we find
\begin{align*}
\frac{d}{dt}\int_\gamma k_{ss}^2\,ds
	+ 2\vn{k_sk_{s^3}}_2^2
 &\le -\Big(2 - \frac9\pi L\vn{k}_2^2\Big)
	\vn{k_{s^4}}_2^2
\\ &\le -\Big(2 - \frac9\pi L(0)\vn{k}_2^2(0)\Big)
	\vn{k_{s^4}}_2^2
 = -\delta \vn{k_{s^4}}_2^2
\,,
\end{align*}
where $\delta = 2 - \frac9\pi L(0)\vn{k}_2^2(0) > 0$ by hypothesis \eqref{EQhypot} and Lemma 3.13.
Therefore
\[
\frac{d}{dt}\vn{k_{ss}}_2^2 \le -\delta \frac{\pi^4}{L^4} \le -\delta \frac{\pi^4}{L_0^4} = -\delta_0\vn{k_{ss}}_2^2
\]
where $\delta_0 = \delta\frac{\pi^4}{L_0^4}$ and the estimate \eqref{EQcasebexpodecaykss} follows.
\end{proof}

Now we give a crucial time-dependent bound for the position vector.

\begin{lem}
\label{LMgammainfty}
Let $\gamma:(-1,1)\times[0,T)\rightarrow\R^2$ be a solution to (CD) given by Theorem \ref{TMste}.
Under the assumptions of Lemma \ref{LMkss}, for a constant $C \in
(0,\infty)$ depending only on $\gamma_0$ we have
\begin{equation}
\vn{\gamma}^2_\infty \le C+Ce^t\,.
\label{EQlinftygamma}
\end{equation}
\end{lem}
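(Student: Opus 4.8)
The plan is to control $\vn{\gamma}_\infty$ through the $L^2$ norm of the position vector, just as in the proof of Corollary~\ref{CYglobal}. The hypothesis of Lemma~\ref{LMkss} forces $\omega=0$ (Remark~\ref{RMKomegazero}), so Lemma~\ref{LMevol2} gives $L'=-\vn{k_s}_2^2\le0$ and hence $L(\gamma_t)\le L_0$; together with $L\ge|e|$ and the elementary bound \eqref{EQgammabound},
\[
\vn{\gamma}_\infty \le \frac1L\int_\gamma|\gamma|\,ds+\frac L2 \le \frac{L_0^{1/2}}{|e|}\vn{\gamma}_2+\frac{L_0}{2}\,,
\]
it suffices to bound $\int_\gamma|\gamma|^2\,ds$ by $C(1+t)e^t$.

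For this I would differentiate, using $\partial_t\gamma=-k_{ss}\nu$ together with the evolution \eqref{EQevolareaelement} of the length element, to get
\[
\rd{}{t}\int_\gamma|\gamma|^2\,ds=\int_\gamma k_{ss}\big(-2\IP{\gamma}{\nu}+k|\gamma|^2\big)\,ds\,,
\]
then integrate by parts once: the boundary term carries a factor $k_s$ and so vanishes by the no-flux condition $k_s(\pm1,t)=0$. Using $\partial_s\IP{\gamma}{\nu}=-k\IP{\gamma}{\tau}$ and $\partial_s|\gamma|^2=2\IP{\gamma}{\tau}$ this reduces to
\[
\rd{}{t}\int_\gamma|\gamma|^2\,ds=-\int_\gamma k_s^2|\gamma|^2\,ds-4\int_\gamma kk_s\IP{\gamma}{\tau}\,ds\,.
\]
The cross term is absorbed by Young's inequality $4|k|\,|k_s|\,|\gamma|\le k_s^2|\gamma|^2+4k^2$ (and $|\IP{\gamma}{\tau}|\le|\gamma|$), which cancels the good first term and leaves
\[
\rd{}{t}\int_\gamma|\gamma|^2\,ds\le 4\vn{k}_2^2\,.
\]

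Finally I would note that the right-hand side is a priori bounded: since $\omega=0$ we have $\vn{k}_2^2=\Ko/L$, and as $L_0\vn{k}_2^2(\gamma_0)<\pi/10<2\pi/3$ Lemma~\ref{KoESTcaseb} gives $\Ko(\gamma_t)<\pi/10$, so with $L\ge|e|$ we obtain $\vn{k}_2^2\le\pi/(10|e|)$. Integrating, $\int_\gamma|\gamma|^2\,ds$ grows at most linearly in $t$, and feeding this into the first display yields $\vn{\gamma}_\infty^2\le C(1+t)$, which is stronger than \eqref{EQlinftygamma}. There is really no obstacle here beyond carrying out the integration by parts and the cross-term estimate correctly; everything else is assembling the a priori bounds already established (note that only the \emph{hypothesis} of Lemma~\ref{LMkss} is used, via $\omega=0$ and Lemma~\ref{KoESTcaseb}, and the uniform estimate \eqref{EQuniformksscaseac} or the decay \eqref{EQcasebexpodecaykss} of Lemma~\ref{LMkss} could alternatively be invoked if one estimates the cross term crudely through $\vn{k_s}_\infty^2\le\tfrac L\pi\vn{k_{ss}}_2^2$ and a Gr\"onwall argument, which is the route that naturally produces the stated form $C(1+t)e^t$).
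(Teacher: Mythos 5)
Your proof is correct, but it handles the decisive term differently from the paper, and in fact more sharply. Both arguments start from the same identity
$\tfrac{d}{dt}\int_\gamma|\gamma|^2\,ds=\int_\gamma k_{ss}\bigl(-2\IP{\gamma}{\nu}+k|\gamma|^2\bigr)\,ds$
(this is \eqref{EQlminfty1a}), and both treat the piece $\int_\gamma|\gamma|^2kk_{ss}\,ds$ identically: integrate by parts (the boundary term carries $k_s$ and dies), absorb the cross term by Young, and bound the remainder by $\vn{k}_2^2=\Ko/L\le C$ via $\omega=0$ and Lemma \ref{KoESTcaseb}. The divergence is in the first piece: the paper estimates $-2\int_\gamma\IP{\gamma}{k_{ss}\nu}\,ds\le\int_\gamma k_{ss}^2\,ds+\int_\gamma|\gamma|^2\,ds$ as in \eqref{EQlminfty2a}, which requires the a-priori bound \eqref{EQuniformksscaseac} on $\vn{k_{ss}}_2^2$ from Lemma \ref{LMkss} and, because of the $\int_\gamma|\gamma|^2\,ds$ on the right, forces a Gr\"onwall argument that produces the $e^t$. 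You instead integrate this term by parts as well, turning it into $-2\int_\gamma kk_s\IP{\gamma}{\tau}\,ds$, so that after Young's inequality the good term $-\int_\gamma k_s^2|\gamma|^2\,ds$ absorbs all position-vector factors and only $4\vn{k}_2^2\le C$ survives. This buys a linear bound $\int_\gamma|\gamma|^2\,ds\le C(1+t)$, hence $\vn{\gamma}_\infty^2\le C(1+t)$, which is strictly stronger than \eqref{EQlinftygamma} and uses only the hypothesis of Lemma \ref{LMkss} (through $\omega=0$ and Lemma \ref{KoESTcaseb}), not its conclusion. The final passage from $L^2$ to $L^\infty$ also differs cosmetically: the paper argues componentwise via $\vn{\gamma_i-\gammaiav}_\infty^2\le L^2/\pi$ together with $\gammaiav^2\le L^{-1}\vn{\gamma}_2^2$, whereas you quote \eqref{EQgammabound}; both are routine and both need $L\ge|e|$ and $L\le L_0$, which you correctly record. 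In short: no gap, and your variant would let the exponential factor in the statement be dropped.
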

\begin{proof}
We calculate
\begin{equation}
\label{EQlminfty1a}
\frac{d}{dt}\int_\gamma |\gamma|^2\,ds
 = -2\int_\gamma \IP{\gamma}{k_{ss}\nu}\,ds
   + \int_\gamma |\gamma|^2kk_{ss}\,ds\,.
\end{equation}
First, Lemma \ref{LMkss} allows us to uniformly estimate
\begin{equation}
\label{EQlminfty2a}
-2\int_\gamma \IP{\gamma}{k_{ss}\nu}\,ds
 \le \int_\gamma k^2_{ss}\,ds + \int_\gamma |\gamma|^2\,ds
 \le C + \int_\gamma |\gamma|^2\,ds\,.
\end{equation}
Now let us deal with the second term.
Integrating by parts and estimating, we find
\begin{align*}
   \int_\gamma |\gamma|^2kk_{ss}\,ds
 &= -\int_\gamma k_s^2|\gamma|^2\,ds - 2\int_\gamma kk_s\IP{\gamma}{\tau}\,ds
\\
 &\le \int_\gamma k^2\,ds = L^{-1}\Ko\,.
\end{align*}
Combining this with \eqref{EQlminfty1a}, \eqref{EQlminfty2a}, we find
\[
\frac{d}{dt}\int_\gamma |\gamma|^2\,ds
 \le C + \int_\gamma |\gamma|^2\,ds
\]
which implies
\[
\bigg( e^{-t}\int_\gamma |\gamma|^2\,ds \bigg)' \le Ce^{-t}\,. 
\]
Integrating from zero to $t$ yields the estimate
\begin{equation}
\label{EQcomb1}
\int_\gamma |\gamma|^2\,ds
 \le (C+\vn{\gamma}_2^2(0))e^t
\,.
\end{equation}
Set $\gamma_i = \IP{\gamma}{e_i}$.
To conclude the $L^\infty$ estimate we use Corollary 2.9 to find
\begin{equation}
\label{EQcomb2}
\vn{\gamma_i - \gammaiav}_\infty^2
 \le \frac{2L}{\pi}\int_\gamma |\IP{\tau}{e_i}|^2ds
 \le \frac{2L^2}{\pi}\,.
\end{equation}
Since $\gamma_i = \gamma_i - \gammaiav + \gammaiav$ we conclude from the above that
\begin{equation}
\label{EQcomb3}
\vn{\gamma_i}_\infty
= \vn{\gamma_i - \gammaiav + \gammaiav}_\infty
\le \frac{L\sqrt2}{\sqrt\pi} + \vn{\gammaiav}_\infty
= \frac{L\sqrt2}{\sqrt\pi} + \gammaiav
\,.
\end{equation}
For the term $\gammaiav$, we use the H\"older inequality, the $L^2$ estimate \eqref{EQcomb1} and the length bound from below to find
\begin{equation}
\label{EQcomb4}
\gammaiav^2 =  \frac1{L^2}\bigg(\int_\gamma |\IP{\gamma}{e_i}|\,ds\bigg)^2
           \le \frac1{L}\int_\gamma |\gamma|^2\,ds
 	   \le \frac1L(C+\vn{\gamma}_2^2(0))e^t
 	   \le \frac1{|e|}(C+\vn{\gamma}_2^2(0))e^t
\,.
\end{equation}
Here we also use the length bound from below.
Since $|\gamma|^2 = \sum_{i=1}^2 \gamma_i^2$, combining \eqref{EQcomb1}--\eqref{EQcomb4} for $i=1$ and $i=2$ yields \eqref{EQlinftygamma}.
\end{proof}

Now we contradict finite maximal time.

\begin{cor}
Let $\gamma:(-1,1)\times[0,T)\rightarrow\R^2$ be a solution to (CD) given by Theorem \ref{TMste}
satisfying the assumptions of Lemma \ref{LMkss}.
Then $T=\infty$.
\label{LTE}
\end{cor}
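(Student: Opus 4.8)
The plan is a direct contradiction argument against the blowup criterion of Theorem \ref{TMste}: if $T < \infty$, then no constant $C$ can satisfy $\vn{\gamma}_\infty + \vn{k_s}_2 \le C$ for all $t \in [0,T)$, so it is enough to exhibit such a bound under the standing hypotheses together with the assumption $T<\infty$.

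First I would dispose of the position vector. Lemma \ref{LMgammainfty} already gives $\vn{\gamma}_\infty^2 \le C(1+t)e^t$, and on the finite interval $[0,T)$ this is bounded by the finite constant $C(1+T)e^T$. Thus $\vn{\gamma}_\infty$ is uniformly bounded on $[0,T)$ as soon as $T<\infty$, and no further work is needed for this term.

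Next I would control $\vn{k_s}_2$ using the a-priori estimate on the velocity. Lemma \ref{LMevol2} gives $L'(\gamma(\cdot,t)) = -\vn{k_s}_2^2 \le 0$, so $L(\gamma_t)\le L(\gamma_0) =: L_0$ for all $t$, while the parallel-line constraint forces $L(\gamma_t)\ge|e|>0$. The no-flux boundary condition $k_s(\pm1,t)=0$ permits Lemma \ref{WI} to be applied with $f = k_s$, yielding $\vn{k_s}_2^2 \le \frac{L^2}{\pi^2}\vn{k_{ss}}_2^2 \le \frac{L_0^2}{\pi^2}\vn{k_{ss}}_2^2$. By Lemma \ref{LMkss}, $\vn{k_{ss}}_2^2 \le C_0$ uniformly on $[0,T)$ (in fact with exponential decay), hence $\vn{k_s}_2^2 \le L_0^2 C_0/\pi^2$ uniformly.

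Putting the two bounds together produces a finite constant $C$, depending only on the initial data and on $T$, with $\vn{\gamma}_\infty + \vn{k_s}_2 \le C$ on $[0,T)$, contradicting Theorem \ref{TMste}; therefore $T=\infty$. There is no genuine obstacle at this stage — all the analytic content lives in Lemmata \ref{LMkss} and \ref{LMgammainfty} — and the only point needing a moment's attention is the two-sided bound on $L$, whose upper half comes from monotonicity of length under (CD) and whose lower half comes from the geometry of the parallel supporting lines; this is precisely what converts the a-priori bound on $\vn{k_{ss}}_2$ into one on $\vn{k_s}_2$.
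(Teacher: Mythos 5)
Your proposal is correct and follows essentially the same route as the paper: contradict the extension criterion of Theorem \ref{TMste} by bounding $\vn{\gamma}_\infty$ via Lemma \ref{LMgammainfty} on the finite interval $[0,T)$ and bounding $\vn{k_s}_2$ via the Poincar\'e inequality $\vn{k_s}_2^2\le \frac{L^2}{\pi^2}\vn{k_{ss}}_2^2$ together with Lemma \ref{LMkss}. Your extra remarks (monotonicity of length for the upper bound on $L$, and the lower bound $L\ge|e|$, which is not actually needed here) only make explicit what the paper leaves implicit.
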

\begin{proof}
Suppose $T<\infty$. Then by Theorem \ref{TMste} there does not exist a $D\in(0,\infty)$ such that
\begin{equation}
\label{EQlmlte}
\vn{\gamma}_\infty(t) + \vn{k_s}_2(t) \le D\,.
\end{equation}
for all $t\in[0,T)$.
Lemma \ref{LMkss} and Lemma 2.8 implies
\[
\vn{k_s}_2^2 \le \frac{L^2}{\pi^2}\vn{k_{ss}}_2^2 \le C_1\,.
\]
This deals with the second term.
Lemma \ref{LMgammainfty} implies
\[
\vn{\gamma}_\infty < C_2
\]
where $C_2 = C + Ce^T$.
These estimates imply that \eqref{EQlmlte} holds for $D = C_1+C_2$, yielding the desired contradiction.
Therefore $T=\infty$.
\end{proof}

Now that we have global existence, the exponential decay of Lemma
\ref{LMkss} yields pointwise decay of both $\vn{k}_\infty$ and
$\vn{k_s}_\infty$.

\begin{cor}\label{CoC}
Let $\gamma:(-1,1)\times[0,\infty)\rightarrow\R^2$ be a solution to (CD) given by Theorem \ref{TMste}
satisfying the assumptions of Lemma \ref{LMkss}.
Then $\vn{k}_\infty(t)\rightarrow0$ as $t\rightarrow\infty$ with estimate
\[
	\vn{k}_\infty(t) + \vn{k_s}_\infty \le Ce^{-\frac{\delta_0}2 t}
\]
where $C$ is a constant depending only on the initial data $\gamma_0$.
\end{cor}
\begin{proof}
The exponential decay of $||k_{ss}||_2^2(t)$ from Lemma \ref{LMkss} combined with Corollary 2.9 and Lemma 2.8 implies
\[
	\vn{k}_\infty^2(t) \le \frac{2L}{\pi}\vn{k_s}_2^2(t) \le \frac{2L^3}{\pi^3}\vn{k_{ss}}_2^2(t) \le Ce^{-\delta_0t}
\]
and
\[
	\vn{k_s}_\infty^2(t) \le \frac{L}{\pi}\vn{k_{ss}}_2^2(t) \le Ce^{-\delta_0t}
\]
where we also used the length bound in both estimates. This implies the result.
\end{proof}

With this strong decay in hand, we are able to bound the position vector
uniformly in time and conclude a weak convergence result for the flow.
Note that strictly speaking we do not need this result to prove convergence for
(CD), as we can proceed from the above directly to exponential decay for all
derivatives of curvature (see Corollary 3.19).
However we have included the below direct result as it uses a technique that
may be of interest to the reader and may be applicable to other future
problems.

\begin{prop}
\label{PNnew}
Let $\gamma:(-1,1)\times[0,\infty)\rightarrow\R^2$ be a solution to (CD) given by Theorem \ref{TMste}
satisfying the assumptions of Lemma \ref{LMkss}.
Then there exists a straight line segment $l$ with the following property.
The position vector $\gamma$ is eventually graphical over $l$ with graph function $u$ converging to zero along a subsequence in 
$C^2((-|e|/2,|e|/2))$.
\end{prop}
\begin{proof}
We use Arzel\`a-Ascoli to extra a limit, but for this we need to bound the
position vector $|\gamma|$.
Since $k(x,t)\rightarrow0$ as $t\rightarrow\infty$, there exists a $T_*$ such
that $\gamma:(-1,1)\times(T_*,\infty)\rightarrow\R^2$ admits a graph
representation via a function
$u:(-|e|/2,|e|/2)\times(T_*,\infty)\rightarrow\R^2$, so $\gamma(x,t) = (\phi(x,t),
u(\phi(x,t),t))$, where $\phi$ is a diffeomorphism (reparametrisation).
To see this, let $\theta(s,t)$ be the angle that the tangent vector at $(s,t)$ makes with the vector $e$.
Corollary 2.9 gives the estimate
\[
|\theta(s,t)|^2 \le \frac{L}{\pi}\int_\gamma \theta_s^2\,ds \le \frac{L^2}{\pi}\vn{k}_\infty^2(t)
\]
and so the angle that the tangent vector makes with $e$ is uniformly small (recall again, length is bounded).
In particular, we may choose $T_*$ such that for all $t>T_*$ we have $\vn{k}_\infty^2(t) < \frac{\pi^2}{4L_0^2}$.
Then we have for all $t>T_*$ that $|\theta(s,t)| < \frac\pi2$. This means that
the tangent vector can never be perpendicular to $e$, i.e., the flow is
uniformly graphical for $t>T_*$. This argument with the angle also implies that
the spatial gradient of the graph function $u$ is uniformly bounded and
approaching zero.
Furthermore, using $y$ to denote the spatial variable of $u$, curvature is
given by $u_{yy}/\sqrt{1+u_y^2}^3$.
Therefore the second derivative of $u$ must also approach zero.

Our control so far does not give a $t$-independent estimate on
$u$ itself (equivalent to controlling $|\gamma|$).
To fix this, we proceed as follows.
Set $e^\perp$ to be any unit normal vector to $e$ (there are two possibilities).
Calculate (keep in mind Lemma 2.6)
\begin{align}
\frac{d}{dt}\int_\gamma \IP{\gamma}{e^\perp}^2\,ds
 &= -2\int_\gamma k_{ss}\IP{\nu}{e^\perp}\IP{\gamma}{e^\perp}\,ds
	+ \int_\gamma kk_{ss}\IP{\gamma}{e^\perp}^2\,ds
\notag\\
 &=     - 4\int_\gamma kk_{s}\IP{\tau}{e^\perp}\IP{\gamma}{e^\perp}\,ds
        + 2\int_\gamma  k_{s}\IP{\nu}{e^\perp}\IP{\tau}{e^\perp}\,ds
\notag\\
	&\qquad - \int_\gamma k_{s}^2\IP{\gamma}{e^\perp}^2\,ds
\notag\\
 &=     - 4\int_\gamma kk_{s}\IP{\tau}{e^\perp}\IP{\gamma}{e^\perp}\,ds
        + 2\int_\gamma  k^2 (\IP{\tau}{e^\perp}^2 - \IP{\nu}{e^\perp}^2)\,ds
\notag\\
	&\qquad - \int_\gamma k_{s}^2\IP{\gamma}{e^\perp}^2\,ds
\notag
\,.
\end{align}
Now we use the Cauchy inequality on the first term to estimate
\[
\frac{d}{dt}\int_\gamma \IP{\gamma}{e^\perp}^2\,ds
\le  4\int_\gamma k^2\IP{\tau}{e^\perp}^2\,ds
        + 2\int_\gamma  k^2 (\IP{\tau}{e^\perp}^2 - \IP{\nu}{e^\perp}^2)\,ds
\,.
\]
Then we use the obvious estimate on the unit vectors to see that
\[
\frac{d}{dt}\int_\gamma \IP{\gamma}{e^\perp}^2\,ds
\le  6\int_\gamma k^2\,ds
\,.
\]
Now the exponential decay estimate can be integrated (Corollary 3.16) to find
\[
\int_\gamma \IP{\gamma}{e^\perp}^2\,ds\bigg|_t
	\le \int_\gamma \IP{\gamma}{e^\perp}^2\,ds\bigg|_{t=0} + C\,.
\]
From this estimate it will follow that the position vector is bounded independent of $t$.
First, the eventual uniform graphicality implies that we only need to bound the height $|\IP{\gamma}{e^\perp}|$.
Second, we observe that (Corollary 2.9)
\begin{align*}
\vn{\IP{\gamma}{e^\perp}}_\infty &= 
\vn{\IP{\gamma}{e^\perp} - \overline{\IP{\gamma}{e^\perp}} + \overline{\IP{\gamma}{e^\perp} }}_\infty
\le
\vn{\IP{\gamma}{e^\perp} - \overline{\IP{\gamma}{e^\perp} }}_\infty
 + \vn{\overline{\IP{\gamma}{e^\perp} }}_\infty
\\
&\le \sqrt{\frac{2L}{\pi}} \bigg(\int_\gamma \IP{\tau}{e^\perp}\,ds\bigg)^{\frac12}
	+ \frac1L\int_\gamma \IP{\gamma}{e^\perp}\,ds
\\
&\le C + C\bigg( \int_\gamma \IP{\gamma}{e^\perp}^2\,ds\bigg|_{t=0} + C\bigg)^{\frac12}
\le C
\end{align*}
where here $C$ varies from line to line but is always a universal constant, and
again we note that length is uniformly bounded. We also used the H\"older
inequality, and the triangle inequality.

Thus we find a uniform a-priori estimate on the magnitude of the graph function
$|u(y,t)|$ and the position vector $|\gamma|$ (by projection onto
$\{e,e^\perp\}$ and using eventual graphicality).
Therefore by Arzel\'a-Ascoli a straight line segment $l$ exists to which
$\gamma(\cdot,t)$ is subconvergent, and in particular the graph function $u$
subconverges to a constant $c$ in $C^2((-|e|/2,|e|/2))$.
By replacing $u$ with $u(y,t) - c$, we have that $l$ is the zero level set of
$u$, and $u$ subconverges to zero.  This finishes the proof.
\end{proof}

Global existence and the exponential decay from Corollary \ref{CoC} (actually we only need the boundedness of $\vn{k}_2^2$ that follows already from Lemma \ref{KoESTcaseb})
imply the following uniform estimates for all derivatives of curvature, a (CD) analogue of Lemma \ref{LMhigher1}.

\begin{lem}
Let $\gamma:(-1,1)\times[0,\infty)\rightarrow\R^2$ be a solution to (CD) given by Theorem \ref{TMste}
satisfying the assumptions of Lemma \ref{LMkss}.
For each non-negative integer $l$, there exists absolute constants $d_l$ such that
\[
	\vn{k_{s^l}}_2^2 \le d_l
\,.
\]
\label{LMhighercdf}
\end{lem}
\begin{proof}
This is the (CD) flow analogue to Lemma \ref{LMhigher1} earlier, and so we will be brief.
Lemma \ref{LMevo2} gives the evolution of $k_{s^l}$, and then integration by
parts (with Lemma \ref{LMbdy} to eliminate boundary terms) gives
\begin{align*}
	\frac{d}{dt}\int_\gamma k_{s^l}^2\,ds
	&=
	- 2\int_\gamma k_{s^{(l+2)}}^2\,ds
	+ \sum_{q+r+u=l} c_{qru}\int_\gamma k_{s^l}k_{s^{(q+2)}}k_{s^r}k_{s^u} \,ds
	\\&\qquad
 	+ \sum_{q+r+u+v+w=l} c_{qruvw} \int_\gamma k_{s^l}k_{s^q}k_{s^r}k_{s^u}k_{s^v}k_{s^w}\,ds
	\,.
\end{align*}
Using now \cite[Proposition 2.5]{DKS02} (c.f. \cite[Appendix C]{dall2014}) to interpolate, we find
\[
	\frac{d}{dt}\int_\gamma k_{s^l}^2\,ds
	+ \int_\gamma k_{s^{(l+2)}}^2\,ds
	\le Ce^{-\delta_0t} \le C
\]
where $C$ depends on the upper and lower bounds for length, as well as the bound for $\vn{k}_2^2$.
Using Lemma 2.7 and Lemma 2.8, as well as the length bound again, this implies the differential inequality
\[
	\frac{d}{dt}\int_\gamma k_{s^l}^2\,ds
	+ \hat{c}\int_\gamma k_{s^{(l)}}^2\,ds
	\le \hat{C}
\,.
\]
This gives that $\int_\gamma k_{s^l}^2\,ds$ is uniformly bounded, as required.
\end{proof}

Uniform boundedness, exponential decay of $\vn{k}_\infty$, and global existence now gives exponential decay of $\vn{k_{s^m}}_\infty$ for all $m\ge0$, 
This kind of argument is similar to \cite[Lemma 2.8]{GWconvcdf}.
As this is very much the same as our earlier result Lemma 3.10 for (FE), we omit the proof.

\begin{cor}
Let $\gamma:(-1,1)\times[0,\infty)\rightarrow\R^2$ be a solution to (CD) given by Theorem \ref{TMste}
satisfying the assumptions of Lemma \ref{LMkss}.
For each non-negative integer $l$, there exists an absolute constant $d_l$ such that
\[
	\vn{k_{s^l}}_\infty(t) \le d_le^{-\frac{\delta_0}{4}t}
\,.
\]
\label{CYalldecayexpcdf}
\end{cor}

Now that we have exponential decay of all norms of curvature, we conclude exponential convergence of the flow in $C^\infty$.
As this is very similar to the case of the flow (FE) earlier, we again omit the proof.

\begin{thm}[Theorem \ref{TMmainb}]
Suppose $|e|>0$.
Let $\gamma:(-1,1)\times[0,T)\rightarrow\R^2$ be a solution to (CD).
Suppose $\gamma_0$ satisfies \eqref{EQhyp}.
Then $\omega = 0$, the flow exists globally $T=\infty$, and $\gamma(\cdot,t)$
converges exponentially fast to a straight line segment parallel to $e$ in the
$C^\infty$ topology.
\end{thm}

\bibliographystyle{plain}
\bibliography{ParallelPlanar}

\end{document}